\theoremstyle{definition}
\newtheorem{assumption}{Assumption}
\def\email#1{\it #1\par}
\providecommand{\otherindexspace}[1]{}
\newtheorem{theorem}{Theorem}[section]
\newtheorem{lemma}[theorem]{Lemma}
\newtheorem{proposition}[theorem]{Proposition}
\newtheorem{remark}[theorem]{Remark}
\newtheorem{definition}[theorem]{Definition}
\newtheorem{corollary}[theorem]{Corollary}
\DeclareMathOperator*{\argmax}{arg\,max}
\definecolor{falured}{rgb}{0.5, 0.09, 0.09}
\numberwithin{equation}{section}
\title{Linear programming fictitious play algorithm for mean field games with optimal stopping and absorption\footnote{Peter Tankov gratefully acknowledges financial support from the ANR (project EcoREES ANR-19-CE05-0042) and from the FIME Research Initiative. }}
\author{ Roxana Dumitrescu \thanks{Department of Mathematics, King's College London, Strand, London, WC2R 2LS, United Kingdom, Email: \email roxana.dumitrescu@kcl.ac.uk} \and Marcos Leutscher  \thanks{CREST, ENSAE, Institut Polytechnique de Paris, 5 avenue Henry Le Chatelier, 91120 Palaiseau, France, Email: \email marcos.leutscherdelasnieves@ensae.fr} \and Peter Tankov \thanks{CREST, ENSAE, Institut Polytechnique de Paris, 5 avenue Henry Le Chatelier, 91120 Palaiseau, France, Email: \email peter.tankov@ensae.fr}}
\date{}
\begin{document}
\maketitle

\begin{abstract}
{
We develop the \textit{fictitious play} algorithm in the context of the {linear programming approach} for mean field games of {optimal stopping} and mean field games with {regular control and absorption}. This algorithm allows to approximate the mean field game population dynamics without computing the value function by solving linear programming problems associated with the distributions of the players still in the game and their stopping times/controls. We show the convergence of the algorithm using the topology of convergence in measure in the space of subprobability measures, which is needed to deal with the lack of continuity of the flows of measures. Numerical examples are provided to illustrate the convergence of the algorithm.
}

\end{abstract}

\textbf{Key words}: Mean-field games, optimal stopping,  continuous control, absorption, infinite-dimensional linear programming, fictitious play \vspace{10pt}

\textbf{AMS: 91A55, 91A13, 60G40}

\section{Introduction}

The goal of this paper is to develop a numerical algorithm for computing Nash equilibria in Mean-field games of optimal stopping and Mean-field games with regular control and absorption.

Mean-Field Games (MFGs) are useful for approximating $N$-player Nash equilibria, which are rarely tractable. They have been introduced at about the same time by Lasry and Lions \cite{lasry2006a, lasry2006b, lasry2007} and Huang, Malhamé and Caines \cite{huang2006} as limit version of games with a large number of agents, symmetric interactions and negligible individual influence of each player on the others.  In the literature, several approaches have been developed to prove existence of an MFG Nash equilibrium. The analytic approach, introduced by Lasry and Lions and Huang, Malhamé and Caines, boils down to solving a coupled system of nonlinear partial differential equations: a Hamilton-Jacobi-Bellman equation (backward in time) satisfied by the value function of the representative agent and a Fokker-Planck-Kolmogorov equation (forward in time) describing the evolution of the density of agents when the optimal control is used. The probabilistic approach, introduced by Carmona and Delarue, based on the stochastic maximum principle, reduces the problem to a system of coupled forward-backward stochastic differential equations of McKean-Vlasov type.  Finally, the compactification methods consist in relaxing the optimization problems, and often allow to prove existence under weaker assumptions than the first two approaches. We refer here to the controlled martingale problem approach, introduced in the MFG setting by Lacker \cite{lacker2015} and the linear programming approach, developed for MFGs of optimal stopping in \cite{bdt2020} and extended to a more general framework in \cite{dlt2021}. In the single-agent case the linear programming approach is described in various papers, e.g. \cite{stockbridge1998, stockbridge2002, stockbridge2017}. A similar approach has been introduced in the works \cite{lewis1980} (deterministic case) and \cite{fleming1988} (stochastic case) and in the context of \textit{Aubry-Mather theory} in the deterministic and stochastic cases (see e.g. \cite{mather1991, mane1996}, \cite{gomes2010} and \cite{gomes2002}). In the  mean-field game setting, Aubry-Mather theory has been applied in the recent paper \cite{gomes2021} for studying a price formation MFG model.

Fast numerical methods for computing Nash equilibria are very important for applications. Several algorithms exist in the literature for the case of regular control without absorption. These use either analytic or probabilistic approach and iterate over the {value function} and the {mass distribution} (see e.g.\cite{achdou2010, Achdou2020, Chassagneux2019, angiuli2019}). Another method, based on the fictitious play algorithm which goes back to Brown \cite{brown1951} in the classical setting of game theory, has been introduced in the Mean-field framework in \cite{cardaliaguet2017b}. The fictitious play algorithm is a {learning procedure}, very natural in this setting. Due to the complexity of the game, it is unrealistic to assume that the agents can actually compute the equilibrium configuration. Such a configuration may only arise if the players {learn} how to play the game. The fictitious play for MFGs has been applied in the settings of potential MFGs with regular controls (\cite{cardaliaguet2017b}) and first order MFGs with regular controls (\cite{hadikhanloo2017, hadikhanloo2019, Elie2020}). A continuous time version of the fictitious play has been studied in \cite{perrin2020} for finite MFGs with common noise.

The compactification methods based on the {controlled martingale problem approach}, although they simplify the proofs of existence, are very abstract and cannot be exploited for the development of numerical algorithms. On the contrary, the {linear programming formulation} seems appropriate to build numerical schemes, see e.g. \cite{Mendiondo1998, stockbridge2002}. 

In this paper we develop and study the \textit{linear programming fictitious play algorithm} (LPFP) for {mean-field games of optimal stopping} and {regular control with absorption}, in the case of  second-order possibly non-potential games, under general assumptions on the coefficients and a non-strict monotonicity condition on the reward function with respect to the measure. 

The LPFP  algorithm  starts with an initial guess of the equilibrium and iterates the following  two steps:
\begin{enumerate}[(i)]
\item Compute the best response corresponding to the guess by solving a linear program. 
\item Update the guess via a convex combination (with well chosen parameters) of the previous guess and  the best response computed in Step (i). 
\end{enumerate}
While the linear program is infinite dimensional, one can approximate it by a finite dimensional linear program (see e.g. \cite{Mendiondo1998, stockbridge2002}), for which fast and accurate algorithms exist in most computing environments. We provide several numerical examples, which illustrate the convergence of the algorithm.

We emphasize that unlike other algorithms which iterate  over both, the {value function} and the {distribution of agents}, the LPFP algorithm iterates only over the distribution of agents, which is the main object of interest in the mean-field game setting. 

Very few papers present numerical algorithms for Mean-field games of optimal stopping and discuss their convergence. \cite{bdt2020} prove the convergence of fictitious play for potential games and \cite{bertucci2020} studies the Uzawa algorithm for the (possibly non-potential) MFG system introduced in \cite{bertucci2017} in the stationary case, under the assumption of strict monotonicity of the reward map. This LPFP algorithm has already been used for applications to water management and electricity markets in \cite{bdt2022} and \cite{adt2021}, respectively, but no theoretical convergence results have been provided. In the recent paper \cite{dianetti2022}, the authors study the class of submodular MFGs. In particular, in the case of optimal stopping, they prove the existence of equilibria using Tarski's fixed point theorem and provide an approximation of the minimal equilibria (with respect to a specific order structure) by starting with the minimal measure flow and iterating the minimal best response. The results hold under the opposite inequality to the Lasry-Lions monotonicity condition used in this paper. The theory on mean-field games with regular control and absorption has received a lot of interest recently, being developed in \cite{campi2018, campi2021, burzoni2021}, and using the linear programming approach in \cite{dlt2021}. To the best of our knowledge, no algorithm has been proposed so far in this setting and one of our goals is to fill this gap.

One of the principal difficulties of MFGs with stopping/absorption is that the standard weak convergence topology for measures cannot be used due to the possible lack of regularity of the flow of measures. In this paper, we solve this problem and provide general convergence results for LPFP, by finding an appropriate topology, i.e. the topology of the convergence in measure in infinite-dimensional spaces and providing appropriate estimates using well chosen metrics.
Furthermore, the use of this topology allows us to prove the existence of an equilibria in a much more general framework compared to \cite{bdt2020} and \cite{dlt2021}.

The paper is organized as follows. In Section 2, we introduce the LPFP algorithm for MFGs of optimal stopping. We study the compactness under the convergence in measure topology of the set of admissible measures and provide several key estimates, which are used to prove the convergence of the algorithm. Numerical illustrations are provided. In Section 3, we propose an LPFP algorithm in the case of regular control with absorption, and show its convergence by using the tools developed in Section 2. In the Appendix we give some technical results and some examples of sufficient conditions under which the main results hold.

\vspace{10mm}

\paragraph{Notation.} For a topological space $(E, \tau)$ we denote by $\mathcal{B}(E)$ the Borel $\sigma$-algebra, by $\mathcal{M}^s(E)$ the set of Borel finite signed measures on $E$, by $\mathcal{M}(E)$ the set of Borel finite positive measures on $E$, by $\mathcal{P}^{sub}(E)$ the set of Borel subprobability measures on $E$ and by $\mathcal{P}(E)$ the set of Borel probability measures on $E$. We denote by $M(E)$ the set of Borel measurable functions from $E$ to $\mathbb R$, by $M_b(E)$ the subset of Borel measurable and bounded functions, by $C(E)$ the subset of continuous functions, and by $C_b(E)$ the subset of continuous and bounded functions. The set $M_b(E)$ is endowed with the supremum norm $\|\varphi\|_\infty=\sup_{x\in E}|\varphi(x)|$. If $(E, d)$ is a metric space and $p\geq 1$, we denote by $\mathcal{M}^s_p(E)$ (respectively $\mathcal{M}_p(E)$, $\mathcal{P}_p^{sub}(E)$ and $\mathcal{P}_p(E)$) the set of $\mu\in \mathcal{M}^s(E)$ (respectively $\mathcal{M}(E)$, $\mathcal{P}^{sub}(E)$ and $\mathcal{P}(E)$) such that there exists a point $x_0\in E$ so that $\int_E d(x, x_0)^p|\mu|(dx)<\infty$, where $|\mu|$ is the total variation measure of $\mu$.

Let $T>0$ be a terminal time horizon, $\mathcal{O}$ be an open subset of $\mathbb R$ with closure $\bar{\mathcal{O}}$ and $A$ be a compact subset of $\mathbb{R}$. We denote by $C_b^{1, 2}([0, T]\times \bar{\mathcal{O}})$ the set of functions $u\in C_b([0, T]\times \bar{\mathcal{O}})$ such that $\partial_t u, \partial_x u, \partial_{xx}u \in C_b([0, T]\times \bar{\mathcal{O}})$. We denote by $\mathbb R_+$ the set $[0, +\infty[$. For a given process $(Y_t)_t$ and a Borel subset $B$ of $\mathbb R$, we define the random time $\tau_{B}^Y(\omega):=\inf\{t\geq 0: Y_t(\omega)\notin B\}$, with the convention $\inf\emptyset =\infty$. 

Since we will deal with different topologies and distances throughout the paper, we list them here,  making reference to the places where they are introduced: the topologies $\tau_0$ (weak convergence), $\tau_p$ (weak convergence with $p$-growth), $\bar \tau$ (stable convergence), $\bar \tau_p$ (stable convergence with $p$-growth) and the metric $d_{\text{BL}}$ are introduced in Appendix \ref{app tau p}. The topology $\tilde\tau_p$ of the convergence in measure for flows of subprobabilities in $\mathcal{P}_p^{sub}$ is defined just before Assumption \ref{assump existence OS}. The metric $W_1'$ on $\mathcal{P}_1^{sub}$ is defined just before Proposition \ref{distance tau_1} and the metrics $d_M$ and $\rho$ are defined in Proposition \ref{distance tau_1}.

\section{Linear programming algorithm for \textit{Optimal stopping MFGs}}\label{sec OS}

\subsection{Preliminaries and main result}

We describe here the LPFP algorithm for MFGs of optimal stopping, i.e. when players choose the time to exit the game. We present the definition of LP (Linear Programming) MFG Nash equilibrium in this setting and prove the convergence of the LPFP algorithm to the LP MFG Nash equilibrium.

\paragraph{Preliminaries.} Let $U$ be the set of flows of measures on $\bar{\mathcal{O}}$, $\left(m_{t}\right)_{t\in [0, T]}$, such that: for every $t \in[0, T]$, $m_{t}$ is a Borel finite signed measure on $\bar{\mathcal{O}}$, for every $B \in \mathcal{B}(\bar{\mathcal{O}})$, the mapping $t \mapsto m_{t}(B)$ is measurable, and $\int_{0}^{T} |m_{t}|(\bar{\mathcal{O}}) dt < \infty$, where $|m_{t}|$ is the total variation measure of $m_{t}$.

We define $\tilde U$ as the quotient space given by $U$ and the almost everywhere equivalence relation on $[0, T]$, that is, if $dt$-almost everywhere on $[0,T]$ the measures $m_{t}^1$ and $m_{t}^2$ coincide, the measure flows $(m^1_t)_{t\in [0, T]}$ and $(m^2_t)_{t\in [0, T]}$ are considered equivalent. $\tilde U$ endowed with the usual sum and scalar multiplication is a vector space, where the zero vector is given by the family of null measures $(\mathbf 0)_{t\in [0, T]}$. To each $(m_t)_{t\in [0, T]}\in \tilde U$ we associate a Borel finite signed measure on $[0, T]\times\bar{\mathcal{O}}$ defined by $m_t(dx)dt$ and we endow $\tilde U$ with the topology of weak convergence of the associated measures. For $p\geq 1$, define the subsets of $\tilde U$,
$$\tilde U_p:=\left\{m\in \tilde U: \int_0^T\int_{\bar{\mathcal{O}}}|x|^p|m|_t(dx)dt<\infty \right\},$$
endowed with the weak topology with respect to continuous functions with $p$-growth, denoted by $\tau_p$, of the associated measures (see Appendix \ref{app tau p}).
We denote by $V$ (resp. $V_p$) the set of measure flows $(m_t)_{t\in [0, T]}\in \tilde U$ (resp. $\tilde U_p$) such that $dt$-a.e. $m_t$ is a subprobability measure. We make the convention that when integrating a quantity with respect to $dt$, the version taken for $(m_t)_{t\in [0, T]}\in V$ inside the integral is such that, for each $t\in [0, T]$, $m_t$ is a subprobability measure. We note that $\tilde U$ is a Hausdorff locally convex topological vector space and $V_p$ is metrizable. We endow the set $\mathcal{P}_p([0, T]\times \bar{\mathcal{O}})$ with the topology $\tau_p$ and we will often work on the product space $\mathcal{P}_p([0, T]\times \bar{\mathcal{O}})\times V_p$ endowed with the product topology, which we will denote $\tau_p\otimes \tau_p$. Since this product space is metrizable, we will often work with sequences.  Finally, consider the set $M_p:=M([0, T];\mathcal{P}_p^{sub}(\bar{\mathcal{O}}))$ of Borel measurable functions from $[0, T]$ to $\mathcal{P}_p^{sub}(\bar{\mathcal{O}})$ identified a.e. on $[0, T]$. This set is endowed with the topology of convergence in measure (see Appendix \ref{app conv m}) which is denoted by $\tilde \tau_p$. Moreover, any $m\in V_p$ admits a representative in $M_p$. We can thus consider,  without loss of generality, the topology $\tilde \tau_p$ in $V_p$.\vspace{10pt} 

\noindent We are given constants $q> p\geq 1\vee r$, where $r\in [0, 2]$ and $q\geq 2$, and the following functions:
$$(b, \sigma):[0, T]\times \mathbb R\rightarrow \mathbb R,\quad f:[0, T]\times \bar{\mathcal{O}}\times \mathcal{P}_p^{sub}(\bar{\mathcal{O}})\rightarrow \mathbb R,\quad g:[0, T]\times \bar{\mathcal{O}}\times \mathcal{P}_p([0, T]\times \bar{\mathcal{O}})\rightarrow \mathbb R.$$
The sets $[0, T]$, $\mathbb R$ and $\bar{\mathcal{O}}$ are endowed with the usual topology and the sets $\mathcal{P}_p^{sub}(\bar{\mathcal{O}})$ and $\mathcal{P}_p([0, T]\times \bar{\mathcal{O}})$ are endowed with the topology $\tau_p$. Throughout the paper, we will adopt the bilinear form notation 
$$\langle f(m), m' \rangle:=\int_0^T\int_{\bar{ \mathcal{O}}}f(t, x, m_t)m_t'(dx)dt,\quad \langle g(\mu), \mu' \rangle:=\int_{[0, T]\times \bar{\mathcal{O}}}g(t, x, \mu)\mu'(dt, dx),
$$
where $(\mu, m), (\mu', m')\in \mathcal{P}_p([0, T]\times \bar{\mathcal{O}})\times V_p$.

In this section, we let the following assumptions hold true.

\begin{assumption}\label{assump existence OS}\leavevmode
\begin{enumerate}[(1)]
\item $m_0^*\in \mathcal{P}_q(\bar{\mathcal{O}})$.

\item The functions $(t, x)\mapsto b(t, x)$ and $(t, x)\mapsto \sigma(t, x)$ are jointly measurable and continuous in $x$ for each $t$. Moreover, there exists a constant $c_1>0$ such that for all $(t, x, y)\in [0, T]\times \mathbb R\times \mathbb R$,
$$|b(t, x)-b(t, y)|+|\sigma(t, x)-\sigma(t, y)|\leq c_1 |x-y|,\quad |b(t, x)|\leq c_1\left[1+|x|\right],\quad \sigma^2(t, x)\leq c_1\left[ 1+|x|^r\right].$$

\item The function $(t, x, m)\mapsto f(t, x, m)$ is jointly measurable and continuous in $(x, m)$ for each $t$. The function $g$ is jointly continuous. Moreover, there exists a constant $c_2>0$ such that for all $(t, x, m, \mu)\in [0, T]\times \bar{\mathcal{O}}\times \mathcal{P}_p^{sub}(\bar{\mathcal{O}})\times \mathcal{P}_p([0, T]\times \bar{\mathcal{O}})$,
$$|f(t, x, m)|\leq c_2\left[ 1+|x|^p + \int_{\bar{\mathcal{O}}}|z|^pm(dz) \right],\quad |g(t, x, \mu)|\leq c_2\left[ 1+|x|^p + \int_{[0, T]\times \bar{\mathcal{O}}}|z|^p\mu(ds, dz)\right].$$

\item One of the following statements is true:
\begin{enumerate}
\item \textit{Unattainable boundary}: $b$, $\sigma$ and $\mathcal{O}$ are such that, $\mathbb P \left(\tau_\mathcal{O}^{X}\geq T \right)=1$ where $X$ is the unique strong solution of
$$dX_t= b(t, X_t)dt + \sigma(t, X_t)dW_t,\quad \mathbb P \circ X_0^{-1}= m_0^*.$$
\item \textit{Attainable boundary}: $\mathcal{O}$ is a bounded open interval and for all $(t, x)\in [0, T]\times \mathbb R$, $\sigma^2(t, x)\geq c_\sigma$ for some $c_\sigma>0$. 
\end{enumerate}
\end{enumerate}
\end{assumption}

We now give the formulation of the \textit{linear programming Optimal Stopping MFG problem}, where the set of occupation measures induced by stopping times is replaced by the set of measures satisfying an infinite-dimensional linear constraint. This allows to compactify and convexify the optimization problem. As will be explained later in the paper, the flow of subprobability measures $m$ is not necessarily regular in time.  Because of this lack of regularity, MFGs of optimal stopping require a different treatment, and in particular, a different topology than classical MFGs (stochastic control without absorption). For the \textit{strong formulation of Optimal Stopping MFG}, we refer the reader to \cite{dlt2021}.

\paragraph{The relaxed MFG problem: the linear programming formulation.} 

\begin{definition}
Let $\mathcal{R}$ be the set of pairs $(\mu, m) \in \mathcal{P}_p([0, T]\times \bar{\mathcal{O}})\times V_p$, such that for all $u\in C_b^{1, 2}([0, T]\times \bar{\mathcal{O}})$,
\begin{equation*}
\int_{[0, T]\times \bar{\mathcal{O}}} u(t, x)\mu(dt, dx)= \int_\mathcal{O} u(0, x)m_0^*(dx) + \int_0^T \int_{\bar{\mathcal{O}}} \left(\partial_t u +\mathcal L u\right) (t, x)m_t(dx)dt,
\end{equation*}
where $\mathcal L u(t, x):= b(t, x)\partial_x u(t, x) + \frac{\sigma^2}{2}(t, x)\partial_{xx}u(t, x)$.
\end{definition}

\begin{definition}[\textit{LP formulation of the MFG problem}]
For $(\bar \mu, \bar m)\in \mathcal{P}_p([0, T]\times \bar{\mathcal{O}})\times V_p$, let $\Gamma[\bar \mu, \bar m]: \mathcal{P}_p([0, T]\times \bar{\mathcal{O}})\times V_p\rightarrow \mathbb R$ be the reward functional associated to $(\bar \mu, \bar m)$, defined by
$$\Gamma[\bar \mu, \bar m] (\mu, m)= \langle f(\bar m), m\rangle + \langle g(\bar \mu), \mu\rangle.$$
We say that $(\mu^\star, m^\star)\in \mathcal{P}_p([0, T]\times \bar{\mathcal{O}})\times V_p$ is an LP MFG Nash equilibrium if $(\mu^\star, m^\star)\in \mathcal{R}$ and for all $(\mu, m)\in \mathcal{R}$, $\Gamma[\mu^\star, m^\star] (\mu, m)\leq \Gamma[\mu^\star, m^\star] (\mu^\star, m^\star)$.
\end{definition}

For a finite number of iterations, the LPFP algorithm will produce what we call an $\varepsilon$-LP MFG Nash equilibrium. 

\begin{definition}
For a given $\varepsilon\geq 0$, we say that $(\mu^\star, m^\star)\in \mathcal{P}_p([0, T]\times \bar{\mathcal{O}})\times V_p$ is an $\varepsilon$-LP MFG Nash equilibrium if $(\mu^\star, m^\star)\in \mathcal{R}$ and for all $(\mu, m)\in \mathcal{R}$, $\Gamma[\mu^\star, m^\star] (\mu, m) - \varepsilon \leq \Gamma[\mu^\star, m^\star] (\mu^\star, m^\star)$.
\end{definition}

In the next paragraph, we present the main results of the paper, in particular the linear programming fictitious play algorithm and its convergence.

\paragraph{The Linear Programming Fictitious Play algorithm.} 
In order to show the convergence of the algorithm, we impose the following Assumption.
\begin{assumption}\label{main assump OS}
\begin{enumerate}[(1)]
\item For each $(\bar \mu, \bar m) \in \mathcal{R}$, there exists a unique maximizer of 
$\Gamma[\bar \mu, \bar m]$ on $\mathcal{R}$. 
\item The Lasry–Lions monotonicity condition holds: for all $(\mu, m)$ and $(\tilde \mu, \tilde m)$ in $\mathcal{P}_p([0, T]\times \bar{\mathcal{O}})\times V_p$,
\begin{equation*}
\langle f(m) - f(\tilde m),m-\tilde m\rangle 
+ \langle g(\mu)-g(\tilde \mu),\mu-\tilde \mu\rangle \leq 0.
\end{equation*}
\item There exist constants $c_f\geq 0$ and $c_g\geq 0$ such that for all $t\in [0, T]$, $x, x'\in \bar{\mathcal{O}}$, $m, m'\in \mathcal{P}^{sub}_p(\bar{\mathcal{O}})$, $\mu, \mu'\in \mathcal{P}_p([0, T]\times \bar{\mathcal{O}})$, 
$$\left|f(t, x, m) - f(t, x, m')\right|\leq c_f(1+|x|) \int_{\bar{\mathcal{O}}}(1+|z|^p)|m-m'|(dz),$$
$$|f(t, x, m) - f(t, x, m') - f(t, x', m) + f(t, x', m')|\leq c_f |x-x'|\int_{\bar{\mathcal{O}}}(1+|z|^p)|m-m'|(dz),$$ 
$$|g(t, x, \mu) - g(t, x, \mu')|\leq c_g (1+|x|) \int_{[0, T]\times \bar{\mathcal{O}}}(1+|z|^p)|\mu-\mu'|(ds, dz),$$
$$|g(t, x, \mu) - g(t, x, \mu') - g(t', x', \mu) + g(t', x', \mu')|\leq c_g (|t-t'|+|x-x'|)\int_{[0, T]\times \bar{\mathcal{O}}}(1+|z|^p)|\mu-\mu'|(ds, dz).$$
\end{enumerate}
\end{assumption}
\noindent For sufficient conditions on the coefficients under which the above assumptions hold, the reader is referred to Appendix \ref{app suff cond OS}.

\noindent Note that under Assumption \ref{main assump OS},  the function $\Theta:\mathcal{R}\rightarrow \mathcal{R}$ defined by
$$\Theta(\bar \mu, \bar m)=\argmax_{(\mu, m)\in \mathcal{R}}\Gamma[\bar \mu, \bar m](\mu, m), \quad (\bar \mu, \bar m)\in \mathcal{R}$$ is well defined. Furthermore, we can show the uniqueness of the LP MFG Nash equilibrium.

\begin{proposition}[\textit{Uniqueness of the equilibrium}]\label{uniq OS}
Under Assumptions \ref{assump existence OS} and \ref{main assump OS}, there exists at most one LP MFG Nash equilibrium.
\end{proposition}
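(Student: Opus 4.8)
The plan is to argue by contradiction, exploiting the Lasry–Lions monotonicity condition in the standard way, adapted to the linear programming formulation. Suppose $(\mu^\star, m^\star)$ and $(\tilde\mu^\star, \tilde m^\star)$ are two LP MFG Nash equilibria. Both lie in $\mathcal{R}$, so each is an admissible competitor in the optimization problem defining the other. First I would write down the four optimality inequalities obtained by testing each equilibrium against the other: from $(\mu^\star, m^\star)$ being a maximizer of $\Gamma[\mu^\star, m^\star]$ on $\mathcal{R}$ we get $\Gamma[\mu^\star, m^\star](\tilde\mu^\star, \tilde m^\star) \leq \Gamma[\mu^\star, m^\star](\mu^\star, m^\star)$, i.e.
\begin{equation*}
\langle f(m^\star), \tilde m^\star - m^\star\rangle + \langle g(\mu^\star), \tilde\mu^\star - \mu^\star\rangle \leq 0,
\end{equation*}
and symmetrically, from $(\tilde\mu^\star, \tilde m^\star)$ being a maximizer of $\Gamma[\tilde\mu^\star, \tilde m^\star]$,
\begin{equation*}
\langle f(\tilde m^\star), m^\star - \tilde m^\star\rangle + \langle g(\tilde\mu^\star), \mu^\star - \tilde\mu^\star\rangle \leq 0.
\end{equation*}

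Next I would add these two inequalities. The sum is exactly
\begin{equation*}
\langle f(m^\star) - f(\tilde m^\star), \tilde m^\star - m^\star\rangle + \langle g(\mu^\star) - g(\tilde\mu^\star), \tilde\mu^\star - \mu^\star\rangle \leq 0,
\end{equation*}
which is $-\bigl(\langle f(m^\star) - f(\tilde m^\star), m^\star - \tilde m^\star\rangle + \langle g(\mu^\star) - g(\tilde\mu^\star), \mu^\star - \tilde\mu^\star\rangle\bigr) \leq 0$. By the Lasry–Lions monotonicity condition (Assumption \ref{main assump OS}(2)), the bracketed quantity is itself $\leq 0$; combining the two forces
\begin{equation*}
\langle f(m^\star) - f(\tilde m^\star), m^\star - \tilde m^\star\rangle + \langle g(\mu^\star) - g(\tilde\mu^\star), \mu^\star - \tilde\mu^\star\rangle = 0,
\end{equation*}
and hence both optimality inequalities above hold with equality. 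This means $(\tilde\mu^\star,\tilde m^\star)$ is also a maximizer of $\Gamma[\mu^\star, m^\star]$ on $\mathcal{R}$.

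The final step uses the uniqueness of the maximizer, Assumption \ref{main assump OS}(1): since $(\mu^\star, m^\star) \in \mathcal{R}$, the functional $\Gamma[\mu^\star, m^\star]$ has a unique maximizer on $\mathcal{R}$; but both $(\mu^\star, m^\star)$ and $(\tilde\mu^\star, \tilde m^\star)$ achieve that maximum, so $(\mu^\star, m^\star) = (\tilde\mu^\star, \tilde m^\star)$, proving uniqueness. The argument is essentially routine once the monotonicity inequality and the uniqueness-of-best-response assumption are in place; the only point requiring a little care is checking that adding the two optimality inequalities produces exactly the negative of the Lasry–Lions pairing, so that non-strict monotonicity already suffices (strict monotonicity is not needed here because the uniqueness of the maximizer is imposed directly in Assumption \ref{main assump OS}(1) rather than derived from strict monotonicity). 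I do not anticipate a genuine obstacle: the subprobability/convergence-in-measure subtleties that dominate the rest of the paper play no role in this particular proof, since everything takes place with fixed elements of $\mathcal{R}$ and only the bilinear pairings are manipulated.
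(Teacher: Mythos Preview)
Your proof is correct and follows essentially the same approach as the paper: derive the two optimality inequalities, add them, use the Lasry--Lions monotonicity condition to force equality, and then invoke the uniqueness of the best response (Assumption~\ref{main assump OS}(1)) to conclude. The only cosmetic difference is a sign convention in writing the optimality inequalities.
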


\begin{proof}
Let $(\mu, m)\in \mathcal{R}$ and $(\mu', m')\in \mathcal{R}$ be two LP MFG Nash equilibria. Using the equilibrium property we get the following two inequalities:
$$\langle f(m), m-m' \rangle + \langle g(\mu), \mu- \mu' \rangle\geq 0,\quad \langle f(m'), m'-m \rangle + \langle g(\mu'), \mu'- \mu \rangle\geq 0.$$
Adding up these two inequalities we obtain $\langle f(m) - f(m'), m-m' \rangle + \langle g(\mu) - g(\mu'), \mu- \mu' \rangle\geq 0$. By the Lasry-Lions monotonicity condition we get the equality in the previous inequalities. Using that 
$$\Gamma[\mu, m](\mu, m)=\langle f(m), m \rangle + \langle g(\mu), \mu \rangle =\langle f(m), m' \rangle + \langle g(\mu), \mu' \rangle=\Gamma[\mu, m](\mu', m'),$$
we deduce by the uniqueness of the best response to $(\mu, m)$ that $(\mu, m)=(\mu', m')$.
\end{proof}

\noindent We propose the following algorithm for computing the LP MFG Nash equilibrium.\vspace{10pt}

\begin{algorithm}[H]\label{algo 1}
\SetAlgoLined
\KwData{A number of steps $N$ for the equilibrium approximation; a pair $(\bar \mu^{(0)}, \bar m^{(0)})\in \mathcal{R}$;}
\KwResult{Approximate LP MFG Nash equilibrium}
\For{$\ell=0, 1, \ldots, N-1$}{
Compute a linear programming best response $(\mu^{(\ell+1)}, m^{(\ell+1)})$ to $(\bar \mu^{(\ell)}, \bar m^{(\ell)})$ by solving the linear programming problem
$$\argmax_{(\mu, m)\in \mathcal{R}}\Gamma[\bar \mu^{(\ell)}, \bar m^{(\ell)}](\mu, m).$$
\\
Set $(\bar \mu^{(\ell+1)}, \bar m^{(\ell+1)}):=\frac{\ell}{\ell+1}(\bar \mu^{(\ell)}, \bar m^{(\ell)}) + \frac{1}{\ell+1}(\mu^{(\ell+1)},  m^{(\ell+1)})=\frac{1}{\ell + 1}\sum_{\nu=1}^{\ell+1}(\mu^{(\nu)},  m^{(\nu)})$ \\
}
\caption{LPFP algorithm (Optimal stopping MFGs)}
\end{algorithm}
\vspace{10pt}

We state now the main theorem. Its proof is provided in section \ref{sec main theo}.

\begin{theorem}[\textit{Convergence of the algorithm}]\label{main theo OS}
Let Assumptions \ref{assump existence OS} and \ref{main assump OS} hold true and consider the sequences $(\bar\mu^{(N)}, \bar m^{(N)})_{N\geq 1}$ and $(\mu^{(N)}, m^{(N)})_{N\geq 1}$ generated by the Algorithm \ref{algo 1}. Then both sequences converge in the product topology $\tau_p\otimes \tilde\tau_p$ to the unique LP MFG Nash equilibrium.
\end{theorem}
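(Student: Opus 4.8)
The plan is to adapt the classical fictitious-play convergence argument of Cardaliaguet–Hadikhanloo to the linear-programming formulation, using the monotonicity condition to force the best-response sequence to contract towards the equilibrium, and then to promote weak convergence to convergence in measure via the quantitative estimates of Section~\ref{sec OS}. Write $(\bar\mu^{(\ell)},\bar m^{(\ell)})$ for the running averages and $(\mu^{(\ell+1)},m^{(\ell+1)})=\Theta(\bar\mu^{(\ell)},\bar m^{(\ell)})$ for the best responses; note $\bar m^{(\ell+1)}=\frac{\ell}{\ell+1}\bar m^{(\ell)}+\frac1{\ell+1}m^{(\ell+1)}$, and likewise for $\mu$. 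First I would introduce the "exploitability" or gap functional along the iterates,
\begin{equation*}
e_\ell:=\Gamma[\bar\mu^{(\ell)},\bar m^{(\ell)}](\mu^{(\ell+1)},m^{(\ell+1)})-\Gamma[\bar\mu^{(\ell)},\bar m^{(\ell)}](\bar\mu^{(\ell)},\bar m^{(\ell)}),
\end{equation*}
which is $\geq 0$ by optimality of the best response, and show $e_\ell\to 0$; by the definition of the $\varepsilon$-LP MFG Nash equilibrium this, together with compactness, will deliver the limit.

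The core computation is a discrete "energy" estimate. Using bilinearity of $\Gamma$ in its arguments and the telescoping identity $\bar m^{(\ell+1)}-\bar m^{(\ell)}=\frac1{\ell+1}(m^{(\ell+1)}-\bar m^{(\ell)})$, I would expand $\Gamma[\bar\mu^{(\ell+1)},\bar m^{(\ell+1)}](\bar\mu^{(\ell+1)},\bar m^{(\ell+1)})$ in terms of the quantities at step $\ell$. The monotonicity condition (Assumption~\ref{main assump OS}(2)), applied to the increments $m^{(\ell+1)}-\bar m^{(\ell)}$ and $\mu^{(\ell+1)}-\bar\mu^{(\ell)}$, controls the cross terms and yields a recursion of the schematic form
\begin{equation*}
(\ell+1)\,\phi_{\ell+1}\leq \ell\,\phi_\ell - e_\ell + \frac{C}{\ell+1},
\end{equation*}
where $\phi_\ell$ is $\Gamma[\bar\mu^{(\ell)},\bar m^{(\ell)}](\bar\mu^{(\ell)},\bar m^{(\ell)})$ up to an affine correction and $C$ comes from the uniform moment/growth bounds in Assumptions~\ref{assump existence OS}(3) and \ref{main assump OS}(3) together with the uniform $q$-moment estimate on elements of $\mathcal R$ proved in Section~\ref{sec OS}. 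Summing over $\ell$ gives $\sum_\ell e_\ell<\infty$ and in particular $\liminf_\ell e_\ell=0$; a standard monotonicity-of-Cesàro argument (or passing through a subsequence) upgrades this to $e_\ell\to 0$.

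Next I would extract convergence. By the compactness of $\mathcal R$ under $\tau_p\otimes\tau_p$ (established in Section~\ref{sec OS}) and metrizability, any subsequence of $(\bar\mu^{(N)},\bar m^{(N)})$ has a further subsequence converging in $\tau_p\otimes\tau_p$ to some $(\mu^\infty,m^\infty)\in\mathcal R$. Using the continuity properties of $f$ and $g$ in $(x,m)$ resp.\ jointly and the growth bounds, one passes to the limit in the inequality $e_\ell\to0$ to conclude that $(\mu^\infty,m^\infty)$ is an LP MFG Nash equilibrium; by Proposition~\ref{uniq OS} the limit is the unique equilibrium $(\mu^\star,m^\star)$, hence the whole sequence $(\bar\mu^{(N)},\bar m^{(N)})$ converges to it in $\tau_p\otimes\tau_p$. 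For the best-response sequence $(\mu^{(N)},m^{(N)})$, I would use that $\bar m^{(N)}\to m^\star$ together with the Cesàro structure $\bar m^{(N+1)}=\frac1{N+1}\sum_{\nu=1}^{N+1}m^{(\nu)}$: combining $e_N\to 0$, the uniqueness of the maximizer of $\Gamma[\mu^\star,m^\star]$, and an Abel-summation/Toeplitz argument forces $m^{(N)}\to m^\star$ as well (a subsequential limit of $(\mu^{(N)},m^{(N)})$ must be a best response to $(\mu^\star,m^\star)$, hence equal to $(\mu^\star,m^\star)$).

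Finally, the passage from $\tau_p$-convergence of $(m^{(N)})$ and $(\bar m^{(N)})$ to $\tilde\tau_p$-convergence (convergence in measure of the flows) is where the genuinely new input of the paper enters, and I expect this to be the main obstacle. Here I would invoke the metrics $d_M$, $\rho$ and $W_1'$ introduced in Proposition~\ref{distance tau_1} and the key estimates of Section~\ref{sec OS}: the point is that for flows satisfying the linear constraint defining $\mathcal R$, one has quantitative control of $t\mapsto m_t$ that converts $\tau_p$-type convergence of the space–time measures into convergence in measure on $[0,T]$ of the marginals, because the constraint pins down the time-regularity of $t\mapsto m_t(\varphi)$ against test functions $\varphi$ up to the (explicitly bounded) absorption defect. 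The delicate part is that this regularity is only one-sided / in an averaged sense — the flow need not be weakly continuous — so one must work with the convergence-in-measure topology throughout rather than hoping for pointwise-in-$t$ weak convergence; I would carry the $d_M$/$\rho$ estimates along the whole recursion above so that the final limit is automatically in $\tilde\tau_p$. The $\mu$-component presents no extra difficulty since $\mathcal P_p([0,T]\times\bar{\mathcal O})$ is already handled directly under $\tau_p$.
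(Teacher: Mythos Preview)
Your outline captures the broad fictitious-play spirit, but the central mechanism is not the one you describe, and the argument as written has a real gap.

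The recursion you sketch, $(\ell+1)\phi_{\ell+1}\le \ell\phi_\ell - e_\ell + C/(\ell+1)$ with $\phi_\ell$ built from $\Gamma[\bar\mu^{(\ell)},\bar m^{(\ell)}](\bar\mu^{(\ell)},\bar m^{(\ell)})$, does not hold here and in any case cannot yield $\sum_\ell e_\ell<\infty$ (the error $\sum 1/(\ell+1)$ diverges and there is no a priori lower bound on $(\ell+1)\phi_{\ell+1}$). The paper instead estimates $e_{N+1}-e_N$ directly and obtains
\[
e_{N+1}-e_N \;\le\; -\frac{e_N}{N+1}+\frac{\delta_N}{N},
\]
where $\delta_N$ is \emph{not} a constant: it contains $d_M(m^{(N+1)},m^{(N+2)})$ and $W_1(\mu^{(N+1)},\mu^{(N+2)})$, the distances between \emph{successive best responses}. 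These quantities are not controlled by any moment or growth bound; they arise from Lemma~\ref{lemma lipschitz OS} when one compares $\Gamma[\bar\mu^{(N+1)},\bar m^{(N+1)}](\mu^{(N+2)},m^{(N+2)})$ with $\Gamma[\bar\mu^{(N)},\bar m^{(N)}](\mu^{(N+1)},m^{(N+1)})$. Proving $\delta_N\to 0$ is the heart of the argument and requires Corollary~\ref{unif continuous OS}: one shows $\Theta$ is continuous on $\mathcal R$ (Lemma~\ref{Theta cont OS}), hence uniformly continuous for the metric $\rho$ on the compact space $(\mathcal R,\rho)$, and then feeds in $\rho((\bar\mu^{(N)},\bar m^{(N)}),(\bar\mu^{(N+1)},\bar m^{(N+1)}))\to 0$ from Lemma~\ref{R estimates OS}. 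This step is entirely absent from your proposal, and without it the recursion does not close. Once $\delta_N\to 0$, the conclusion $e_N\to 0$ follows from a deterministic lemma (Lemma~3.1 in \cite{hadikhanloo2017}), not from summability.

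You also misplace the role of the metrics $d_M$, $W_1'$, $\rho$ and of the topology $\tilde\tau_p$. There is no separate ``upgrade'' from $\tau_p$ to $\tilde\tau_p$ at the end: by Corollary~\ref{rho distance} all the listed topologies coincide on $\mathcal R$, so convergence in one is automatically convergence in all, and the cluster-point argument is carried out directly in $\tau_p\otimes\tilde\tau_p$ via the compactness of Theorem~\ref{R compact OS}. The convergence-in-measure machinery is not a postprocessing step; it is what makes the recursion work: $d_M$ is needed to state the Lipschitz estimate of Lemma~\ref{lemma lipschitz OS} that produces the $d_M(m^{(N+1)},m^{(N+2)})$ term in $\delta_N$, and compactness of $(\mathcal R,\rho)$ is what turns continuity of $\Theta$ into the uniform continuity driving $\delta_N\to 0$.
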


\paragraph{Numerical example.}

To illustrate the convergence of the algorithm, we solve numerically a simple MFG of optimal stopping. In this game, the state of the representative player belongs to the domain $[0,T]\times \mathcal O$ with $T=1$ and $\mathcal O=\mathbb R$ and is given by 
$$
X_t = X_0 + t + W_t,
$$
i.e. $b(t,x)=\sigma(t,x)=1$. 
The initial states of the players are distributed according to the law  $m_0^*=\mathcal{N}(0, 4)$. Before exiting the game, at each time $t$, the representative player receives an instantaneous reward given by 
$$
\int_\mathbb R (X_t - y)m_t(dy),
$$
where $m_t$ is the distribution of players still in the game at time $t$, and upon exiting the game at time $\tau$, the player receives the terminal reward
$$
\int_{[0,T]\times \mathbb R} (\tau- s)\mu(ds,dy), 
$$
where $\mu$ is the joint distribution of exit times and states of the players. The functions $f$ and $g$ are therefore defined as follows:
$$f(t, x, m):=\int_\mathbb R (x-y)m(dy),\quad g(t, x, \mu):=\int_{[0, T]\times \mathbb R}(t-s)\mu(ds, dy).$$

The representative player has an incentive to stay in the game if its state is higher than the average state of the other players who are still in the game. It is expected that the players starting with a low state value will exit the game immediately, while the players starting with a high state value will stay until the end of the game. 

To apply the LPFP algorithm, we discretize the linear program for the computation of the best response as in \cite{Mendiondo1998, stockbridge2002}. More precisely, we consider a time grid $t_i=i \Delta$ with $\Delta=\frac{T}{n_t}$, for $i\in \{0, 1, \ldots n_t\}$ and a state grid $x_{j+1}=x_j+\delta$, for $j\in \{0, 1, \ldots n_s-1\}$ with $x_0\in\mathbb R$ and $\delta>0$. We define
$$Lu(t, x)=\frac{\partial u}{\partial t}(t, x) + b(t, x)\frac{\partial u}{\partial x}(t, x) + \frac{\sigma^2}{2}(t, x)\frac{\partial^2 u}{\partial x^2}(t, x), \quad \forall u\in \mathcal{D}(L):=C^{1, 2}_b([0, T]\times \mathbb R).$$
We set $\mathcal{D}(\hat L)$ as the functions in $\mathcal{D}(L)=C^{1, 2}_b([0, T]\times \mathbb R)$ restricted to the time-state discretization grid. For $u\in \mathcal{D}(\hat L)$, we discretize the derivatives as follows
$$\hat L_t u (t_i, x_j)= \frac{1}{\Delta}[u(t_{i+1}, x_j)-u(t_i, x_j)],$$
$$\hat L_x^u u (t_i, x_j)=\frac{1}{\delta}\max(b(t_i, x_j), 0)[u(t_{i+1}, x_{j+1})-u(t_{i+1}, x_{j})],$$
$$\hat L_x^d u (t_{i}, x_j)=\frac{1}{\delta}\min(b(t_i, x_j), 0)[u(t_{i+1}, x_{j})-u(t_{i+1}, x_{j-1})],$$
$$\hat L_{xx} u (t_i, x_j)=\frac{1}{\delta^2}\frac{\sigma^2}{2}(t_i, x_j)[u(t_{i+1}, x_{j+1}) + u(t_{i+1}, x_{j-1})-2u(t_{i+1}, x_j)].$$
The discretized generator has the form:
$$\hat L u (t_i, x_j) = \hat L_t u (t_i, x_j) + \hat L_x^u u (t_i, x_j) + \hat L_x^d u (t_i, x_j) + \hat L_{xx} u (t_i, x_j).$$
The constraint reads as
$$
\sum_{i=0}^{n_t}\sum_{j=0}^{n_s} u(t_i, x_j)\mu(t_i, x_j) - \Delta \sum_{i=0}^{n_t-1}\sum_{j=1}^{n_s-1} \hat L u(t_i, x_j) m(t_i, x_j) = \sum_{j=1}^{n_s-1}u(t_0, x_j) m_0^*(x_j), 
$$
for $u\in \mathcal{D}(\hat L)$. The set $\mathcal{D}(\hat L)$ is equal to the linear span of the indicators functions 
$$\mathbf{1}_{\{(t_i, x_j)\}}, \quad i\in\{0, 1, \ldots, n_t\}, \; j\in\{0, 1, \ldots, n_s\}$$ 
on the time-state grid. By linearity, it suffices to evaluate the constraint on the set of indicator functions. We obtain a total number of $(n_t+1)\times (n_s+1)$ constraints. The discretized reward associated to a discrete mean-field term $(\bar\mu, \bar m)$ is given by
$$
\sum_{i=0}^{n_t}\sum_{j=0}^{n_s} g(t_i, x_j, \bar \mu)\mu(t_i, x_j) + \Delta\times \sum_{i=0}^{n_t-1}\sum_{j=1}^{n_s-1} f(t_i, x_j, \bar m(t_i, \cdot)) m(t_i, x_j).
$$
The generator obtained using these approximations is associated to the following Markov chain (see p. 328 in \cite{kushner2001}):
$$\mathbb P(Y_{t_{i+1}}=x_{j}|Y_{t_{i}}=x_{j})=1-\sigma^2(t_i, x_j)\frac{\Delta}{\delta^2} - |b(t_i, x_j)|\frac{\Delta}{\delta},$$
$$\mathbb P(Y_{t_{i+1}}=x_{j+1}|Y_{t_{i}}=x_{j})=\frac{\sigma^2}{2}(t_i, x_j)\frac{\Delta}{\delta^2} + b^+(t_i, x_j)\frac{\Delta}{\delta},$$
$$\mathbb P(Y_{t_{i+1}}=x_{j-1}|Y_{t_{i}}=x_{j})=\frac{\sigma^2}{2}(t_i, x_j)\frac{\Delta}{\delta^2} + b^-(t_i, x_j)\frac{\Delta}{\delta}.$$
For this to be well defined, we should have 
$$1-\sigma^2(t_i, x_j)\frac{\Delta}{\delta^2} - |b(t_i, x_j)|\frac{\Delta}{\delta}\geq 0,$$
meaning that we should have for all $i$ and $j$
$$\Delta\leq \frac{\delta^2}{\sigma^2(t_i, x_j) + \delta |b|(t_i, x_j)}.$$
The discretized constraint coincides with the constraint associated to the Markov chain $Y$ (with absorption on $\{x_0, x_{n_s}\}$). Convergence of this approximating procedure in the context of single-agent stochastic control is studied in \cite[Chapter 10]{kushner2001}.

These finite dimensional linear programs are solved using the Gurobi\footnote{\url{https://www.gurobi.com/}} solver in Python (Version 9.5.1). In order to evaluate the convergence of the algorithm, we compute the (discrete time-space) exploitability (term borrowed from \cite{perrin2020}) at each iteration: 
\begin{multline*}
\varepsilon_N=\Delta \sum_{i=0}^{n_t-1}\sum_{j=1}^{n_s-1}f(t_i, x_j, \bar m^{(N-1)}(t_i, \cdot))(m^{(N)}(t_i, x_j)-\bar m^{(N-1)}(t_i, x_j)) \\ 
+ \sum_{i=0}^{n_t}\sum_{j=0}^{n_s}g(t_i, x_j, \bar \mu^{(N-1)})(\mu^{(N)}(t_i, x_j)- \bar \mu^{(N-1)}(t_i, x_j)).
\end{multline*}

In Figure \ref{distribution OS} (left graph), we can observe the evolution of the distribution of the players $m_t$ over time and Figure \ref{distribution OS} (right) shows the exit distribution $\mu$ of the players. As expected, players starting at a low position exit  the game immediately and players starting at higher positions exit the game at later dates. Finally, Figure \ref{error_os} illustrates the convergence of the algorithm via a log-log plot (base $10$) of the exploitability. We can observe that the convergence is in $O(N^{-1})$.

\begin{figure}[h]
    \centering
    \subfloat{%
        \includegraphics[width=0.5\textwidth]{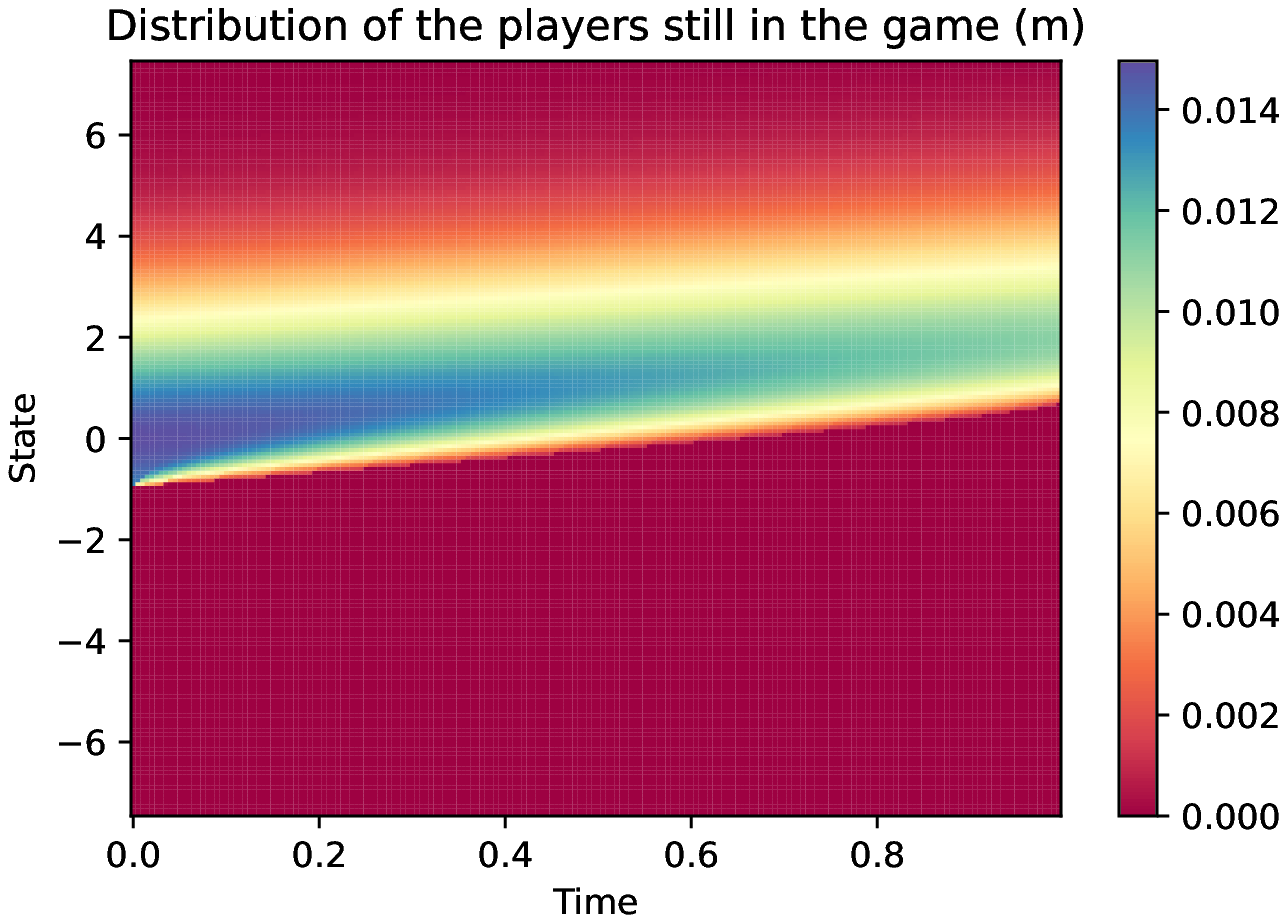}%
        \label{m_os}%
        }%
    \hfill%
    \subfloat{%
        \includegraphics[width=0.5\textwidth]{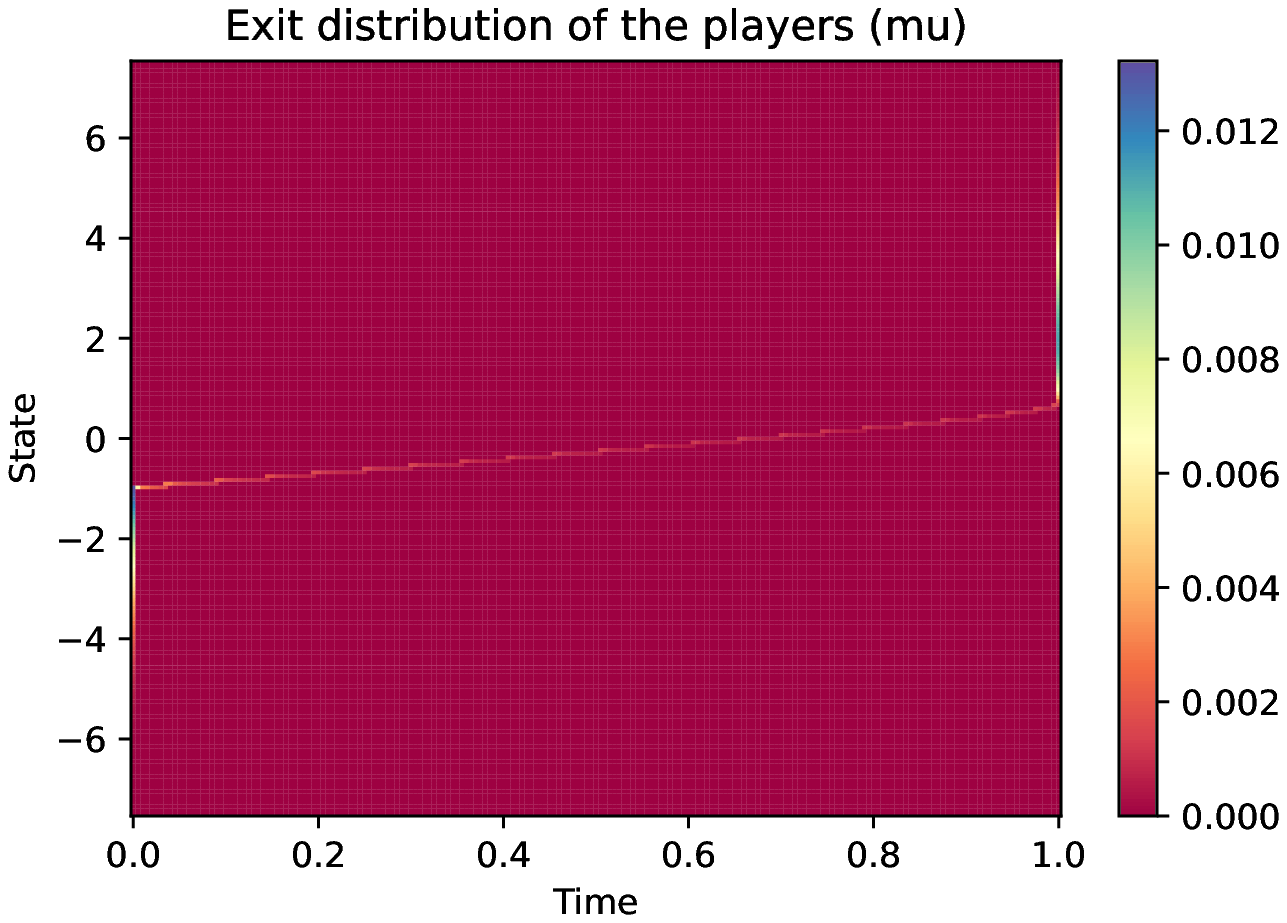}%
        \label{mu_os}%
        }%
    \caption{Equilibrium distributions at the final iteration. Left graph: $\bar m^{(N)}$, distribution of the players still in the game over time. Right graph: $\bar \mu^{(N)}$, exit distribution of the players.}
    \label{distribution OS}
\end{figure}

\begin{figure}[h]
    \centering
    \includegraphics[width=0.5\textwidth]{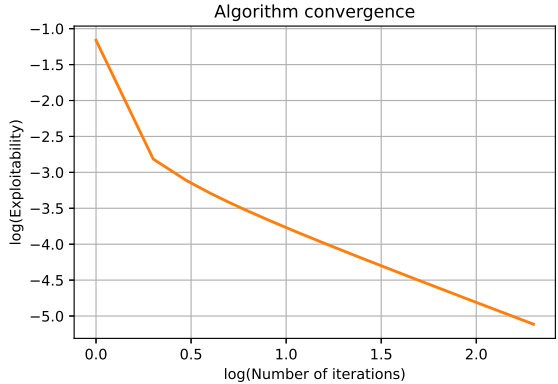}
    \caption{Convergence of the algorithm: log-log plot of the exploitability.}
    \label{error_os}
\end{figure}

\subsection{Compactness of $\mathcal{R}$ under the convergence in measure topology}\label{sec topo measure}

\noindent In this subsection, we prepare the ground for showing the convergence of our LPFP algorithm, by providing several results which are needed in the proof of the main result. In particular, we  show the compactness of the set $\mathcal{R}$ under the topology of the convergence in measure. This result also allows to prove the existence of an equilibrium under Assumption \ref{assump existence OS} only (see Theorem \ref{unbounded existence}), improving earlier results in \cite{bdt2020} and \cite{dlt2021}, which did not take into account a general dependence of the map $f$ on $m$ (resp. $g$ on $\mu$) nor coefficients with polynomial growth.\vspace{5pt}

\noindent Recall that by Theorem C.6 in \cite{dlt2021}, for $(\mu, m)\in \mathcal{R}$, there exists $(\Omega, \mathcal{F}, \mathbb F, \mathbb P, W, \tau, X)$, such that $(\Omega, \mathcal{F}, \mathbb F, \mathbb P)$ is a filtered probability space, $W$ is an $\mathbb F$-Brownian motion, $\tau$ is an $\mathbb F$-stopping time such that $\tau\leq T\wedge\tau_\mathcal{O}^{X}$ $\mathbb P$-a.s.~and $X$ is an $\mathbb F$-adapted process verifying
$$X_{t}= X_0 + \int_0^{t}b(s, X_s)ds + \int_0^{t} \sigma(s, X_s)dW_s, \quad \mathbb P \circ X_0^{-1}= m_0^*,$$
$$\mu =\mathbb P \circ (\tau, X_\tau)^{-1},\quad \text{and}\quad m_t(B) = m^\star_t(B),  \quad B\in \mathcal{B}(\bar{\mathcal{O}}), \quad t-a.e,$$
where 
$$m^{\star}_t(B):= \mathbb E^{\mathbb P}\left[ \mathds{1}_B(X_t)\mathds{1}_{t< \tau}\right].
$$
Seen as a mapping from $[0, T]$ to $\mathcal{P}^{sub}(\bar{\mathcal{O}})$ endowed with the topology of weak convergence, $m^\star$ is càdlàg. In fact, for any $\varphi\in C_b(\bar{\mathcal O})$ and $t_n\downarrow t$, by dominated convergence,
$$\lim_{n\rightarrow\infty}\int_{\bar{\mathcal{O}}}\varphi(x)m^{\star}_{t_n}(dx)=\lim_{n\rightarrow\infty}\mathbb E^{\mathbb P}\left[ \varphi(X_{t_n})\mathds{1}_{t_n< \tau}\right]=\mathbb E^{\mathbb P}\left[ \varphi(X_{t})\mathds{1}_{t< \tau}\right]=\int_{\bar{\mathcal{O}}}\varphi(x)m^{\star}_{t}(dx).$$
Similarly, for $t_n\uparrow t$,
$$\lim_{n\rightarrow\infty}\int_{\bar{\mathcal{O}}}\varphi(x)m^{\star}_{t_n}(dx)= \mathbb E^{\mathbb P}\left[ \varphi(X_{t})\mathds{1}_{t\leq \tau}\right].$$
In general we can not expect the measure $m^\star$ to be continuous (consider for example the measures associated to deterministic stopping times). The MFG interpretation of this time irregularity comes from the possible simultaneous exit of a significant amount of players. Without loss of generality, we can and will always consider the càdlàg representative of $m$, still denoted by $m$.\\

The following estimates will be useful to establish the main result of the subsection, Theorem \ref{R compact OS}.

\begin{lemma}[Estimates]\label{estimates R OS}
There exist constants $C_1$ and $C_2$ such that for all $(\mu, m)\in \mathcal{R}$ we have the following estimates:
\begin{enumerate}[(1)]
\item For all $t\in [0, T]$, $\int_{[0, T]\times \bar{\mathcal{O}}}|x|^q\mu(dt, dx) \leq C_1, \quad \int_{\bar{\mathcal{O}}}|x|^qm_t(dx)\leq C_1$.
\item For all $h\in [0, T]$, we have $\int_0^{T-h}d_{\text{BL}}(m_{t+h}, m_t)dt \leq C_2\sqrt{h}$, where $d_{\text{BL}}$ is the bounded Lipschitz distance (see Appendix \ref{app tau p}).
\end{enumerate}
\end{lemma}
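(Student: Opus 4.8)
Throughout, fix $(\mu,m)\in\mathcal R$ and let $(\Omega,\mathcal F,\mathbb F,\mathbb P,W,\tau,X)$ be the weak solution provided by the probabilistic representation recalled just before the statement, so that $X_t=X_0+\int_0^t b(s,X_s)\,ds+\int_0^t\sigma(s,X_s)\,dW_s$ with $\mathbb P\circ X_0^{-1}=m_0^*$, $\tau\le T\wedge\tau_{\mathcal O}^X$, $\mu=\mathbb P\circ(\tau,X_\tau)^{-1}$, and, taking the c\`adl\`ag representative, $m_t(B)=\mathbb E^{\mathbb P}[\mathds 1_B(X_t)\mathds 1_{t<\tau}]$. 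All constants below will depend only on $T$, $c_1$, $q$, $r$ and $\int_{\bar{\mathcal O}}|x|^q\,m_0^*(dx)$, hence be uniform over $\mathcal R$.

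\emph{Step 1: moment bound and increment bound.} I would first prove the standard SDE estimate $\mathbb E^{\mathbb P}[\sup_{s\le T}|X_s|^q]\le C_1$. Raising the SDE to the power $q$, applying the Burkholder--Davis--Gundy inequality to the stochastic integral and Jensen's inequality to $\int_0^s|b|\,du$ and to $\int_0^s\sigma^2\,du$, and using the growth bounds $|b(s,x)|\le c_1(1+|x|)$ and $\sigma^2(s,x)\le c_1(1+|x|^r)$ with $r\le 2\le q$ (so that $|x|^{rq/2}\le 1+|x|^q$), one obtains $\mathbb E[\sup_{u\le s}|X_u|^q]\le C\big(1+\mathbb E|X_0|^q+\int_0^s\mathbb E[\sup_{u\le\theta}|X_u|^q]\,d\theta\big)$, and Grönwall's lemma closes the estimate since $\mathbb E|X_0|^q=\int|x|^q\,m_0^*(dx)<\infty$ by Assumption~\ref{assump existence OS}. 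Part~(1) is then immediate: $\int_{\bar{\mathcal O}}|x|^q m_t(dx)=\mathbb E[|X_t|^q\mathds 1_{t<\tau}]\le C_1$ and $\int_{[0,T]\times\bar{\mathcal O}}|x|^q\mu(dt,dx)=\mathbb E[|X_\tau|^q]\le C_1$. The same computation applied between times $t$ and $t+h$ (again using $r\le 2\le q$ and Step~1 to bound the resulting drift and diffusion integrals) yields $\mathbb E^{\mathbb P}[|X_{t+h}-X_t|^2]\le C\,h$ for all $0\le t\le t+h\le T$.

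\emph{Step 2: time regularity.} For the second estimate, fix $h\in[0,T]$ and $\varphi:\bar{\mathcal O}\to\mathbb R$ with $\|\varphi\|_\infty\le 1$ and $\mathrm{Lip}(\varphi)\le 1$. For $t\in[0,T-h]$, write
\[
\int\varphi\,dm_{t+h}-\int\varphi\,dm_t
=\mathbb E\big[(\varphi(X_{t+h})-\varphi(X_t))\mathds 1_{t+h<\tau}\big]
+\mathbb E\big[\varphi(X_t)(\mathds 1_{t+h<\tau}-\mathds 1_{t<\tau})\big].
\]
On $\{t+h<\tau\}\subset\{t+h<\tau_{\mathcal O}^X\}$ both $X_t,X_{t+h}\in\bar{\mathcal O}$, so the first term is bounded by $\mathbb E[|X_{t+h}-X_t|]\le(\mathbb E[|X_{t+h}-X_t|^2])^{1/2}\le\sqrt{Ch}$ by Step~1. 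Since $\tau$ is a stopping time and $t<t+h$, we have $\mathds 1_{t<\tau}-\mathds 1_{t+h<\tau}=\mathds 1_{t<\tau\le t+h}$, so the second term is bounded in absolute value by $\mathbb P(t<\tau\le t+h)$. Taking the supremum over such $\varphi$ and integrating over $t\in[0,T-h]$, Tonelli's theorem gives the key gain
\[
\int_0^{T-h}\mathbb P(t<\tau\le t+h)\,dt
=\mathbb E\Big[\mathrm{Leb}\big\{t\in[0,T-h]:\tau-h\le t<\tau\big\}\Big]\le h,
\]
whence $\int_0^{T-h}d_{\mathrm{BL}}(m_{t+h},m_t)\,dt\le T\sqrt{Ch}+h\le\big(T\sqrt C+\sqrt T\big)\sqrt h=:C_2\sqrt h$.

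\emph{Main obstacle.} The genuinely delicate point is the second term of the decomposition: pointwise in $t$, $d_{\mathrm{BL}}(m_{t+h},m_t)$ need \emph{not} be small (for a deterministic exit time a macroscopic fraction of mass vanishes instantaneously, so $m$ is truly discontinuous); it is only after integrating in $t$ that the indicator jump is controlled, via the elementary observation that each realization of $\tau$ contributes to $\{t:t<\tau\le t+h\}$ on a set of length at most $h$. The $\sqrt h$ rate is therefore dictated by the diffusive increment $\mathbb E[|X_{t+h}-X_t|^2]^{1/2}$, the exit-indicator part being of smaller order $h$ once integrated.
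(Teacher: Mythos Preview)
Your proof is correct and follows essentially the same approach as the paper: the same probabilistic representation, the same standard SDE moment estimate for Part~(1), and the identical two-term decomposition $\mathbb E[(\varphi(X_{t+h})-\varphi(X_t))\mathds 1_{t+h<\tau}]+\mathbb E[\varphi(X_t)(\mathds 1_{t+h<\tau}-\mathds 1_{t<\tau})]$ for Part~(2), with the key observation that $\int_0^{T-h}\mathbb P(t<\tau\le t+h)\,dt\le h$. Your additional commentary on the ``main obstacle'' nicely highlights why the integration in $t$ is essential, but the mathematical content matches the paper's argument.
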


\begin{proof}
Let $(\mu, m)\in \mathcal{R}$ and consider their probabilistic representation. Using standard estimates, we obtain the existence of a constant $C_1\geq 0$ such that $\mathbb E^\mathbb P\|X\|^q_{T} \leq C_1$. We deduce that 
$$\int_{[0, T]\times \bar{\mathcal{O}}}|x|^q\mu(dt, dx)=\mathbb E^{\mathbb P}[|X_{\tau}|^q] \leq \mathbb E^{\mathbb P}\left[\|X\|_T^q\right]\leq C_1,$$
$$\int_{\bar{\mathcal{O}}}|x|^qm_t(dx)=\mathbb E^{\mathbb P}[|X_t|^q\mathds{1}_{t<\tau}]\leq E^{\mathbb P}\left[\|X\|_T^q\right]\leq C_1, \quad t\in [0, T].$$
Let us show the second estimate. For $\phi\in \text{BL}(\bar{\mathcal{O}})$ such that $\|\phi\|_{\text{BL}}\leq 1$ (where $\|\|_{\text{BL}}$ denotes the bounded Lipschitz norm, see Appendix \ref{app tau p}), $h\in [0, T]$ and $t\in [0, T-h]$, we have: 
\begin{align*}
\int_{\bar{\mathcal{O}}} \phi(x)(m_{t+h}-m_t)(dx)&= \mathbb E^{\mathbb P}[\phi(X_{t+h})\mathds{1}_{t+h<\tau} - \phi(X_t)\mathds{1}_{t<\tau}]\\
&=\mathbb E^{\mathbb P}[(\phi(X_{t+h}) - \phi(X_{t})) \mathds{1}_{t+h<\tau}]  + \mathbb E^{\mathbb P}[\phi(X_{t})(\mathds{1}_{t+h<\tau} - \mathds{1}_{t<\tau})]\\
&\leq \mathbb E^{\mathbb P}[|\phi(X_{t+h}) - \phi(X_{t})|]  + \mathbb E^{\mathbb P}[|\phi(X_{t})|\mathds{1}_{t<\tau\leq t+h}]\\
&\leq \mathbb E^{\mathbb P}[|X_{t+h} - X_{t}|] + \mathbb E^{\mathbb P}[\mathds{1}_{t<\tau\leq t+h}].
\end{align*}
Taking the supremum over $\phi$ we get $d_{\text{BL}}(m_{t+h}, m_t)\leq \mathbb E^{\mathbb P}[|X_{t+h} - X_{t}|] + \mathbb E^{\mathbb P}[\mathds{1}_{t<\tau\leq t+h}]$.
We also have
\begin{align*}
\int_0^{T-h}\mathbb E^{\mathbb P}[\mathds{1}_{t<\tau\leq t+h}]dt \leq  \mathbb E^{\mathbb P}\left[ \tau\wedge (T-h)-0\vee (\tau-h)\right]\leq h.
\end{align*}
On the other hand, applying Jensen's inequality and Burkholder-Davis-Gundy inequality, we get $\mathbb E^{\mathbb P}\left|X_{t+h} - X_{t}\right|^2\leq C h$, for some constant $C\geq 0$. We deduce that there exists a constant $C_2$ such that $\int_0^{T-h}d_{\text{BL}}(m_{t+h}, m_t)dt \leq C_2\sqrt{h}$.
\end{proof}

Using the above estimates, one can show that for $(\mu, m)\in \mathcal{R}$ the flow of measures $m$ is càdlàg as a map from $[0, T]$ to $\mathcal{P}_p^{sub}(\bar{\mathcal{O}})$ endowed with the $\tau_p$-topology (weak convergence with $p$-growth). In particular $m$ is Borel measurable from $[0, T]$ to $\mathcal{P}_p^{sub}(\bar{\mathcal{O}})$. \vspace{5pt}

Now we study the topology of convergence in measure which is convenient since we will be able to extract subsequences converging in $\mathcal{P}_p^{sub}(\bar{\mathcal{O}})$ a.e. on $[0, T]$. This topology allows to have a general mean-field dependence on the reward function $f$, which was not the case in \cite{dlt2021}. Moreover, it permits to consider test functions which are only measurable in time, and therefore to use the topology of the stable convergence. More precisely, we say that $(m^n)_{n\geq 1}\subset V_p$ converges in $\bar \tau_p$ to $m\in V_p$ if for all test functions $\phi:[0, T]\times \bar{\mathcal{O}}\rightarrow \mathbb R$ jointly measurable, continuous in $x$ for each $t$ and with $p$-polynomial growth, we have
$$\lim_{n\rightarrow\infty} \int_0^T\int_{\bar{\mathcal{O}}}\phi(t, x)m_t^n(dx)dt = \int_0^T\int_{\bar{\mathcal{O}}}\phi(t, x)m_t(dx)dt.$$ 
We state this result in the next lemma, whose proof follows by Proposition \ref{p stable} and an intermediary application of Corollary 2.9 in \cite{jacod1981}.

\begin{lemma}[\textit{Stable convergence with polynomial growth}]\label{lemma stable OS}
On the set $\mathcal{R}$ we have the inclusion $\tau_p\otimes \bar \tau_p\subset \tau_p\otimes \tilde \tau_p$. In other words, the convergence in measure of the flow of measures implies the convergence in the stable topology.
\end{lemma}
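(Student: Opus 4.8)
The first factor $\mathcal P_p([0,T]\times\bar{\mathcal O})$ carries the topology $\tau_p$ on both sides of the claimed inclusion, so the plan is to show that for a sequence $(m^n)_{n\ge1}\subset V_p$ with $(\mu^n,m^n)\in\mathcal R$ and $(\mu,m)\in\mathcal R$, convergence $m^n\to m$ in $\tilde\tau_p$ (convergence in measure) forces convergence in $\bar\tau_p$, i.e.
\begin{equation*}
\int_0^T\int_{\bar{\mathcal O}}\phi(t,x)\,m^n_t(dx)\,dt\longrightarrow\int_0^T\int_{\bar{\mathcal O}}\phi(t,x)\,m_t(dx)\,dt
\end{equation*}
for every jointly measurable $\phi$ that is continuous in $x$ for each $t$ and has $p$-growth. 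I would carry this out in two reductions: first reduce to bounded Carathéodory test functions via the uniform moment estimate of Lemma \ref{estimates R OS}(1); then deduce the convergence for such test functions from the fact that $\tilde\tau_p$-convergence delivers $dt$-a.e.\ convergence of the disintegration kernels along subsequences.

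For the first reduction, fix $\phi$ with $|\phi(t,x)|\le c(1+|x|^p)$ and pick, for $R>0$, a continuous cut-off $\chi_R:\bar{\mathcal O}\to[0,1]$ equal to $1$ on $\{|x|\le R\}$ and to $0$ on $\{|x|\ge R+1\}$. Then $\phi\chi_R$ is bounded and Carathéodory, while $\phi(t,x)(1-\chi_R(x))$ has $p$-growth and is supported on $\{|x|\ge R\}$, so using $q>p$, Lemma \ref{estimates R OS}(1), and that each $m^n_t$ is a subprobability,
\begin{equation*}
\sup_n\Big|\int_0^T\int_{\bar{\mathcal O}}\phi(t,x)(1-\chi_R(x))\,m^n_t(dx)\,dt\Big|\le c\,(R^{-q}+R^{p-q})\,T\,C_1,
\end{equation*}
which tends to $0$ as $R\to\infty$, and the same bound holds for $m$. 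Hence it suffices to prove the convergence for each $\phi\chi_R$; combined with the moment control, this passage is precisely the characterisation of $\bar\tau_p$-convergence by bounded Carathéodory test functions provided by Proposition \ref{p stable} in our situation.

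For bounded Carathéodory $\phi$ I would argue by the subsequence principle. Given any subsequence of $(m^n)$, since $\tilde\tau_p$ is the topology of convergence in measure and $\mathcal P_p^{sub}(\bar{\mathcal O})$ is metrizable under $\tau_p$, I can extract a further subsequence $(m^{n_k})$ with $m^{n_k}_t\to m_t$ in $\tau_p$ for $dt$-a.e.\ $t$. As $\tau_p$ is finer than the weak topology and $\phi(t,\cdot)\in C_b(\bar{\mathcal O})$, this gives $\int_{\bar{\mathcal O}}\phi(t,x)m^{n_k}_t(dx)\to\int_{\bar{\mathcal O}}\phi(t,x)m_t(dx)$ for a.e.\ $t$; the integrands are dominated by $\|\phi\|_\infty$, so dominated convergence on $[0,T]$ passes to the iterated integrals, and since the limit does not depend on the chosen subsequence the whole sequence converges. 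Translating the a.e.\ convergence of the kernels $t\mapsto m^{n_k}_t$ into stable convergence of the associated measures on $[0,T]\times\bar{\mathcal O}$ is the intermediary step where I would invoke Corollary 2.9 of \cite{jacod1981}. Putting the two reductions together yields $\bar\tau_p$-convergence of $m^n$ to $m$, hence the inclusion $\tau_p\otimes\bar\tau_p\subset\tau_p\otimes\tilde\tau_p$ on $\mathcal R$.

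The hard part is the unboundedness of the admissible test functions, which is exactly why the statement is restricted to $\mathcal R$: only there does the uniform $q$-moment estimate of Lemma \ref{estimates R OS}(1) hold, supplying the uniform integrability that makes the truncation $\phi\mapsto\phi\chi_R$ harmless. A minor but genuine point of care is to keep the truncated function continuous in $x$ (hence a smooth spatial cut-off rather than an indicator) and to verify measurability in $t$ of the inner integrals, so that the dominated-convergence argument on $[0,T]$ is legitimate.
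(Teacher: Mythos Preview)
Your argument is correct and essentially coincides with the route the paper sketches. The paper's one-line proof invokes Proposition~\ref{p stable} together with Corollary~2.9 of \cite{jacod1981}; your two reductions are precisely the unpacked content of these two references: the truncation step with the uniform $q$-moment bound of Lemma~\ref{estimates R OS}(1) is what Proposition~\ref{p stable} encapsulates (upgrading stable convergence for bounded Carath\'eodory integrands to $\bar\tau_p$ via tightness of the $[0,T]$-marginals weighted by $\bar\psi$), and your subsequence plus dominated-convergence argument for bounded Carath\'eodory $\phi$ is the substance of Corollary~2.9 in \cite{jacod1981}. One minor remark: your sentence identifying the truncation with ``the characterisation \dots\ provided by Proposition~\ref{p stable}'' slightly misstates that proposition's hypothesis (which is $\tau_p$-convergence plus marginal compactness, not bounded-test-function convergence plus moment control), but this is cosmetic since your direct truncation already does the job without needing to quote it.
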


For the sake of clarity, we give the definition of a coercive normal integrand, which can be found in \cite{rossi2003}.

\begin{definition}[Coercive normal integrands]\label{coer nor int}
$H:]0, T[\times E\rightarrow [0, \infty]$ is a coercive normal integrand if 
\begin{enumerate}[(1)]
\item It is measurable with respect to $\mathcal{L}\otimes \mathcal{B}(E)$, where $\mathcal{L}$ denotes the Lebesgue-measurable subsets of $]0, T[$.
\item The maps $x\mapsto H_t(x):=H(t, x)$ are lower semicontinuous for a.e. $t\in ]0, T[$.
\item The sets $\{x\in E: H_t(x)\leq c\}$ are compact for any $c\geq 0$ and for a.e. $t\in ]0, T[$.
\end{enumerate}
\end{definition}

\begin{lemma}\label{lemma coercive OS}
The map $H:\mathcal{P}^{sub}(\bar{\mathcal{O}})\rightarrow [0, \infty]$ defined by $H(m)=\int_{\bar{\mathcal{O}}}|x|^qm(dx)$, $m\in \mathcal{P}^{sub}(\bar{\mathcal{O}})$,
is a coercive normal integrand, where $\mathcal{P}^{sub}(\bar{\mathcal{O}})$ is endowed with the topology of weak convergence.
\end{lemma}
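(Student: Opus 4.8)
The plan is to verify the three defining properties of a coercive normal integrand (Definition \ref{coer nor int}) for the map $H(m)=\int_{\bar{\mathcal{O}}}|x|^q m(dx)$, where the domain $E=\mathcal{P}^{sub}(\bar{\mathcal{O}})$ carries the weak convergence topology. Note that $H$ here does not depend on the time variable $t$, so the role of the time-measurability, lower semicontinuity for a.e.\ $t$, and compactness of sublevel sets for a.e.\ $t$ all reduce to the corresponding statements for the single map $H$ on $\mathcal{P}^{sub}(\bar{\mathcal{O}})$, trivially extended constantly in $t$.

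First, for measurability with respect to $\mathcal L\otimes\mathcal B(E)$: since $H$ does not depend on $t$, it suffices that $H:\mathcal{P}^{sub}(\bar{\mathcal{O}})\to[0,\infty]$ is $\mathcal B(E)$-measurable; this follows from lower semicontinuity (proved next), or directly by writing $H(m)=\sup_{k}\int_{\bar{\mathcal{O}}}(|x|^q\wedge k)\,m(dx)$ as a countable supremum of continuous (hence measurable) maps, each $m\mapsto\int(|x|^q\wedge k)\,m(dx)$ being continuous for the weak topology because $x\mapsto |x|^q\wedge k$ is bounded continuous. Second, for lower semicontinuity: if $m_n\to m$ weakly in $\mathcal{P}^{sub}(\bar{\mathcal{O}})$, then for each fixed $k$, $\int(|x|^q\wedge k)\,m_n(dx)\to\int(|x|^q\wedge k)\,m(dx)$, so $\liminf_n H(m_n)\ge \liminf_n\int(|x|^q\wedge k)\,m_n(dx)=\int(|x|^q\wedge k)\,m(dx)$; letting $k\to\infty$ and using monotone convergence gives $\liminf_n H(m_n)\ge H(m)$. (This is just the Portmanteau-type lower semicontinuity of $m\mapsto\int\psi\,dm$ for $\psi\ge 0$ lower semicontinuous, applied to $\psi(x)=|x|^q$.)

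Third, and this is the only point requiring a little care, the sublevel sets $\{m\in\mathcal{P}^{sub}(\bar{\mathcal{O}}): H(m)\le c\}$ must be compact for each $c\ge 0$ in the weak topology. I would argue as follows. The set $\mathcal{P}^{sub}(\bar{\mathcal{O}})$ of subprobability measures on $\bar{\mathcal{O}}$ is weakly compact (it can be identified with probability measures on the one-point compactification $\bar{\mathcal{O}}\cup\{\infty\}$, or, when $\bar{\mathcal O}$ is already compact, it is simply the weakly compact set of measures of mass $\le 1$; in general weak* compactness of the unit ball of $\mathcal M(\bar{\mathcal O})$ intersected with the positive cone gives this). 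By the lower semicontinuity just established, each sublevel set is weakly closed, hence a weakly closed subset of the weakly compact set $\mathcal{P}^{sub}(\bar{\mathcal{O}})$, and therefore weakly compact. Alternatively, and perhaps more cleanly for the reader, the sublevel set $\{H\le c\}$ is tight — uniformly by Markov's inequality, $m(\{|x|>R\})\le c/R^q$ for all $m$ in the set — and has uniformly bounded total mass ($\le 1$), so by Prokhorov's theorem it is relatively weakly compact; combined with weak closedness it is weakly compact.

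The main (and essentially only) obstacle is bookkeeping around the topology on $\mathcal{P}^{sub}(\bar{\mathcal{O}})$: one must make sure that "weak convergence of subprobability measures" is interpreted as testing against $C_b(\bar{\mathcal O})$ (so mass can be lost in the limit only if $\bar{\mathcal O}$ is non-compact, in which case tightness plus the mass bound still yields a subprobability limit), and that the coercivity condition (3) is about compactness in exactly this topology. Once that is pinned down, the proof is a routine combination of the truncation argument for lower semicontinuity and Prokhorov/closed-subset-of-compact for the sublevel sets. I would present it in three short numbered paragraphs mirroring the three conditions of Definition \ref{coer nor int}, emphasizing that the $t$-independence of $H$ trivializes conditions (1)–(3) "for a.e.\ $t$" to their pointwise counterparts.
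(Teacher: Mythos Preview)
Your approach is essentially the paper's: the truncation $H_k(m)=\int(|x|^q\wedge k)\,dm$ yields measurability (and lower semicontinuity), and compactness of sublevel sets follows from tightness plus closedness. The paper even skips the explicit lower semicontinuity argument, noting only that compact sublevel sets suffice (compact $\Rightarrow$ closed $\Rightarrow$ lower semicontinuous).

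One caveat: your first compactness argument --- that $\mathcal{P}^{sub}(\bar{\mathcal{O}})$ is itself weakly compact --- is not correct in general under the paper's convention $\tau_0=\sigma(\mathcal{M}^s,C_b)$. When $\bar{\mathcal{O}}$ is unbounded (e.g.\ $\bar{\mathcal{O}}=\mathbb{R}$, allowed under Assumption \ref{assump existence OS}(4)(a)), the sequence $\delta_n$ has no $C_b$-weakly convergent subsequence: testing against $\phi\equiv 1$ forces any limit to have mass $1$, but the sequence is not tight. The identification with $\mathcal{P}(\bar{\mathcal{O}}\cup\{\infty\})$ and the Banach--Alaoglu argument you sketch give compactness only for the \emph{vague} topology (duality with $C_0$), not for $\tau_0$. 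Your second argument via Markov's inequality and Prokhorov is the right one, and is exactly what the paper does; drop the first alternative.
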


\begin{proof}
To show that $H$ is a coercive normal integrand it suffices to prove that $H$ is measurable and has compact level sets. Define the maps $H_k(m)=\int_{\bar{\mathcal{O}}}[|x|^q\wedge k] m(dx)$, $k\geq 1$. These functions are continuous for the topology of weak convergence. Moreover by the monotone convergence theorem $H_k(m)$ converges to $H(m)$ for each $m\in \mathcal{P}^{sub}(\bar{\mathcal{O}})$ as $k\rightarrow\infty$, which allows to conclude the measurability of $H$. Let us show that $H$ has compact level sets. Let $c\geq 0$ and $L_c:=\{m\in \mathcal{P}^{sub}(\bar{\mathcal{O}}):H(m)\leq c\}$. By definition 
$$\sup_{m\in L_c} \int_{\bar{\mathcal{O}}}|x|^qm(dx)\leq c,$$
which shows that $L_c$ is relatively compact in $\mathcal{P}^{sub}(\bar{\mathcal{O}})$. It remains to show that $L_c$ is closed. Let $(m^n)_{n\geq 1}\subset L_c$ converging to some $m\in \mathcal{P}^{sub}(\bar{\mathcal{O}})$. We have for each $n\geq 1$ and $k\geq 1$, $H_k(m^n)\leq H(m^n)\leq c$, and taking the limit $n\rightarrow\infty$, we get $H_k(m)\leq c$.
By the monotone convergence theorem we deduce that $m\in L_c$.
\end{proof}

\begin{theorem}[\textit{Compactness of $\mathcal{R}$ in $\tau_p\otimes \tilde\tau_p$}]\label{R compact OS}
The topological space $(\mathcal{R}, \tau_p\otimes \tilde \tau_p)$ is compact.
\end{theorem}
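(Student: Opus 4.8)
The strategy is to use metrizability and prove sequential compactness: the space $\mathcal{P}_p([0,T]\times\bar{\mathcal{O}})\times V_p$ equipped with $\tau_p\otimes\tilde\tau_p$ is metrizable (a product of two metrizable spaces — $\tau_p$ is metrizable on $\mathcal{P}_p$, and convergence in measure of maps valued in the metric space $\mathcal{P}_p^{sub}(\bar{\mathcal{O}})$ is metrizable), so it suffices to show that every sequence $(\mu^n,m^n)_{n\geq1}\subset\mathcal{R}$ admits a subsequence converging in $\tau_p\otimes\tilde\tau_p$ to some $(\bar\mu,\bar m)\in\mathcal{R}$. I would isolate three ingredients: (a) $\tau_p$-relative compactness of $(\mu^n)$; (b) $\tilde\tau_p$-relative compactness of $(m^n)$; (c) closedness of $\mathcal{R}$. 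For (a), Lemma \ref{estimates R OS}(1) gives $\sup_n\int_{[0,T]\times\bar{\mathcal{O}}}|x|^q\mu^n(dt,dx)\le C_1$ with $q>p$. As $[0,T]$ is compact, the $(\mu^n)$ are tight, and $\int_{|x|>R}|x|^p\,\mu^n(dx)\le C_1R^{p-q}\to0$ uniformly in $n$, so $|x|^p$ is uniformly integrable; by the characterization of $\tau_p$-relative compactness (Appendix \ref{app tau p}), a subsequence satisfies $\mu^n\to\bar\mu$ in $\tau_p$.

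For (b) — the crucial point, and the reason for working with the convergence-in-measure topology — I would apply a compactness criterion for measurable maps valued in a metric space in the spirit of \cite{rossi2003}, whose two hypotheses are furnished by Lemma \ref{estimates R OS}. The coercivity/tightness hypothesis: for every $t$ and $n$, $H(m^n_t)=\int_{\bar{\mathcal{O}}}|x|^qm^n_t(dx)\le C_1$, and $H$ is a coercive normal integrand on $\mathcal{P}^{sub}(\bar{\mathcal{O}})$ by Lemma \ref{lemma coercive OS}. The integral-equicontinuity-in-time hypothesis: $\sup_n\int_0^{T-h}d_{\text{BL}}(m^n_{t+h},m^n_t)\,dt\le C_2\sqrt h\to0$ as $h\downarrow0$, by Lemma \ref{estimates R OS}(2), with $d_{\text{BL}}$ metrizing weak convergence. (Concretely one can also argue by hand: fixing a countable family $(\varphi_k)\subset\text{BL}(\bar{\mathcal{O}})$ determining weak convergence, each $t\mapsto\langle\varphi_k,m^n_t\rangle$ is bounded with a uniform $L^1$-modulus of continuity by Lemma \ref{estimates R OS}(2), so Fréchet--Kolmogorov gives $L^1$-relative compactness; a diagonal extraction over $k$ yields a subsequence and a full-measure set of times along which $m^n_t$ converges weakly, the limit remaining in the compact level set $\{H\le C_1\}$ by the moment bound.) This produces a subsequence and a measurable $\bar m:[0,T]\to\mathcal{P}^{sub}(\bar{\mathcal{O}})$ with $m^n_t\to\bar m_t$ weakly for a.e.\ $t$. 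Since $q>p$, the uniform $q$-moment bound upgrades a.e.\ weak convergence to a.e.\ $\tau_p$-convergence, whence $m^n\to\bar m$ in $\tilde\tau_p$; moreover $\bar m_t$ is a subprobability for a.e.\ $t$ and $\int_0^T\!\int_{\bar{\mathcal{O}}}|x|^p\bar m_t(dx)dt<\infty$, so $\bar m\in V_p$.

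For (c), fix $u\in C_b^{1,2}([0,T]\times\bar{\mathcal{O}})$ and pass to the limit in the identity defining $\mathcal{R}$ along the extracted subsequence. The term $\int u\,d\mu^n\to\int u\,d\bar\mu$ because $u\in C_b$ and $\mu^n\to\bar\mu$ in $\tau_p$; the initial term does not depend on $n$. For the drift term, $(t,x)\mapsto(\partial_tu+\mathcal Lu)(t,x)=\partial_tu+b\,\partial_xu+\tfrac{\sigma^2}{2}\partial_{xx}u$ is jointly measurable, continuous in $x$ for each $t$, and of $p$-growth (using Assumption \ref{assump existence OS}(2), the boundedness of $\partial_tu,\partial_xu,\partial_{xx}u$, and $r\le p$); hence Lemma \ref{lemma stable OS} (convergence in measure implies stable convergence with $p$-growth) yields $\int_0^T\!\int_{\bar{\mathcal{O}}}(\partial_tu+\mathcal Lu)m^n_t(dx)dt\to\int_0^T\!\int_{\bar{\mathcal{O}}}(\partial_tu+\mathcal Lu)\bar m_t(dx)dt$. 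Therefore $(\bar\mu,\bar m)$ satisfies the constraint for every such $u$, i.e.\ $(\bar\mu,\bar m)\in\mathcal{R}$, and sequential — hence topological — compactness follows.

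The main obstacle is step (b): the flows of measures are only càdlàg and can genuinely jump (e.g.\ for deterministic stopping times), so weak compactness together with tightness of the state marginals does not by itself yield a convergent subsequence of flows. It is precisely the integral-in-time regularity of Lemma \ref{estimates R OS}(2) that supplies the missing equicontinuity allowing one to pass from compactness pointwise in $t$ to compactness of the whole flow in measure, and the delicate part is the bookkeeping: producing a single full-measure set of "good" times, checking measurability of the limit flow $\bar m$, and carrying out the weak-to-$\tau_p$ upgrade using the strict inequality $q>p$.
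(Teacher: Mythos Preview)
Your proposal is correct and follows essentially the same approach as the paper: metrizability reduces to sequential compactness, the $q$-moment bound from Lemma~\ref{estimates R OS}(1) gives $\tau_p$-compactness of $(\mu^n)$, the coercive integrand $H$ of Lemma~\ref{lemma coercive OS} together with the integral equicontinuity of Lemma~\ref{estimates R OS}(2) feed the Rossi--Savar\'e compactness criterion \cite{rossi2003} for convergence in measure, the a.e.\ weak limit is upgraded to a.e.\ $\tau_p$-convergence via the uniform $q$-moment, and the constraint passes to the limit through Lemma~\ref{lemma stable OS}. Your additional ``by hand'' sketch via Fr\'echet--Kolmogorov and a diagonal extraction is a nice alternative justification of step~(b), but the main line of argument is identical to the paper's.
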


\begin{proof}
Since the space $(\mathcal{R}, \tau_p\otimes \tilde \tau_p)$ is metrizable, it suffices to show that it is sequentially compact. Consider a sequence $(\mu^n, m^n)_{n\geq 1}\subset \mathcal{R}$. Using the estimate (1) from Lemma \ref{estimates R OS}
$$\int_{[0, T]\times \bar{\mathcal{O}}}|x|^q\mu^n(dt, dx) \leq C_1,$$
we get by Corollary \ref{compact M_p} that up to a subsequence, $(\mu^n)_{n\geq 1}$ converges to some $\mu\in \mathcal{P}_p([0, T]\times \bar{\mathcal{O}})$ in $\tau_p$. Let us show now that we can extract a further subsequence such that $(m^n)_{n\geq 1}$ converges to some $m$ in $\tilde \tau_p$. To prove this, we use the relative compactness criterion given in Theorem 2 and Extension 1 in \cite{rossi2003} for the convergence in measure topology. Let $H$ be the map defined in Lemma \ref{lemma coercive OS}. Using the first estimates of Lemma \ref{estimates R OS},
$$\sup_{n\geq 1}\int_0^TH(m^{n}_t)dt=\sup_{n\geq 1}\int_0^T\int_{\bar{\mathcal{O}}}|x|^qm^{n}_t(dx)dt \leq C_1T.$$
Now, using the second estimate of Lemma \ref{estimates R OS}, $\lim_{h\downarrow 0} \sup_{n\geq 1} \int_0^{T-h} d_{\text{BL}}(m^{n}_{t+h}, m^{n}_t)dt=0$. By Theorem 2 and Extension 1 in \cite{rossi2003}, up to a subsequence, $(m^{n})_{n\geq 1}$ converges to some $m \in M([0, T];\mathcal{P}^{sub}(\bar{\mathcal{O}}))$ in measure. Up to another subsequence, $(m_t^{n})_{n\geq 1}$ converges weakly to $m_t$ $t$-a.e. on $[0, T]$. Since for each $t\in [0, T]$,
$$\sup_{n\geq 1}\int_{\bar{\mathcal{O}}}|x|^q m^{n}_t(dx)\leq C_1,$$
$(m_t^{n})_{n\geq 1}$ converges to $m_t$ in $\tau_p$ $t$-a.e. on $[0, T]$, and in particular $(m^{n})_{n\geq 1}$ converges to $m$ in $\tilde \tau_p$. Finally, by Lemma \ref{lemma stable OS} we get $m^n\rightarrow m$ in $\bar\tau_p$ and we can pass easily to the limit in the constraint to conclude that $(\mu, m)\in \mathcal{R}$. 
\end{proof}

Recall that if a set is compact and Hausdorff under two comparable topologies, then both topologies coincide (see \cite{munkres2000} Chapter 3, Exercise 1.(b) p.168). As a consequence of the above theorem, we get the following result which will be useful to consider different metrics on the space $\mathcal{R}$ in order to show the convergence of the algorithm.

\begin{corollary}\label{rho distance}
On the set $\mathcal{R}$ the topologies $\tau_0\otimes \tau_0$, $\tau_p\otimes \tau_p$, $\tau_p\otimes \bar \tau_p$ and $\tau_p\otimes \tilde\tau_p$ coincide.
\end{corollary}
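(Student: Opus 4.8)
The plan is to deduce Corollary \ref{rho distance} directly from Theorem \ref{R compact OS} together with the general topological fact quoted just before it. First I would recall the chain of inclusions among the four topologies on $\mathcal{R}$. Since any continuous bounded function has $p$-growth for $p\geq 1$, and any continuous (in $x$, for each $t$) function with $p$-growth is in particular a test function for the stable topology, we have on the product space the inclusions $\tau_0\otimes\tau_0 \subset \tau_p\otimes\tau_p \subset \tau_p\otimes\bar\tau_p \subset \tau_p\otimes\tilde\tau_p$, where the last inclusion is exactly Lemma \ref{lemma stable OS}. Here I am using $\tau_0\subset\tau_p$ (a topology is finer if it has more open sets / more convergent-sequence requirements), $\tau_0\subset\tau_0$ trivially on the first factor, and the analogous statements on the second factor: convergence in $\tilde\tau_p$ (convergence in measure in $\mathcal{P}_p^{sub}$) forces, up to subsequences, a.e. convergence in $\tau_p$, hence in $\bar\tau_p$ by Lemma \ref{lemma stable OS}, and $\bar\tau_p$-convergence is stronger than $\tau_p$-convergence which is stronger than $\tau_0$-convergence.

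Next I would invoke Theorem \ref{R compact OS}: $(\mathcal{R},\tau_p\otimes\tilde\tau_p)$ is compact. I also need that the coarsest topology in the chain, $\tau_0\otimes\tau_0$, is Hausdorff on $\mathcal{R}$ — this is clear since $\tau_0$ on $\mathcal{P}_p([0,T]\times\bar{\mathcal{O}})$ is the weak topology which is Hausdorff, and on $V_p$ the weak topology (on the associated measures $m_t(dx)dt$, after the a.e. identification) is likewise Hausdorff, so the product is Hausdorff; a fortiori every finer topology in the chain is Hausdorff. Then the quoted exercise from Munkres (\cite{munkres2000}, Ch.~3, Ex.~1(b), p.~168) applies: if a set carries two comparable topologies, one compact and the other Hausdorff, they coincide. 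Applying this with the compact topology $\tau_p\otimes\tilde\tau_p$ (the finest) and the Hausdorff topology $\tau_0\otimes\tau_0$ (the coarsest) shows $\tau_0\otimes\tau_0 = \tau_p\otimes\tilde\tau_p$ on $\mathcal{R}$. Since all four topologies are sandwiched between these two equal ones, they all coincide, which is the claim.

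There is essentially no obstacle here beyond bookkeeping: the real content is already in Theorem \ref{R compact OS} and Lemma \ref{lemma stable OS}. The only point requiring a moment's care is verifying the inclusion chain — in particular that $\tau_p\otimes\tilde\tau_p$ really is finer than $\tau_p\otimes\bar\tau_p$ on $\mathcal{R}$ (this is Lemma \ref{lemma stable OS}, so it may be cited) and that $\tau_0\otimes\tau_0$ is genuinely Hausdorff on $\mathcal{R}$ (immediate from the Hausdorff property of weak convergence of finite signed measures on a Polish space, noting that elements of $V_p$ are identified a.e.\ so the map $m\mapsto m_t(dx)dt$ is injective on $\tilde U$). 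Once these are in place, the corollary is one line: apply the compact-Hausdorff coincidence principle to the extreme members of the chain and conclude that every topology in between equals both.
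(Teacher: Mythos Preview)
Your proposal is correct and follows exactly the approach the paper intends: the paper states the corollary as an immediate consequence of Theorem \ref{R compact OS} via the compact--Hausdorff coincidence principle from \cite{munkres2000}, and you have correctly spelled out the chain of inclusions (using Lemma \ref{lemma stable OS} for the final one) together with the Hausdorff property of the coarsest topology.
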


The existence of a maximizer of the best response map $\Theta:\mathcal{R}\rightarrow 2^\mathcal{R}$ defined by
$$\Theta(\bar \mu, \bar m)=\argmax_{(\mu, m)\in \mathcal{R}}\Gamma[\bar \mu, \bar m](\mu, m), \quad (\bar \mu, \bar m)\in \mathcal{R}$$ follows by the same arguments as in Theorem 2.14 from \cite{dlt2021}. Then, by applying the Kakutani-Fan-Glicksberg’s fixed point theorem for set-valued maps (see Theorem 3.11 in \cite{dlt2021}) together with an intermediary application of Lemmas \ref{slutsky_p} and \ref{slutsky_stable_p}, we get the existence of an LP MFG equilibrium.

\begin{theorem}[\textit{Existence of LP MFG equilibria}]\label{unbounded existence}
Under Assumption \ref{assump existence OS}, there exists an LP MFG Nash equilibrium.
\end{theorem}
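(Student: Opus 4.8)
The plan is to obtain the equilibrium as a fixed point of the best-response correspondence $\Theta:\mathcal{R}\to 2^{\mathcal{R}}$ via the Kakutani--Fan--Glicksberg theorem, working throughout on the compact metrizable space $(\mathcal{R},\tau_p\otimes\tilde\tau_p)$ furnished by Theorem~\ref{R compact OS}. First I would record that $\mathcal{R}$ is nonempty (it contains the occupation measure of the uncontrolled diffusion $X$ with $\tau=T\wedge\tau_{\mathcal O}^X$, which is legitimate under Assumption~\ref{assump existence OS}(4)) and convex (the defining constraint is linear in $(\mu,m)$, and convexity is preserved because the subprobability property and the $p$-th moment bounds are convex constraints). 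By Corollary~\ref{rho distance} the topologies $\tau_0\otimes\tau_0$, $\tau_p\otimes\tau_p$, $\tau_p\otimes\bar\tau_p$ and $\tau_p\otimes\tilde\tau_p$ all agree on $\mathcal{R}$, so I am free to test convergence against continuous bounded functions while simultaneously exploiting $p$-growth test functions and merely-measurable-in-time test functions — this flexibility is what makes the continuity arguments go through.

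Next I would verify the hypotheses of the fixed-point theorem for $\Theta$. Nonemptiness of $\Theta(\bar\mu,\bar m)$: for fixed $(\bar\mu,\bar m)\in\mathcal{R}$ the functional $(\mu,m)\mapsto\Gamma[\bar\mu,\bar m](\mu,m)=\langle f(\bar m),m\rangle+\langle g(\bar\mu),\mu\rangle$ is linear; the growth bounds in Assumption~\ref{assump existence OS}(3) together with the uniform $q$-th moment estimate from Lemma~\ref{estimates R OS}(1) and $q>p$ give a uniform integrability that upgrades $\tau_p$-convergence (resp.\ $\bar\tau_p$-convergence) to convergence of these integrals — here the stable convergence of $m^n$ in $\bar\tau_p$, guaranteed by Lemma~\ref{lemma stable OS}, is essential because $x\mapsto f(\bar m_t,x)$ need only be continuous in $x$ and measurable in $t$. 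Hence $\Gamma[\bar\mu,\bar m]$ is continuous on the compact set $\mathcal{R}$ and attains its maximum, so $\Theta(\bar\mu,\bar m)\neq\varnothing$; it is convex because the maximizer set of a linear functional over a convex set is convex. Closedness of the graph of $\Theta$: if $(\bar\mu^n,\bar m^n)\to(\bar\mu,\bar m)$ and $(\mu^n,m^n)\in\Theta(\bar\mu^n,\bar m^n)$ with $(\mu^n,m^n)\to(\mu,m)$, then $(\mu,m)\in\mathcal{R}$ by compactness, and passing to the limit in $\Gamma[\bar\mu^n,\bar m^n](\mu^n,m^n)\ge\Gamma[\bar\mu^n,\bar m^n](\nu,\eta)$ for arbitrary $(\nu,\eta)\in\mathcal{R}$ yields $(\mu,m)\in\Theta(\bar\mu,\bar m)$; this limit passage is exactly where Lemmas~\ref{slutsky_p} and \ref{slutsky_stable_p} (Slutsky-type statements) are invoked, to handle the joint convergence of the mean-field argument $(\bar\mu^n,\bar m^n)$ and the variable $(\mu^n,m^n)$ in the bilinear pairings — one needs, for instance, $\langle f(\bar m^n),m^n\rangle\to\langle f(\bar m),m\rangle$, which does not follow from separate convergences without the uniform moment control and the monotone-approximation trick (truncating $|x|^p\wedge k$ as in Lemma~\ref{lemma coercive OS}).

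With nonemptiness, convexity and closed graph established on a nonempty compact convex subset of the Hausdorff locally convex space $\mathcal{P}_p([0,T]\times\bar{\mathcal O})\times V_p$ (note $\tilde U$ and hence the ambient space is locally convex, and $\mathcal{R}$ is convex, so the abstract hypotheses of Kakutani--Fan--Glicksberg are met), I conclude there is $(\mu^\star,m^\star)\in\Theta(\mu^\star,m^\star)$, i.e.\ $\Gamma[\mu^\star,m^\star](\mu,m)\le\Gamma[\mu^\star,m^\star](\mu^\star,m^\star)$ for all $(\mu,m)\in\mathcal{R}$ — precisely the definition of an LP MFG Nash equilibrium. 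I would then remark that this mirrors Theorems~2.14 and 3.11 of \cite{dlt2021}, the only genuinely new ingredient being the use of the convergence-in-measure topology $\tilde\tau_p$, which accommodates the polynomial growth of $f,g$ and the coefficients and the general $m$-dependence of $f$ that the weak-convergence framework of \cite{dlt2021} could not. The main obstacle is the continuity of the reward pairings under the weakest topology available: because the flow $m$ is only càdlàg (not continuous) in $t$ and $f$ is only continuous in $x$, one cannot use $C_b([0,T]\times\bar{\mathcal O})$ test functions alone, and the combination of the compactness criterion of \cite{rossi2003}, the stable-convergence upgrade of Lemma~\ref{lemma stable OS}, and the uniform $q$-moment bound of Lemma~\ref{estimates R OS} must be orchestrated carefully to close the graph of $\Theta$.
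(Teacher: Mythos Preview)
Your proposal is correct and follows essentially the same approach as the paper: the paper's proof is a brief sketch that establishes nonemptiness of the best-response correspondence via the arguments of Theorem~2.14 in \cite{dlt2021}, then applies the Kakutani--Fan--Glicksberg fixed point theorem (as in Theorem~3.11 of \cite{dlt2021}) with the closed-graph property handled by Lemmas~\ref{slutsky_p} and~\ref{slutsky_stable_p}. Your write-up simply expands these steps in detail, correctly identifying that the compactness of $\mathcal{R}$ in $\tau_p\otimes\tilde\tau_p$ (Theorem~\ref{R compact OS}), the topology equivalence of Corollary~\ref{rho distance}, and the stable-convergence upgrade of Lemma~\ref{lemma stable OS} are the new ingredients needed beyond \cite{dlt2021}.
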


To show the convergence of the algorithm, we use a precise metric on the set $\mathcal{R}$, denoted by $\rho$, which is introduced below. In particular,this metric is used in Corollary \ref{unif continuous OS}, where we show the convergence to zero of the distance $\rho$ between two successive best responses. We denote by $W_1$  the $1$-Wasserstein metric on $\mathcal{P}_1([0, T]\times \bar{\mathcal{O}})$. We also make use of an analogue of the $1$-Wasserstein metric on $\mathcal{P}_1^{sub}(\bar{\mathcal{O}})$, which is denoted by $W_1'$ and constructed as in Appendix B of \cite{claisse2019}. This metric depends on some arbitrary reference point $x_0\in \bar{\mathcal{O}}$ (which is fixed for the rest of the paper) and metrizes the topology $\tau_1$ in $\mathcal{P}_1^{sub}(\bar{\mathcal{O}})$ described in Appendix \ref{app tau p} (see Lemma B.2. in \cite{claisse2019}). In particular, we use the following Kantorovich duality type result (Lemma B.1 in \cite{claisse2019}):
$$W_1'(m, m')=\sup_{\phi\in \text{Lip}_1(\bar{\mathcal{O}}, x_0)}\int_{\bar{\mathcal{O}}}\phi(x)(m-m')(dx) + |m(\bar{\mathcal{O}})- m'(\bar{\mathcal{O}})|,\quad m, m'\in \mathcal{P}_1^{sub}(\bar{\mathcal{O}}),$$
where $\text{Lip}_1(\bar{\mathcal{O}}, x_0)$ is the set of all functions $\phi:\bar{\mathcal{O}}\rightarrow \mathbb R$ with Lipschitz constant smaller or equal to 1 and such that $\phi(x_0)=0$.

\begin{proposition}[\textit{Metric $\rho$}]\label{distance tau_1}
Any of the topologies on the set $\mathcal{R}$ considered in Corollary \ref{rho distance} is induced by the metric $\rho$, given by $ \rho((\mu, m), (\mu', m'))=W_1(\mu, \mu') + d_M(m, m')$, with $(\mu, m), (\mu', m')\in \mathcal{R}$, and 
\begin{equation}\label{distance measure}
d_M(m, m'):=\int_0^T W_1'(m_t, m_t')dt.    
\end{equation}
\end{proposition}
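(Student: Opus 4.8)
The strategy is to show that $\rho$ is a metric inducing a topology which is comparable to (in fact, equal to) one of the topologies appearing in Corollary \ref{rho distance}, and then invoke the uniqueness-of-topology argument already used in the excerpt (if a set is compact Hausdorff under two comparable topologies, they coincide). Concretely, I would proceed as follows. First, check that $\rho$ is well defined on $\mathcal{R}$: by Lemma \ref{estimates R OS}(1) every $(\mu,m)\in\mathcal{R}$ has finite $q$-th moments with $q>p\geq 1$, so $\mu\in\mathcal{P}_1([0,T]\times\bar{\mathcal{O}})$ and $m_t\in\mathcal{P}_1^{sub}(\bar{\mathcal{O}})$ for a.e.\ $t$; hence $W_1(\mu,\mu')<\infty$ and, using the Kantorovich duality formula quoted just before the proposition together with the uniform moment bound, $W_1'(m_t,m_t')$ is finite and bounded uniformly in $t$, so $d_M(m,m')=\int_0^T W_1'(m_t,m_t')\,dt<\infty$. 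Next, verify that $\rho$ is a metric: $W_1$ is a metric on $\mathcal{P}_1([0,T]\times\bar{\mathcal{O}})$ (standard) and $W_1'$ is a metric on $\mathcal{P}_1^{sub}(\bar{\mathcal{O}})$ (Appendix B of \cite{claisse2019}), so $d_M$ inherits the triangle inequality and symmetry by integration; positivity/separation of $d_M$ follows because $W_1'(m_t,m_t')=0$ for a.e.\ $t$ forces $m=m'$ in $V_p=M_p$ (recall elements are identified a.e.\ on $[0,T]$), and similarly $W_1(\mu,\mu')=0$ forces $\mu=\mu'$. Thus $\rho$ is a genuine metric on $\mathcal{R}$.

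The core of the argument is to identify the topology $\tau_\rho$ induced by $\rho$ with the topology $\tau_p\otimes\tilde\tau_p$. I would do this by comparing $\tau_\rho$ with $\tau_0\otimes\tilde\tau_0$ (which by Corollary \ref{rho distance} coincides on $\mathcal{R}$ with $\tau_p\otimes\tilde\tau_p$). For the first coordinate this is classical: on the set of probability measures with uniformly bounded $q$-th moments, $W_1$-convergence is equivalent to weak convergence $\tau_0$ (convergence in $W_1$ implies weak convergence always, and on a uniformly $q$-integrable family the converse holds since weak convergence plus uniform integrability of the first moment gives $W_1$-convergence). For the second coordinate, I need: $d_M$-convergence $\Longleftrightarrow$ convergence in measure of $t\mapsto m_t^n$ towards $t\mapsto m_t$ valued in $(\mathcal{P}_1^{sub}(\bar{\mathcal{O}}),\tau_1)$, which is exactly $\tilde\tau_1$, and then invoke Corollary \ref{rho distance} to pass between $\tilde\tau_1$ and $\tilde\tau_p$ on $\mathcal{R}$. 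Since $W_1'$ metrizes $\tau_1$ on $\mathcal{P}_1^{sub}(\bar{\mathcal{O}})$ (Lemma B.2 in \cite{claisse2019}), the equivalence ``$\int_0^T W_1'(m_t^n,m_t)\,dt\to 0$ iff $m^n\to m$ in measure'' is the standard fact that for maps into a metric space with uniformly bounded distances, $L^1$-convergence of the distance is equivalent to convergence in measure — here the uniform bound on $W_1'(m_t^n,m_t)$ comes again from Lemma \ref{estimates R OS}(1) via the duality formula. Combining the two coordinates: $\rho((\mu^n,m^n),(\mu,m))\to 0$ iff $\mu^n\to\mu$ in $\tau_0$ and $m^n\to m$ in $\tilde\tau_1$, i.e.\ iff $(\mu^n,m^n)\to(\mu,m)$ in $\tau_0\otimes\tilde\tau_1$, which on $\mathcal{R}$ equals $\tau_p\otimes\tilde\tau_p$ by Corollary \ref{rho distance}.

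To finish, since $(\mathcal{R},\tau_p\otimes\tilde\tau_p)$ is compact (Theorem \ref{R compact OS}) and metrizable hence Hausdorff, and $(\mathcal{R},\tau_\rho)$ is metric hence Hausdorff, and we have just shown the two topologies have the same convergent sequences — both being metrizable, this means the topologies coincide. Alternatively, and more cleanly: the identity map $(\mathcal{R},\tau_p\otimes\tilde\tau_p)\to(\mathcal{R},\tau_\rho)$ is sequentially continuous (by the ``$\Longrightarrow$'' direction above) between metrizable spaces, hence continuous; it is a continuous bijection from a compact space to a Hausdorff space, hence a homeomorphism. Therefore $\rho$ induces exactly the topology $\tau_p\otimes\tilde\tau_p$, and by Corollary \ref{rho distance} also each of the other topologies listed there.

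\textbf{Main obstacle.} The routine parts are the metric axioms and the first-coordinate equivalence. The delicate point is the second-coordinate equivalence, specifically handling the almost-everywhere subtleties: one must be careful that the ``subprobability version'' convention for $m\in V_p$ is used consistently, that $W_1'$ depends on the fixed reference point $x_0$ but this does not affect the induced topology, and that the uniform bound on $W_1'(m_t^n,m_t)$ (needed to convert convergence in measure into $L^1$-convergence of the distance and back) genuinely follows from Lemma \ref{estimates R OS}(1) — this uses both the uniform $q$-th moment bound and the fact that the total-variation terms $|m_t^n(\bar{\mathcal{O}})-m_t(\bar{\mathcal{O}})|$ appearing in the duality formula are bounded by $1$. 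A secondary subtlety is that convergence in measure a priori refers to $\tau_1$-convergence of $m_t^n$, while $\tilde\tau_p$ refers to $\tau_p$; bridging these requires Corollary \ref{rho distance} (or, directly, the uniform $q>p$ moment bound to upgrade $\tau_1$-a.e.\ convergence to $\tau_p$-a.e.\ convergence along subsequences).
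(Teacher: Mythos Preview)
Your proposal is correct and follows essentially the same approach as the paper. The paper's proof is much terser: it invokes Lemma~\ref{bound conv in meas} directly (which is exactly the ``standard fact'' you describe about $L^1$-convergence of distances being equivalent to convergence in measure under a uniform bound), and reduces everything to checking the single estimate $W_1'(m_t,\mathbf{0})\leq C$ uniformly over $(\mu,m)\in\mathcal{R}$ and $t\in[0,T]$, which follows from Lemma~\ref{estimates R OS}(1) via the duality formula --- precisely the bound you identify as the key point. Your extra care with the metric axioms, well-definedness, and the bridge between $\tilde\tau_1$ and $\tilde\tau_p$ is sound but not spelled out in the paper, which simply asserts that $\rho$ metrizes $\tau_1\otimes\tilde\tau_1$ and leaves the connection to Corollary~\ref{rho distance} implicit.
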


\begin{proof}
Let us prove that the metric $\rho$ metrizes the topology $\tau_1 \otimes \tilde{\tau}_1$ on $\mathcal{R}$. To do so, by Lemma \ref{bound conv in meas}, it suffices to show that there exists a constant $C\geq 0$ such that for each $(\mu, m)\in \mathcal{R}$ we have $t$-a.e. on $[0, T]$, $W_1'(m_t, \mathbf{0})\leq C$, where $\mathbf{0}$ denotes the null measure on $\bar{\mathcal{O}}$. Let $(\mu, m)\in \mathcal{R}$. By Lemma \ref{estimates R OS}, there exists a constant $C\geq 0$ such that for all $t\in [0, T]$ we have
$$\int_{\bar{\mathcal{O}}}|x|m_t(dx)\leq C.$$
We obtain for all $t\in [0, T]$,
\begin{align*}
W_1'(m_t, \mathbf{0})&= \sup_{\phi\in \text{Lip}_1(\bar{\mathcal{O}}, x_0)}\int_{\bar{\mathcal{O}}}\phi(x)m_t(dx) + m_t(\bar{\mathcal{O}})\leq \int_{\bar{\mathcal{O}}}|x|m_t(dx) + |x_0| + 1\leq C + |x_0| + 1.
\end{align*}
\end{proof}

\subsection{Convergence of the algorithm}

In this subsection,  Assumption \ref{assump existence OS} and Assumption \ref{main assump OS} are in force. We recall the quantities computed by the algorithm: for $N\geq 0$,
$$(\mu^{(N+1)}, m^{(N+1)}):=\Theta(\bar \mu^{(N)}, \bar m^{(N)}),$$
$$(\bar \mu^{(N+1)}, \bar m^{(N+1)}):=\frac{N}{N+1}(\bar \mu^{(N)}, \bar m^{(N)}) + \frac{1}{N+1}(\mu^{(N+1)},  m^{(N+1)})=\frac{1}{N + 1}\sum_{k=1}^{N+1}(\mu^{(k)},  m^{(k)}).$$
Since all these tuples are in $\mathcal{R}$, the measures $(m^{(N)})_N$ and $(\bar{m}^{(N)})_N$ admit càdlàg representatives. Without loss of generality, we  consider the càdlàg representatives, for which the same notation is used.\\

\noindent We first establish some useful estimates, as well as the continuity of the best response map $\Theta$.

\subsubsection{Estimates and regularity of the best response map}

Using the definitions of $(\bar{\mu}^{(N)}, \bar{m}^{(N)})$ given by the algorithm procedure, we derive the following estimates between two successive output measures.

\begin{lemma}[\textit{Estimates between two successive output measures $(\bar \mu^{(N)}, \bar m^{(N)})$}]\label{R estimates OS}
For all $N\geq 1$, we have the following estimates:
\begin{enumerate}[(i)]
\item There exists a constant $C_3\geq 0$ (independent of $N$) such that
$$W_1(\bar \mu^{(N)}, \bar \mu^{(N+1)})\leq \frac{C_3}{N},\quad d_{M}(\bar m^{(N)}, \bar m^{(N+1)})\leq \frac{C_3}{N}.$$

\item There exists a constant $C_4\geq 0$ (independent of $N$) such that for all $t\in [0, T]$,
$$\int_{[0, T]\times \bar{\mathcal{O}}}(1+|x|^p)|\bar \mu^{(N+1)}-\bar\mu^{(N)}|(dt, dx)\leq \frac{C_4}{N},\quad \int_{\bar{\mathcal{O}}}(1+|x|^p)|\bar m^{(N+1)}_t-\bar m^{(N)}_t|(dx)\leq \frac{C_4}{N}.$$
\end{enumerate}
\end{lemma}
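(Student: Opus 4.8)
The plan is to exploit the telescoping identity $(\bar\mu^{(N+1)},\bar m^{(N+1)})-(\bar\mu^{(N)},\bar m^{(N)})=\frac{1}{N+1}\big((\mu^{(N+1)},m^{(N+1)})-(\bar\mu^{(N)},\bar m^{(N)})\big)$, which follows directly from the update rule, together with the a priori bounds from Lemma~\ref{estimates R OS} that hold \emph{uniformly} over all elements of $\mathcal{R}$. Since $(\mu^{(N+1)},m^{(N+1)})=\Theta(\bar\mu^{(N)},\bar m^{(N)})\in\mathcal{R}$ and $(\bar\mu^{(N)},\bar m^{(N)})$ is a convex combination of elements of $\mathcal{R}$ (hence its $x$-moments of order $q$ are bounded by the same $C_1$, by convexity of $m\mapsto\int|x|^q m(dx)$), both measures have $q$-th moments bounded by $C_1$, and in particular finite first moments bounded uniformly in $N$.

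For part (i): by the update rule and the triangle inequality for $W_1$ (using its convexity/translation structure, or directly the fact that $W_1(\lambda\nu_1+(1-\lambda)\nu_2,\lambda\nu_1+(1-\lambda)\nu_3)\le(1-\lambda)W_1(\nu_2,\nu_3)$ for $W_1$ on a fixed total mass, which here is genuine probability measures so this is standard),
\[
W_1(\bar\mu^{(N)},\bar\mu^{(N+1)})=W_1\Big(\bar\mu^{(N)},\tfrac{N}{N+1}\bar\mu^{(N)}+\tfrac{1}{N+1}\mu^{(N+1)}\Big)\le \tfrac{1}{N+1}W_1(\bar\mu^{(N)},\mu^{(N+1)}).
\]
The right-hand side is bounded using the Kantorovich–Rubinstein duality and the uniform moment bound: $W_1(\nu,\nu')\le \int|x-x_0|\,\nu(dx)+\int|x-x_0|\,\nu'(dx)$ for probability measures, which is $\le 2(C_1^{1/q}+|x_0|)=:C_3$ for any two elements of $\mathcal R$. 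For the $m$-component, $d_M(\bar m^{(N)},\bar m^{(N+1)})=\int_0^T W_1'(\bar m^{(N)}_t,\bar m^{(N+1)}_t)\,dt$; here I use that $W_1'$ (by the Kantorovich-type duality recalled just before Proposition~\ref{distance tau_1}) also satisfies $W_1'(\lambda\nu_1+(1-\lambda)\nu_2,\lambda\nu_1+(1-\lambda)\nu_3)\le(1-\lambda)W_1'(\nu_2,\nu_3)$, since both the supremum over $\mathrm{Lip}_1(\bar{\mathcal O},x_0)$ term and the total-mass-difference term are themselves seminorms on signed measures; applying this $t$-a.e.\ and then the uniform bound $W_1'(m_t,\mathbf 0)\le C+|x_0|+1$ from the proof of Proposition~\ref{distance tau_1} gives $d_M(\bar m^{(N)},\bar m^{(N+1)})\le\frac{1}{N+1}\int_0^T W_1'(\bar m^{(N)}_t,m^{(N+1)}_t)\,dt\le \frac{C_3}{N}$ after enlarging the constant.

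For part (ii): again by the update rule, $|\bar\mu^{(N+1)}-\bar\mu^{(N)}|=\frac{1}{N+1}|\mu^{(N+1)}-\bar\mu^{(N)}|\le\frac{1}{N+1}(\mu^{(N+1)}+\bar\mu^{(N)})$ as positive measures, so
\[
\int_{[0,T]\times\bar{\mathcal O}}(1+|x|^p)|\bar\mu^{(N+1)}-\bar\mu^{(N)}|(dt,dx)\le\frac{1}{N+1}\int(1+|x|^p)(\mu^{(N+1)}+\bar\mu^{(N)})(dt,dx),
\]
and the integral on the right is bounded by $2(1+C_1)$ using the moment estimate of Lemma~\ref{estimates R OS}(1) (valid for $\bar\mu^{(N)}$ by convexity). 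The same argument with $|\bar m^{(N+1)}_t-\bar m^{(N)}_t|\le\frac{1}{N+1}(m^{(N+1)}_t+\bar m^{(N)}_t)$ handles the $m$-component, uniformly in $t$. Setting $C_4:=2(1+C_1)$ (or the maximum of the constants arising) completes the proof. The only mild subtlety — the main thing to check carefully — is the convexity/contraction property of $W_1$ and $W_1'$ under taking a common convex combination; for $W_1'$ this is most cleanly seen from the dual formula, where each of the two terms is a convex, positively homogeneous functional of $m-m'$ vanishing when $m=m'$, so it is subadditive and scales correctly.
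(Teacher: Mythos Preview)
Your proof is correct and follows essentially the same approach as the paper: both arguments rest on the update identity $(\bar\mu^{(N+1)},\bar m^{(N+1)})-(\bar\mu^{(N)},\bar m^{(N)})=\frac{1}{N+1}\big((\mu^{(N+1)},m^{(N+1)})-(\bar\mu^{(N)},\bar m^{(N)})\big)$ combined with the uniform $q$-th moment bounds of Lemma~\ref{estimates R OS}(1), and part (ii) is handled identically via $|\bar\mu^{(N+1)}-\bar\mu^{(N)}|\le\frac{1}{N+1}(\mu^{(N+1)}+\bar\mu^{(N)})$. The only cosmetic difference is in part (i): the paper tests the difference directly against a Lipschitz function $\varphi$ (shifting by $\varphi(0,x_0)$ to exploit the moment bound), whereas you first invoke the convexity/seminorm structure of $W_1$ and $W_1'$ to factor out $\frac{1}{N+1}$ and then bound the remaining distance by the ``diameter'' of $\mathcal R$ --- these are two phrasings of the same computation.
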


\begin{proof}

\begin{enumerate}[(i)]
\item Note that $\bar \mu^{(N+1)} - \bar \mu^{(N)} = \frac{1}{N+1}\left[\mu^{(N+1)} -\bar \mu^{(N)}\right]$. Let $\varphi\in \text{Lip}_1([0, T]\times \bar{\mathcal{O}})$, i.e. a $1$-Lipschitz function. Using Lemma \ref{estimates R OS} (i),
\begin{align*}
\langle \varphi,\bar\mu^{(N+1)}-\bar\mu^{(N)}\rangle &= \frac{1}{N+1} \langle \varphi, \mu^{(N+1)}-\bar\mu^{(N)}\rangle = \frac{1}{N+1} \langle \varphi - \varphi(0, x_0), \mu^{(N+1)}-\bar\mu^{(N)}\rangle\\
&\leq \frac{1}{N+1}\langle |\varphi - \varphi(0, x_0)|, \mu^{(N+1)}+\bar\mu^{(N)}\rangle\\
&\leq \frac{C}{N},
\end{align*}
for some constant $C\geq 0$ independent from $N$ and $\varphi$. Taking the supremum over $\varphi$ we obtain the result. 

Analogously, $\bar m^{(N+1)} - \bar m^{(N)} = \frac{1}{N+1}\left[m^{(N+1)} -\bar m^{(N)}\right]$. Let $\varphi\in \text{Lip}_1(\bar{\mathcal{O}}, x_0)$, i.e. a $1$-Lipschitz function with $\varphi(x_0)=0$. Using again Lemma \ref{estimates R OS} (i), for all $t\in [0, T]$,
\begin{align*}
\langle \varphi,\bar m^{(N+1)}_t-\bar m^{(N)}_t\rangle &= \frac{1}{N+1} \langle \varphi, m^{(N+1)}_t-\bar m^{(N)}_t\rangle \\
&\leq \frac{1}{N+1}\langle |\varphi|, m^{(N+1)}_t+\bar m^{(N)}_t\rangle\\
&\leq \frac{C}{N},
\end{align*}
for some constant $C\geq 0$ independent from $N$, $\varphi$ and $t$. Taking the supremum over $\varphi$ and integrating over $t$ we get the claimed estimate.

\item By Lemma \ref{estimates R OS}, we get for all $t\in [0, T]$:
\begin{align*}
\int_{\bar{\mathcal{O}}}(1+|x|^p)|\bar m^{(N+1)}_t-\bar m^{(N)}_t|(dx)\leq \frac{1}{N+1}\int_{\bar{\mathcal{O}}}(1+|x|^p)(m^{(N+1)}_t+\bar m^{(N)}_t)(dx)\leq \frac{C}{N}. 
\end{align*}
Similarly, by Lemma \ref{estimates R OS}, we get
$\int_{[0, T]\times \bar{\mathcal{O}}}(1+|x|^p)|\bar \mu^{(N+1)}-\bar\mu^{(N)}|(dt, dx) \leq\frac{C}{N}$.
\end{enumerate}
\end{proof}

\noindent We establish below the following estimates on the reward map.

\begin{lemma}[\textit{Estimates on the reward map}]\label{lemma lipschitz OS}
There exist constants $C_f$ and $C_g$ such that for all $N\geq 1$
$$
\langle f(\bar m^{(N+1)}) - f(\bar m^{(N)}),m^{(N+2)}-m^{(N+1)}\rangle 
\leq \frac{C_f}{N}d_M(m^{(N+1)}, m^{(N+2)}),
$$
$$
\langle g(\bar \mu^{(N+1)}) - g(\bar \mu^{(N)}),\mu^{(N+2)}-\mu^{(N+1)}\rangle \leq \frac{C_g}{N}W_1(\mu^{(N+1)}, \mu^{(N+2)}).
$$
\end{lemma}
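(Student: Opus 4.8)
The plan is to reduce everything to the Kantorovich–Rubinstein duality formulas for $W_1'$ and $W_1$ recalled before Proposition \ref{distance tau_1}, combined with the Lipschitz-type bounds of Assumption \ref{main assump OS}(3) and the $O(1/N)$ estimates of Lemma \ref{R estimates OS}(ii). Writing
$$\langle f(\bar m^{(N+1)}) - f(\bar m^{(N)}), m^{(N+2)}-m^{(N+1)}\rangle = \int_0^T \left(\int_{\bar{\mathcal{O}}} \psi^{(N)}_t(x)\,(m^{(N+2)}_t - m^{(N+1)}_t)(dx)\right)dt,$$
with $\psi^{(N)}_t(x):= f(t,x,\bar m^{(N+1)}_t) - f(t,x,\bar m^{(N)}_t)$ (which is measurable in $t$ and continuous in $x$ by Assumption \ref{assump existence OS}(3), hence a legitimate test function), the $f$-estimate reduces to showing that, for $t$-a.e. $t$, the inner integral is at most $\tfrac{C_f}{N}W_1'(m^{(N+1)}_t, m^{(N+2)}_t)$; integrating in $t$ then yields $\tfrac{C_f}{N}d_M(m^{(N+1)}, m^{(N+2)})$ by the definition \eqref{distance measure} of $d_M$.

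For the inner integral I would split $\psi^{(N)}_t(x) = (\psi^{(N)}_t(x) - \psi^{(N)}_t(x_0)) + \psi^{(N)}_t(x_0)$ around the fixed reference point $x_0\in\bar{\mathcal{O}}$. The first piece vanishes at $x_0$ and, by the second inequality of Assumption \ref{main assump OS}(3) with $x'=x_0$, $m=\bar m^{(N+1)}_t$, $m'=\bar m^{(N)}_t$, is Lipschitz in $x$ with constant $c_f\int_{\bar{\mathcal{O}}}(1+|z|^p)|\bar m^{(N+1)}_t-\bar m^{(N)}_t|(dz)\le c_f C_4/N$ by Lemma \ref{R estimates OS}(ii); hence, by the $W_1'$-duality, its integral against $m^{(N+2)}_t - m^{(N+1)}_t$ is at most $\tfrac{c_f C_4}{N}W_1'(m^{(N+1)}_t, m^{(N+2)}_t)$. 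The constant piece $\psi^{(N)}_t(x_0)$ pairs with $m^{(N+2)}_t(\bar{\mathcal{O}}) - m^{(N+1)}_t(\bar{\mathcal{O}})$, whose absolute value is also bounded by $W_1'(m^{(N+1)}_t, m^{(N+2)}_t)$ (take $\phi\equiv 0$ in the duality), while the first inequality of Assumption \ref{main assump OS}(3) with $x=x_0$ together with Lemma \ref{R estimates OS}(ii) gives $|\psi^{(N)}_t(x_0)|\le c_f(1+|x_0|)C_4/N$. Summing, the inner integral is $\le \tfrac{c_f C_4(2+|x_0|)}{N}W_1'(m^{(N+1)}_t, m^{(N+2)}_t)$, and one sets $C_f:=c_f C_4(2+|x_0|)$.

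For the $g$-term the argument is the same but simpler, because $\mu^{(N+1)}$ and $\mu^{(N+2)}$ are probability measures, so $\int_{[0,T]\times\bar{\mathcal{O}}}(\mu^{(N+2)}-\mu^{(N+1)})=0$ and one may subtract from $(t,x)\mapsto g(t,x,\bar\mu^{(N+1)})-g(t,x,\bar\mu^{(N)})$ its value at $(0,x_0)$ without changing the integral. By the fourth inequality of Assumption \ref{main assump OS}(3) and Lemma \ref{R estimates OS}(ii), the resulting function is Lipschitz on $[0,T]\times\bar{\mathcal{O}}$ with constant $c_g\int(1+|z|^p)|\bar\mu^{(N+1)}-\bar\mu^{(N)}|(ds,dz)\le c_g C_4/N$, so ordinary Kantorovich duality gives the bound $\tfrac{c_g C_4}{N}W_1(\mu^{(N+1)},\mu^{(N+2)})$, i.e. $C_g:=c_g C_4$; here no total-mass term and no use of the first inequality of Assumption \ref{main assump OS}(3) are needed. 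I do not expect a genuine obstacle: the only care required is the bookkeeping with the subprobability total-mass terms in the $W_1'$-duality — pairing the constant part of the test function with the mass difference rather than double-counting it — and checking that the integrands fall into the function class for which the duality applies, both of which are routine.
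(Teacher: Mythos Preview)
Your proposal is correct and follows essentially the same route as the paper's proof: for the $f$-term you define $\psi^{(N)}_t(x)=f(t,x,\bar m^{(N+1)}_t)-f(t,x,\bar m^{(N)}_t)$, split off its value at $x_0$, control the Lipschitz part via Assumption~\ref{main assump OS}(3) and Lemma~\ref{R estimates OS}(ii), and pair the constant part with the total-mass difference absorbed in $W_1'$; for the $g$-term you use that the $\mu^{(N)}$ are probabilities to reduce directly to Kantorovich duality. The paper does exactly this (with $\varphi_N$, $\psi_N$ in place of your $\psi^{(N)}_t$), so there is nothing to add.
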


\begin{proof}
Let us first show that the function $\varphi_N:[0, T]\times \bar{\mathcal{O}}\rightarrow \mathbb R$ defined by 
$$\varphi_N(t, x):=f(t, x, \bar m^{(N+1)}_t)-f(t, x, \bar m^{(N)}_t)$$
is a $C/N$-Lipschitz continuous function in $x$ uniformly on $t$, for some constant $C \geq 0$. Indeed, by item (3) in Assumption \ref{main assump OS} and Lemma \ref{R estimates OS}, for each $t\in [0, T]$ and $x, x'\in \bar{\mathcal{O}}$, we have
\begin{align*}
|\varphi_N(t, x)-\varphi_N(t, x')|&=|f(t, x, \bar m^{(N+1)}_t)-f(t, x, \bar m^{(N)}_t) - f(t, x', \bar m^{(N+1)}_t) + f(t, x', \bar m^{(N)}_t)| \\
& \leq c_f|x-x'|\int_{\bar{\mathcal{O}}}(1+|z|^p) |\bar m^{(N+1)}_t-  \bar m^{(N)}_t|(dz) \leq \frac{C}{N}|x-x'|.
\end{align*}
The same holds for the function $(t, x)\mapsto \varphi_N(t, x) - \varphi_N(t, x_0)$, which is equal to $0$ at $x_0$. By definition of $W_1'$,
$$W_1'(m^{(N+2)}_t, m^{(N+1)}_t)=\sup_{\phi\in \text{Lip}_1(\bar{\mathcal{O}}, x_0)}\int_{\bar{\mathcal{O}}}\phi(x)(m^{(N+2)}_t-m^{(N+1)}_t)(dx) + |m^{(N+2)}_t(\bar{\mathcal{O}})- m^{(N+1)}_t(\bar{\mathcal{O}})|.$$
Furthermore, by item (3) from Assumption \ref{main assump OS} and Lemma \ref{R estimates OS}, we get that for $t\in [0, T]$,
$$|\varphi_N(t, x_0)|=|f(t, x_0, \bar m^{(N+1)}_t)-f(t, x_0, \bar m^{(N)}_t)|\leq c_f(1+|x_0|) \int_{\bar{\mathcal{O}}}(1+|z|^p) |\bar m^{(N+1)}_t-  \bar m^{(N)}_t|(dz)\leq \frac{C'}{N}.$$
We derive 
\begin{align*}
&\langle f(\bar m^{(N+1)}) - f(\bar m^{(N)}),m^{(N+2)}-m^{(N+1)}\rangle\\
& \quad =  \int_0^T\int_{\bar{\mathcal{O}}}(\varphi_N(t, x)-\varphi_N(t, x_0))(m^{(N+2)}_t-m^{(N+1)}_t)(dx)dt \\
&\quad \quad + \int_0^T\int_{\bar{\mathcal{O}}}\varphi_N(t, x_0)(m^{(N+2)}_t-m^{(N+1)}_t)(dx)dt \\
&\quad \leq \frac{C}{N} \int_0^TW_1'(m^{(N+2)}_t, m^{(N+1)}_t)dt + \frac{C'}{N}\int_0^TW_1'(m^{(N+2)}_t, m^{(N+1)}_t)dt\\
&\quad \leq \frac{C +C'}{N}d_M(m^{(N+2)}, m^{(N+1)}),
\end{align*}
where the last inequality follows by definition of the metric $d_M$ given by \eqref{distance measure}.

\noindent Let us show the second estimate. To this purpose, we consider the function $\psi_N:[0, T]\times \bar{\mathcal{O}}\rightarrow\mathbb R$ defined by 
$$\psi_N(t, x):=g(t, x, \bar \mu^{(N+1)}) - g(t, x, \bar \mu^{(N)})$$ and show that it 
is a $C''/N$-Lipschitz continuous function, for some constant $C''\geq 0$. Indeed, 
by item (3) in Assumption \ref{main assump OS} and Lemma \ref{R estimates OS}, for each $t, t'\in [0, T]$, $x, x'\in \bar{\mathcal{O}}$, we have
\begin{align*}
|\psi_N(t, x)-\psi_N(t', x')|&=|g(t, x, \bar \mu^{(N+1)}) - g(t, x, \bar \mu^{(N)}) - g(t', x', \bar \mu^{(N+1)}) + g(t', x', \bar \mu^{(N)})|\\
&\quad \leq c_g(|t-t'|+|x-x'|)\int_{[0, T]\times \bar{\mathcal{O}}}(1+|z|^p)|\bar \mu^{(N+1)}-\bar \mu^{(N)}|(ds, dz)\\
&\quad \leq \frac{C''}{N}(|t-t'|+|x-x'|). 
\end{align*}
By Kantorovich's duality theorem,
\begin{align*}
&\langle g(\bar \mu^{(N+1)}) - g(\bar \mu^{(N)}),\mu^{(N+2)}-\mu^{(N+1)}\rangle = \langle \psi_N ,\mu^{(N+2)}-\mu^{(N+1)}\rangle \leq \frac{C''}{N}W_1 (\mu^{(N+2)}, \mu^{(N+1)}).
\end{align*}
\end{proof}

In the following lemma, we prove the continuity of the best response map.

\begin{lemma}[\textit{Continuity of the best response map}]\label{Theta cont OS}
The function $\Theta$ is continuous on $\mathcal{R}$ with respect to all topologies listed in Corollary \ref{rho distance}.
\end{lemma}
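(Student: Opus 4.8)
The plan is to argue sequentially, since all the topologies in Corollary \ref{rho distance} coincide on $\mathcal{R}$ and are metrized by $\rho$; so let $(\bar\mu^n, \bar m^n) \to (\bar\mu, \bar m)$ in $\mathcal{R}$ and set $(\mu^n, m^n) := \Theta(\bar\mu^n, \bar m^n)$, $(\mu^\infty, m^\infty) := \Theta(\bar\mu, \bar m)$. By Theorem \ref{R compact OS} the set $\mathcal{R}$ is compact, so the sequence $(\mu^n, m^n)$ has a convergent subsequence; by a standard subsequence argument it suffices to show that \emph{every} convergent subsequence has limit $(\mu^\infty, m^\infty)$, i.e. that the limit of any convergent subsequence is the maximizer of $\Gamma[\bar\mu,\bar m]$ on $\mathcal{R}$, which is unique by Assumption \ref{main assump OS}(1). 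So, relabeling, assume $(\mu^n, m^n) \to (\mu', m')$ in $\tau_p \otimes \tilde\tau_p$ with $(\mu', m') \in \mathcal{R}$ (the limit lies in $\mathcal{R}$ by closedness, already used in the proof of Theorem \ref{R compact OS}); the goal is to prove $(\mu', m') = \Theta(\bar\mu, \bar m)$.

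The key step is a joint continuity / convergence statement for the reward functional: I claim that $\Gamma[\bar\mu^n, \bar m^n](\mu^n, m^n) \to \Gamma[\bar\mu, \bar m](\mu', m')$ and, more generally, $\Gamma[\bar\mu^n, \bar m^n](\nu, \ell) \to \Gamma[\bar\mu, \bar m](\nu, \ell)$ for fixed $(\nu, \ell) \in \mathcal{R}$. For the $g$-term, $\langle g(\bar\mu^n), \mu^n\rangle - \langle g(\bar\mu), \mu'\rangle$ is split as $\langle g(\bar\mu^n) - g(\bar\mu), \mu^n\rangle + \langle g(\bar\mu), \mu^n - \mu'\rangle$; the first term is controlled by Assumption \ref{main assump OS}(3) together with the uniform $q$-moment bound of Lemma \ref{estimates R OS}(1) (so the $(1+|z|^p)$-weight is integrable against $\mu^n$ uniformly, using $q>p$) and $W_1(\bar\mu^n,\bar\mu)\to 0$, while the second vanishes because $g(\cdot,\cdot,\bar\mu)$ is continuous with $p$-growth and $\mu^n \to \mu'$ in $\tau_p$. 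For the $f$-term, $\langle f(\bar m^n), m^n\rangle - \langle f(\bar m), m'\rangle$ splits as $\langle f(\bar m^n) - f(\bar m), m^n\rangle + \langle f(\bar m), m^n - m'\rangle$; the first term is handled by the first inequality of Assumption \ref{main assump OS}(3), the uniform moment bound, and $d_M(\bar m^n, \bar m)\to 0$ (equivalently $\int_0^T W_1'(\bar m^n_t, \bar m_t)\,dt \to 0$); the second term vanishes by Lemma \ref{lemma stable OS} (stable convergence with polynomial growth), since $f$ is continuous in $x$, measurable in $t$, with $p$-growth, and $m^n \to m'$ in $\tilde\tau_p$ implies convergence in $\bar\tau_p$ on $\mathcal{R}$ — here I also need the uniform moment bound to bound the contribution of $f(\bar m_t, \cdot)$ by a fixed integrable-in-$t$ function and use $q>p$ to make the tails uniformly small (a standard truncation argument upgrading $\bar\tau_p$-type convergence against $p$-growth integrands).

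With this continuity in hand, the optimality passes to the limit: for any $(\nu, \ell) \in \mathcal{R}$, the defining inequality $\Gamma[\bar\mu^n, \bar m^n](\nu, \ell) \le \Gamma[\bar\mu^n, \bar m^n](\mu^n, m^n)$ gives, letting $n \to \infty$, $\Gamma[\bar\mu, \bar m](\nu, \ell) \le \Gamma[\bar\mu, \bar m](\mu', m')$. Hence $(\mu', m')$ maximizes $\Gamma[\bar\mu, \bar m]$ on $\mathcal{R}$, and by uniqueness (Assumption \ref{main assump OS}(1)) $(\mu', m') = \Theta(\bar\mu, \bar m)$. Since this holds along every convergent subsequence and $\mathcal{R}$ is compact metrizable, the whole sequence $\Theta(\bar\mu^n, \bar m^n)$ converges to $\Theta(\bar\mu, \bar m)$, which is the desired continuity. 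The main obstacle is the $f$-term estimate $\langle f(\bar m), m^n - m'\rangle \to 0$: one must carefully combine the $\bar\tau_p$-convergence of $m^n$ (only continuity in $x$, mere measurability in $t$) with the uniform $q$-moment bounds to control the $p$-growth of $f$; this is exactly where the choice of the convergence-in-measure topology and the strict inequality $q > p$ are used, and it is the only place where a genuine (though routine) truncation argument is needed rather than a direct appeal to an already-stated lemma.
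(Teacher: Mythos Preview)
Your overall architecture matches the paper's: sequential compactness of $\mathcal{R}$, extract a convergent subsequence of best responses, pass to the limit in the optimality inequality, and invoke uniqueness of the maximizer. The paper does exactly this.

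However, there is a gap in how you handle the ``first terms'' of your splits. You claim $\langle g(\bar\mu^n)-g(\bar\mu),\mu^n\rangle$ is controlled by Assumption~\ref{main assump OS}(3) together with $W_1(\bar\mu^n,\bar\mu)\to0$. But the bound in Assumption~\ref{main assump OS}(3) reads
\[
|g(t,x,\mu)-g(t,x,\mu')|\le c_g(1+|x|)\int_{[0,T]\times\bar{\mathcal O}}(1+|z|^p)\,|\mu-\mu'|(ds,dz),
\]
where $|\mu-\mu'|$ is the \emph{total variation} measure. Weak (or $\tau_p$, or $W_1$) convergence of $\bar\mu^n$ to $\bar\mu$ does not force $\int(1+|z|^p)|\bar\mu^n-\bar\mu|(ds,dz)\to0$; this quantity need not tend to zero at all. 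The same issue arises for $\langle f(\bar m^n)-f(\bar m),m^n\rangle$. In the paper these total-variation estimates are only ever applied to the algorithmic increments $\bar\mu^{(N+1)}-\bar\mu^{(N)}=\tfrac1{N+1}(\mu^{(N+1)}-\bar\mu^{(N)})$, where the $1/(N{+}1)$ prefactor gives the smallness for free (Lemma~\ref{R estimates OS}); they are not strong enough for a generic convergent sequence in $\mathcal{R}$.

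The paper sidesteps this by not splitting: it passes to the limit in $\Gamma[\bar\mu^n,\bar m^n](\mu^n,m^n)$ and in $\Gamma[\bar\mu^n,\bar m^n](\tilde\mu,\tilde m)$ directly via the Slutsky-type Lemmas~\ref{slutsky_p} and~\ref{slutsky_stable_p}. These require only the joint continuity and $p$-growth of $g$ and $f$ from Assumption~\ref{assump existence OS}(3), the uniform moment bounds of Lemma~\ref{estimates R OS}(1), and (for the $f$-term) that along a further subsequence $\bar m^n_t\to\bar m_t$ $t$-a.e.\ in $\mathcal P_p^{sub}(\bar{\mathcal O})$, which follows from $\tilde\tau_p$-convergence. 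No Lipschitz hypothesis from Assumption~\ref{main assump OS}(3) is used for this lemma. Your ``second terms'' are handled correctly (and indeed need no ad hoc truncation: the $p$-growth of $x\mapsto f(t,x,\bar m_t)$ is uniform in $t$ thanks to the moment bound, so $\bar\tau_p$-convergence applies directly); it is only the first terms that fail as written.
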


\begin{proof}
Let $(\bar \mu^n, \bar m^n)_{n\geq 1}\subset \mathcal{R}$ be a sequence converging to $(\bar \mu, \bar m)\in \mathcal{R}$. Define $(\mu^n, m^n):=\Theta(\bar \mu^n, \bar m^n)\in\mathcal{R}$ and let $(\mu, m)\in \mathcal{R}$ be a cluster point of the sequence $(\mu^n, m^n)_{n\geq 1}$ (which exists since $\mathcal{R}$ is compact). Up to taking a subsequence, we assume that the entire sequence converges to $(\mu, m)$. Let $(\tilde\mu, \tilde m)\in \mathcal{R}$, we have to show that $\Gamma[\bar \mu, \bar m](\tilde\mu, \tilde m)\leq \Gamma[\bar \mu, \bar m](\mu, m)$. By definition of $\Theta$, $\Gamma[\bar \mu^n, \bar m^n](\tilde\mu, \tilde m)\leq \Gamma[\bar \mu^n, \bar m^n](\mu^n, m^n)$. Taking the limit in the above inequality as $n\rightarrow\infty$ (by an intermediary application of Lemmas \ref{slutsky_p} and \ref{slutsky_stable_p}), we obtain $\Gamma[\bar \mu, \bar m](\tilde\mu, \tilde m)\leq \Gamma[\bar \mu, \bar m](\mu, m)$, which, by uniqueness of the best response, shows that $(\mu, m)=\Theta(\bar \mu, \bar m)$.
\end{proof}

Using Lemma \ref{R estimates OS} and Lemma \ref{Theta cont OS}, we derive the following result.

\begin{corollary}[\textit{Proximity between two successive best responses}]\label{unif continuous OS}
We have
$$\lim_{N\rightarrow \infty} W_1(\mu^{(N)}, \mu^{(N+1)})=0 \quad \text{and} \quad \lim_{N\rightarrow \infty} d_M(m^{(N)}, m^{(N+1)}) = 0.$$
\end{corollary}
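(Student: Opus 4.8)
The plan is to exploit the fact that $\Theta$ is continuous (Lemma~\ref{Theta cont OS}) together with the compactness of $\mathcal{R}$ (Theorem~\ref{R compact OS}) to upgrade continuity to \emph{uniform} continuity, and then to combine this with the vanishing increments of the averaged sequence from Lemma~\ref{R estimates OS}. Concretely, since $(\mathcal{R},\tau_p\otimes\tilde\tau_p)$ is a compact metric space (metrized by $\rho$ via Proposition~\ref{distance tau_1}) and $\Theta:\mathcal{R}\to\mathcal{R}$ is continuous, $\Theta$ is uniformly continuous with respect to $\rho$: for every $\varepsilon>0$ there is $\delta>0$ such that $\rho((\bar\mu,\bar m),(\bar\mu',\bar m'))<\delta$ implies $\rho(\Theta(\bar\mu,\bar m),\Theta(\bar\mu',\bar m'))<\varepsilon$.

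Next I would apply this to the averaged iterates. By Lemma~\ref{R estimates OS}(i),
$$
\rho\big((\bar\mu^{(N)},\bar m^{(N)}),(\bar\mu^{(N+1)},\bar m^{(N+1)})\big)
= W_1(\bar\mu^{(N)},\bar\mu^{(N+1)}) + d_M(\bar m^{(N)},\bar m^{(N+1)})
\leq \frac{2C_3}{N}\xrightarrow[N\to\infty]{}0.
$$
Hence for $N$ large enough the pair $(\bar\mu^{(N)},\bar m^{(N)})$ and $(\bar\mu^{(N+1)},\bar m^{(N+1)})$ are within $\delta$ of each other, so by uniform continuity of $\Theta$,
$$
\rho\big(\Theta(\bar\mu^{(N)},\bar m^{(N)}),\Theta(\bar\mu^{(N+1)},\bar m^{(N+1)})\big)<\varepsilon
$$
for all large $N$. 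Since $\Theta(\bar\mu^{(N)},\bar m^{(N)})=(\mu^{(N+1)},m^{(N+1)})$ and $\Theta(\bar\mu^{(N+1)},\bar m^{(N+1)})=(\mu^{(N+2)},m^{(N+2)})$ by the algorithm's definition, this says exactly that $\rho((\mu^{(N+1)},m^{(N+1)}),(\mu^{(N+2)},m^{(N+2)}))\to 0$, i.e.
$$
W_1(\mu^{(N+1)},\mu^{(N+2)})\to 0\quad\text{and}\quad d_M(m^{(N+1)},m^{(N+2)})\to 0,
$$
which is the claim after reindexing $N\mapsto N-1$.

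The only genuine content is the passage from continuity to uniform continuity, and this is immediate because $\mathcal{R}$ is compact metric (Theorem~\ref{R compact OS} plus Proposition~\ref{distance tau_1}); the rest is just plugging in the $O(1/N)$ bound of Lemma~\ref{R estimates OS}(i). I do not anticipate a serious obstacle here; one small point to be careful about is that $\rho$ must genuinely metrize the topology on \emph{all} of $\mathcal{R}$ (not merely be a metric bounded by it), which is guaranteed by Proposition~\ref{distance tau_1}, so that $\rho$-proximity is equivalent to topological proximity and uniform continuity of $\Theta$ in the metric sense is available. Note also that Lemma~\ref{lemma lipschitz OS} is not needed for this particular corollary — it will presumably be used later to control the exploitability — so the proof here is short and relies solely on Lemmas~\ref{R estimates OS} and~\ref{Theta cont OS} as stated.
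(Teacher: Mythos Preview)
Your proposal is correct and follows essentially the same route as the paper: uniform continuity of $\Theta$ on the compact metric space $(\mathcal{R},\rho)$ (from Theorem~\ref{R compact OS}, Proposition~\ref{distance tau_1}, Lemma~\ref{Theta cont OS}), combined with $\rho((\bar\mu^{(N)},\bar m^{(N)}),(\bar\mu^{(N+1)},\bar m^{(N+1)}))\to 0$ via Lemma~\ref{R estimates OS}(i), then applying $\Theta$ and reindexing. Your remark that Lemma~\ref{lemma lipschitz OS} is not needed here is also accurate.
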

\begin{proof}
Recall the metric $\rho$ on $\mathcal{R}$ defined in Proposition \ref{distance tau_1}. Viewing $\Theta$ as a function between the metric spaces $(\mathcal{R}, \rho)$ and $(\mathcal{R}, \rho)$, it is uniformly continuous since it is continuous by Lemma \ref{Theta cont OS} and $(\mathcal{R}, \rho)$ is a compact metric space by Theorem \ref{R compact OS}. By Lemma \ref{R estimates OS} (i),
$$\rho((\bar \mu^{(N)}, \bar m^{(N)}), (\bar \mu^{(N+1)}, \bar m^{(N+1)}))\underset{N\rightarrow\infty}{\longrightarrow}0.$$
We get by the sequential characterization of the uniform continuity that
$$\rho((\mu^{(N+1)}, m^{(N+1)}), (\mu^{(N+2)}, m^{(N+2)})) =\rho(\Theta(\bar \mu^{(N)}, \bar m^{(N)}), \Theta(\bar \mu^{(N+1)}, \bar m^{(N+1)}))\underset{N\rightarrow\infty}{\longrightarrow}0.$$
\end{proof}

\subsubsection{Main convergence result}\label{sec main theo}

In this section, we prove the convergence of the algorithm. To do so, we first introduce the following sequence of real numbers $\varepsilon_N$, which quantifies how far $(\bar \mu^{(N)}, \bar m^{(N)})$ is from being the best response when the reward maps depend on $(\bar \mu^{(N)}, \bar m^{(N)})$. Therefore, $\varepsilon_N$ quantifies the proximity of $(\bar \mu^{(N)}, \bar m^{(N)})$  from an \textit{LP MFG Nash equilibrium}.

\begin{definition}[\textit{Exploitability}]
We define the sequence of real numbers $(\varepsilon_N)_{N\geq 1}$ by
\begin{equation}\label{phi n OS}
\varepsilon_N=\langle f(\bar m^{(N)}),m^{(N+1)}-\bar m^{(N)}\rangle 
+ \langle g(\bar \mu^{(N)}),\mu^{(N+1)}-\bar \mu^{(N)}\rangle\geq 0.    
\end{equation}
In particular, $(\bar \mu^{(N)}, \bar m^{(N)})$ is an \textit{$\varepsilon_N$-LP MFG Nash equilibrium} and we will show in the next theorem that $\varepsilon_N\rightarrow 0$ as $N\rightarrow \infty$.
\end{definition}

\begin{proof}[Proof of Theorem \ref{main theo OS}]
In the proof we will denote by $C\geq 0$ a generic constant which may change from line to line. Recall the expression of $\varepsilon_N$ from \eqref{phi n OS}. We can rewrite $\varepsilon_N$ as
$$\varepsilon_N=\Gamma[\bar \mu^{(N)}, \bar m^{(N)}](\mu^{(N+1)}, m^{(N+1)}) - \Gamma[\bar \mu^{(N)}, \bar m^{(N)}](\bar \mu^{(N)}, \bar m^{(N)}).$$
Now we have
\begin{multline*}
\varepsilon_{N+1}-\varepsilon_N= \Gamma[\bar \mu^{(N+1)}, \bar m^{(N+1)}](\mu^{(N+2)}, m^{(N+2)}) - \Gamma[\bar \mu^{(N+1)}, \bar m^{(N+1)}](\bar \mu^{(N+1)}, \bar m^{(N+1)}) \\- \Gamma[\bar \mu^{(N)}, \bar m^{(N)}](\mu^{(N+1)}, m^{(N+1)}) + \Gamma[\bar \mu^{(N)}, \bar m^{(N)}](\bar \mu^{(N)}, \bar m^{(N)}).
\end{multline*}
Define
$$\varepsilon_N^{(1)}:= \Gamma[\bar \mu^{(N)}, \bar m^{(N)}](\bar \mu^{(N)}, \bar m^{(N)}) - \Gamma[\bar \mu^{(N+1)}, \bar m^{(N+1)}](\bar \mu^{(N+1)}, \bar m^{(N+1)}),$$
$$\varepsilon_N^{(2)}:= \Gamma[\bar \mu^{(N+1)}, \bar m^{(N+1)}](\mu^{(N+2)}, m^{(N+2)}) - \Gamma[\bar \mu^{(N)}, \bar m^{(N)}](\mu^{(N+1)}, m^{(N+1)}).$$
Then $\varepsilon_{N+1}-\varepsilon_N = \varepsilon_N^{(1)} +\varepsilon_N^{(2)}$. Let us make some estimates of these two quantities.
Using Lemma \ref{R estimates OS} (ii), for each $t\in [0, T]$ and $x\in \bar{\mathcal{O}}$ we obtain
$$|f(t, x, \bar m_t^{(N+1)}) - f(t, x, \bar m_t^{(N)})|\leq c_f(1+|x|) \int_{\bar{\mathcal{O}}}(1+|z|^p)|\bar m_t^{(N+1)}-\bar m_t^{(N)}|(dz)\leq \frac{C}{N}(1+|x|),$$
and therefore we get by Lemma \ref{estimates R OS} (i)
\begin{align*}
-\frac{1}{N+1}\langle f(\bar m^{(N+1)})-f(\bar m^{(N)}), m^{(N+1)}-\bar m^{(N)} \rangle &\leq \frac{1}{N+1}\langle |f(\bar m^{(N+1)})-f(\bar m^{(N)})|, m^{(N+1)}+\bar m^{(N)} \rangle\\
& \leq \frac{C}{N^2}.
\end{align*}
We deduce that
\begin{align*}
&\langle f(\bar m^{(N)}), \bar m^{(N)} \rangle - \langle f(\bar m^{(N+1)}), \bar m^{(N+1)}\rangle = \langle f(\bar m^{(N)}), \bar m^{(N)} \rangle  \\
&\quad - \langle f(\bar m^{(N+1)}), \bar m^{(N)} + \frac{1}{N+1}(m^{(N+1)}-\bar m^{(N)}) \rangle \\
& = \langle f(\bar m^{(N)}) - f(\bar m^{(N+1)}), \bar m^{(N)} \rangle - \frac{1}{N+1}\langle f(\bar m^{(N+1)}), m^{(N+1)}-\bar m^{(N)} \rangle\\
&\leq \langle f(\bar m^{(N)}) - f(\bar m^{(N+1)}), \bar m^{(N)} \rangle - \frac{1}{N+1}\langle f(\bar m^{(N)}), m^{(N+1)}-\bar m^{(N)} \rangle + \frac{C}{N^2}.
\end{align*}
Analogously,
\begin{align*}
&\langle g(\bar \mu^{(N)}), \bar \mu^{(N)} \rangle - \langle g(\bar \mu^{(N+1)}), \bar \mu^{(N+1)}\rangle \\
&\leq \langle g(\bar \mu^{(N)}) - g(\bar \mu^{(N+1)}), \bar \mu^{(N)} \rangle - \frac{1}{N+1}\langle g(\bar \mu^{(N)}), \mu^{(N+1)}-\bar \mu^{(N)} \rangle + \frac{C}{N^2}.
\end{align*}
Therefore,
\begin{align*}
\varepsilon_N^{(1)}  
& = \langle f(\bar m^{(N)}), \bar m^{(N)} \rangle + \langle g(\bar \mu^{(N)}), \bar \mu^{(N)} \rangle - \langle f(\bar m^{(N+1)}), \bar m^{(N+1)} \rangle - \langle g(\bar \mu^{(N+1)}), \bar \mu^{(N+1)} \rangle\\
&\leq \langle f(\bar m^{(N)}) - f(\bar m^{(N+1)}), \bar m^{(N)} \rangle + \langle g(\bar \mu^{(N)}) - g(\bar \mu^{(N+1)}), \bar \mu^{(N)} \rangle -\frac{\varepsilon_N}{N+1} +\frac{C}{N^2}.
\end{align*}
On the other hand,
\begin{align*}
\varepsilon_N^{(2)} & =\Gamma[\bar \mu^{(N+1)}, \bar m^{(N+1)}](\mu^{(N+2)}, m^{(N+2)}) - \Gamma[\bar \mu^{(N)}, \bar m^{(N)}](\mu^{(N+1)}, m^{(N+1)})\\
&\leq \Gamma[\bar \mu^{(N+1)}, \bar m^{(N+1)}](\mu^{(N+2)}, m^{(N+2)}) - \Gamma[\bar \mu^{(N)}, \bar m^{(N)}](\mu^{(N+2)}, m^{(N+2)})\\
&=\langle f(\bar m^{(N+1)}) - f(\bar m^{(N)}), m^{(N+2)}\rangle + \langle g(\bar \mu^{(N+1)}) - g(\bar \mu^{(N)}), \mu^{(N+2)}\rangle \\
&= \langle f(\bar m^{(N+1)}) - f(\bar m^{(N)}), m^{(N+1)}\rangle + \langle g(\bar \mu^{(N+1)}) - g(\bar \mu^{(N)}), \mu^{(N+1)}\rangle \\
&\quad + \langle f(\bar m^{(N+1)}) - f(\bar m^{(N)}), m^{(N+2)}-m^{(N+1)}\rangle + \langle g(\bar \mu^{(N+1)}) - g(\bar \mu^{(N)}), \mu^{(N+2)} - \mu^{(N+1)}\rangle.
\end{align*}
Now, by Lemma \ref{lemma lipschitz OS},
$$
\langle f(\bar m^{(N+1)}) - f(\bar m^{(N)}), m^{(N+2)}-m^{(N+1)}\rangle \leq \frac{C_f}{N}d_M(m^{(N+1)}, m^{(N+2)}),
$$
$$\langle g(\bar \mu^{(N+1)}) - g(\bar \mu^{(N)}), \mu^{(N+2)} - \mu^{(N+1)}\rangle\leq \frac{C_g}{N}W_1(\mu^{(N+1)}, \mu^{(N+2)}).$$
Therefore
\begin{align*}
\varepsilon_N^{(2)} &\leq \langle f(\bar m^{(N+1)}) - f(\bar m^{(N)}), m^{(N+1)}\rangle + \langle g(\bar \mu^{(N+1)}) - g(\bar \mu^{(N)}), \mu^{(N+1)}\rangle\\
&\quad +\frac{C}{N}(d_M(m^{(N+1)}, m^{(N+2)}) + W_1(\mu^{(N+1)}, \mu^{(N+2)})).
\end{align*}
Letting 
$$\delta_N=C\left[ d_M(m^{(N+1)}, m^{(N+2)}) + W_1(\mu^{(N+1)}, \mu^{(N+2)}) +  \frac{1}{N}\right],$$
we get
\begin{align*}
&\varepsilon_{N+1} - \varepsilon_N = \varepsilon_N^{(1)} + \varepsilon_N^{(2)}\leq \langle f(\bar m^{(N)}) - f(\bar m^{(N+1)}), \bar m^{(N)} \rangle + \langle g(\bar \mu^{(N)}) - g(\bar \mu^{(N+1)}), \bar \mu^{(N)} \rangle -\frac{\varepsilon_N}{N+1}\\
&\quad + \langle f(\bar m^{(N+1)}) - f(\bar m^{(N)}), m^{(N+1)} \rangle + \langle g(\bar \mu^{(N+1)}) - g(\bar \mu^{(N)}), \mu^{(N+1)} \rangle + \frac{\delta_N}{N}\\
& = \langle f(\bar m^{(N+1)}) - f(\bar m^{(N)}), m^{(N+1)} - \bar m^{(N)}\rangle + \langle g(\bar \mu^{(N+1)}) - g(\bar \mu^{(N)}), \mu^{(N+1)} - \bar \mu^{(N)} \rangle -\frac{\varepsilon_N}{N+1} + \frac{\delta_N}{N}\\
& = (N+1)\left[ \langle f(\bar m^{(N+1)}) - f(\bar m^{(N)}), \bar m^{(N+1)} - \bar m^{(N)}\rangle + \langle g(\bar \mu^{(N+1)}) - g(\bar \mu^{(N)}), \bar \mu^{(N+1)} - \bar \mu^{(N)} \rangle \right] \\
&\quad -\frac{\varepsilon_N}{N+1} + \frac{\delta_N}{N}\\
&\leq -\frac{\varepsilon_N}{N+1} + \frac{\delta_N}{N},
\end{align*}
where the last inequality comes from the Lasry–Lions monotonicity condition. Observe that $\delta_N\rightarrow 0$ by Corollary \ref{unif continuous OS}. By Lemma 3.1 in \cite{hadikhanloo2017}, we conclude that $\varepsilon_N\rightarrow 0$ as $N\rightarrow \infty$. Let $((\mu, m), (\bar \mu, \bar m))$ be a cluster point of the sequence $((\mu^{(N+1)}, m^{(N+1)}), (\bar \mu^{(N)}, \bar m^{(N)}))_{N\geq 1}$ for the topology $\tau_p\otimes \tilde \tau_p$ and let us show that $(\mu, m)=(\bar \mu, \bar m)$, which implies that $(\mu, m)$ is an LP MFG Nash equilibrium by continuity of the map $\Theta$ (see Lemma \ref{Theta cont OS}). First note that since $(\mu^{(N+1)}, m^{(N+1)})=\Theta (\bar \mu^{(N)}, \bar m^{(N)})$, and $\Theta$ is continuous, we obtain $(\mu, m)=\Theta (\bar \mu, \bar m)$. Let $(\tilde \mu, \tilde m)\in \mathcal{R}$. We have
$$\Gamma[\bar \mu^{(N)}, \bar m^{(N)}](\mu^{(N+1)}, m^{(N+1)})\geq \Gamma[\bar \mu^{(N)}, \bar m^{(N)}](\tilde \mu, \tilde m).$$
By definition of $\varepsilon_N$,
$$\Gamma[\bar \mu^{(N)}, \bar m^{(N)}](\bar \mu^{(N)}, \bar m^{(N)})\geq \Gamma[\bar \mu^{(N)}, \bar m^{(N)}](\tilde \mu, \tilde m) -\varepsilon_N.$$
Taking the limit $N\rightarrow \infty$, we obtain $\Gamma[\bar \mu, \bar m](\bar \mu, \bar m)\geq \Gamma[\bar \mu, \bar m](\tilde \mu, \tilde m)$. Since $(\tilde\mu, \tilde m)$ was arbitrary in $\mathcal{R}$, we get $(\bar \mu, \bar m)=\Theta (\bar \mu, \bar m)=(\mu, m)$, i.e. $(\bar \mu, \bar m)=(\mu, m)$ is the unique LP MFG Nash equilibrium.
\end{proof}

\begin{remark}
The proof follows some of the steps given in \cite{hadikhanloo2017}, but is based on some new results due to our setting of optimal stopping MFGs (in particular, the flow of measures is discontinuous). More precisely, one needs to establish specific estimates using appropriate distances.
\end{remark}

\section{Linear programming algorithm for MFGs with \textit{pure control} and \textit{absorption}}

In this section, we illustrate the LPFP algorithm for MFGs with pure control and absorption, its convergence following by the same approach developed in the case of optimal stopping (see Theorem \ref{main theo OS}). In the setting of MFGs with pure control and absorption, the players control their dynamics up to the exit time from a given set $\mathcal{O}$, when they leave the game. For the reader's convenience, we  keep the same notations as in the optimal stopping case with some adaptations of the definitions.\vspace{10pt}

Let $U$ be the set of flows of measures on $\bar{\mathcal{O}}\times A$, $\left(m_{t}\right)_{t\in [0, T]}$, such that: for every $t \in[0, T]$, $m_{t}$ is a Borel finite signed measure on $\bar{\mathcal{O}}\times A$, for every $B \in \mathcal{B}(\bar{\mathcal{O}}\times A)$, the mapping $t \mapsto m_{t}(B)$ is measurable, and $\int_{0}^{T} |m_{t}|(\bar{\mathcal{O}}\times A) dt < \infty$, where $|m_{t}|$ is the total variation measure of $m_{t}$. The definitions of $\tilde U$, $\tilde U_p$, $V$ and $V_p$ from the previous section are adapted in a similar way. The topology $\tau_p$ denotes, as in the previous section, the weak topology with respect to continuous functions with $p$-growth. The topology $\bar \tau_p$ stands for the stable topology in $V_p$ where test functions of $(t, x, a)$ are allowed to be only measurable with respect to $t$ and have $p$-growth.\vspace{10pt}

\noindent By the disintegration theorem, for each $(m_t)_{t\in[0,T]} \in V$, there exists a mapping $\nu_{t,x}:[0,T]\times \bar{\mathcal{O}} \to \mathcal P(A)$ such that for each $B\in \mathcal B(A)$, the function $(t, x)\mapsto \nu_{t, x}(B)$ is $\mathcal{B}([0, T]\times \bar{\mathcal{O}})$-measurable, and 
$$m_t(dx, da)dt=\nu_{t, x}(da)m_t^x(dx)dt,$$
where $m_t^x(dx):=\int_A m_t(dx, da)$. Here $\nu_{t, x}$ is interpreted as a Markovian relaxed control and in the case when $\nu_{t, x}$ is a dirac mass, it is called Markovian strict control (see \cite{dlt2021} for more details). \vspace{10pt}

\noindent We define the parabolic boundary as the set $\Sigma=([0, T)\times \partial \mathcal{O}) \cup (\{T\}\times \bar{\mathcal{O}})$. We are given constants $q> p\geq 1\vee r$, where $r\in [0, 2]$ and $q\geq 2$, and the following functions:
$$(b, \sigma):[0, T]\times \mathbb R\times A\rightarrow \mathbb R,\quad f:[0, T]\times \bar{\mathcal{O}}\times \mathcal{P}_p^{sub}(\bar{\mathcal{O}})\times A\rightarrow \mathbb R,\quad g:\Sigma \times \mathcal{P}_p(\Sigma)\rightarrow \mathbb R.$$

Consider the following assumptions, under which existence of LP MFG Nash equilibria can be shown.

\begin{assumption}\label{assump existence SC}\leavevmode
\begin{enumerate}[(1)]
\item $m_0^*\in \mathcal{P}_q(\bar{\mathcal{O}})$.

\item $\mathcal{O}$ is a bounded open interval, $\sigma$ does not depend on the control $a$ and for all $(t, x)\in [0, T]\times \mathbb R$, $\sigma^2(t, x)\geq c_\sigma$ for some $c_\sigma>0$. 

\item The functions $(t, x, a)\mapsto b(t, x, a)$ and $(t, x)\mapsto\sigma(t, x)$ are jointly measurable and continuous in $(x, a)$ and $x$ respectively, for each $t$. Moreover, there exists a constant $c_1>0$ such that for all $(t, x, y, a)\in [0, T]\times \mathbb R\times \mathbb R\times A$,
$$|b(t, x, a)-b(t, y, a)|+|\sigma(t, x)-\sigma(t, y)|\leq c_1 |x-y|,$$
$$|b(t, x, a)|\leq c_1\left[1+|x|\right],\quad \sigma^2(t, x)\leq c_1\left[ 1+|x|^r\right].$$

\item The function $(t, x, \eta, a)\mapsto f(t, x, \eta, a)$ is jointly measurable and continuous in $(x, \eta, a)$ for each $t$. The function $g$ is continuous. Moreover, there exists a constant $c_2>0$ such that for all $(t, x, \eta, a, (\bar t, \bar x), \mu)\in [0, T]\times \bar{\mathcal{O}}\times \mathcal{P}_p^{sub}(\bar{\mathcal{O}})\times A \times \Sigma \times \mathcal{P}_p(\Sigma)$,
$$|f(t, x, \eta, a)|\leq c_2\left[ 1+|x|^p + \int_{\bar{\mathcal{O}}}|z|^p\eta(dz) \right],\quad |g(\bar t, \bar x, \mu)|\leq c_2\left[ 1+|\bar x|^p + \int_{\Sigma}|z|^p\mu(ds, dz)\right].$$
\end{enumerate}
\end{assumption}

We give below the linear programming formulation of the MFG problem with pure control and absorption, which has been introduced in \cite{dlt2021}.

\begin{definition}
Let $\mathcal{R}$ be the set of pairs $(\mu, m) \in \mathcal{P}_p(\Sigma)\times V_p$, such that for all $u\in C_b^{1, 2}([0, T]\times \bar{\mathcal{O}})$,
\begin{equation*}
\int_{\Sigma} u(t, x)\mu(dt, dx)= \int_\mathcal{O} u(0, x)m_0^*(dx) + \int_0^T \int_{\bar{\mathcal{O}}\times A} \left(\partial_t u +\mathcal L u\right) (t, x, a)m_t(dx, da)dt,
\end{equation*}
where
$$\mathcal L u(t, x, a):= b(t, x, a)\partial_x u(t, x) + \frac{\sigma^2}{2}(t, x)\partial_{xx} u(t, x).$$
\end{definition}

\begin{definition}[\textit{LP formulation of the MFG problem}]
For $(\bar \mu, \bar m)\in \mathcal{P}_p(\Sigma)\times V_p$, let $\Gamma[\bar \mu, \bar m]: \mathcal{P}_p(\Sigma)\times V_p\rightarrow \mathbb R$ be defined by
$$\Gamma[\bar \mu, \bar m] (\mu, m)= \int_0^T \int_{\bar{\mathcal{O}}\times A} f(t, x, \bar m_t^x, a) m_t(dx, da)dt + \int_{\Sigma} g(t, x, \bar \mu) \mu (dt, dx).$$
We say that $(\mu^\star, m^\star)\in \mathcal{P}_p(\Sigma)\times V_p$ is an LP MFG Nash equilibrium if $(\mu^\star, m^\star)\in \mathcal{R}$ and for all $(\mu, m)\in \mathcal{R}$, $\Gamma[\mu^\star, m^\star] (\mu, m)\leq \Gamma[\mu^\star, m^\star] (\mu^\star, m^\star)$.
\end{definition}

\noindent Observe that, for a given $(\bar{\mu},\bar{m}) \in \mathcal{P}_p(\Sigma)\times V_p$, the instantaneous reward function $f$ only depends on the marginal in $x$ of $\bar{m}$, i.e. $\bar m^x$. Henceforth, $f$ does not depend on the distribution of the controls. Note that in this setting, due to the absorption feature of the game, the flow of measures $(m_t)$ is not necessarily continuous. Under Assumption \ref{assump existence SC}, one can show existence of an LP MFG Nash equilibrium by using the tools developed in the section on optimal stopping, which allow to prove existence in a much more general framework than in \cite{dlt2021} (in particular, $f$, $g$, $b$ and $\sigma$ might have polynomial growth with respect to $(x,m,\mu)$ and  the reward maps $f$ and $g$ are allowed to have a general dependence on the measures $m$, resp. $\mu$).

\begin{assumption}\label{main assump SC}
We assume the following:
\begin{enumerate}[(1)]
\item  There exist functions $f_1:[0, T]\times \bar{\mathcal{O}}\times \mathcal{P}_p^{sub}(\bar{\mathcal{O}})\rightarrow\mathbb R$ and $f_2:[0, T]\times \bar{\mathcal{O}}\times A\rightarrow\mathbb R$ satisfying the same conditions as $f$ and such that $f=f_1 +f_2$.
\item For each $(\bar \mu, \bar m) \in \mathcal{R}$, there exists a unique maximizer of 
$\Gamma[\bar \mu, \bar m]$ on $\mathcal{R}$.
\item The Lasry–Lions monotonicity condition holds: for all $(\mu, m)$ and $(\tilde \mu, \tilde m)$ in $\mathcal{P}_p(\Sigma)\times V_p$,
\begin{equation*}
\langle f_1(m^x) - f_1(\tilde m^x),m^x-\tilde m^x\rangle 
+ \langle g(\mu)-g(\tilde \mu),\mu-\tilde \mu\rangle \leq 0.
\end{equation*}
\item There exist constants $c_f\geq 0$ and $c_g\geq 0$ such that for all $t\in [0, T]$, $x, x'\in \bar{\mathcal{O}}$, $\eta, \eta'\in \mathcal{P}^{sub}_p(\bar{\mathcal{O}})$, $(\bar t, \bar x), (\bar t', \bar x')\in \Sigma$, $\mu, \mu'\in \mathcal{P}_p(\Sigma)$, 
$$\left|f_1(t, x, \eta) - f_1(t, x, \eta')\right|\leq c_f(1+|x|) \int_{\bar{\mathcal{O}}}(1+|z|^p)|\eta-\eta'|(dz),$$
$$|f_1(t, x, \eta) - f_1(t, x, \eta') - f_1(t, x', \eta) + f_1(t, x', \eta')|\leq c_f |x-x'|\int_{\bar{\mathcal{O}}}(1+|z|^p)|\eta-\eta'|(dz),$$ 
$$|g(\bar t, \bar x, \mu) - g(\bar t, \bar x, \mu')|\leq c_g(1+|\bar x|) \int_{\Sigma}(1+|z|^p)|\mu-\mu'|(ds, dz),$$
$$|g(\bar t, \bar x, \mu) - g(\bar t, \bar x, \mu') - g(\bar t', \bar x', \mu) + g(\bar t', \bar x', \mu')|\leq c_g (|\bar t-\bar t'|+|\bar x-\bar x'|)\int_{\Sigma}(1+|z|^p)|\mu-\mu'|(ds, dz).$$
\end{enumerate}
\end{assumption}
\noindent For sufficient conditions on the coefficients under which the above assumptions hold, the reader is referred to Appendix \ref{app suff cond SC}. Using the same arguments as in Proposition \ref{uniq OS}, one can show that there exists at most one LP MFG Nash equilibrium. \vspace{10pt}

\noindent We propose the following algorithm for computing the LP MFG Nash equilibrium.\vspace{10pt}

\begin{algorithm}[H]\label{algo 2}
\SetAlgoLined
\KwData{A number of steps $N$ for the equilibrium approximation; a pair $(\bar \mu^{(0)}, \bar m^{(0)})\in \mathcal{R}$;}
\KwResult{Approximate LP MFG Nash equilibrium}
\For{$\ell=0, 1, \ldots, N-1$}{
Compute a linear programming best response $(\mu^{(\ell+1)}, m^{(\ell+1)})$ to $(\bar \mu^{(\ell)}, \bar m^{(\ell)})$ by solving the linear programming problem
$$\argmax_{(\mu, m)\in \mathcal{R}}\Gamma[\bar \mu^{(\ell)}, \bar m^{(\ell)}](\mu, m).$$
\\
Set $(\bar \mu^{(\ell+1)}, \bar m^{(\ell+1)}):=\frac{\ell}{\ell+1}(\bar \mu^{(\ell)}, \bar m^{(\ell)}) + \frac{1}{\ell+1}(\mu^{(\ell+1)},  m^{(\ell+1)})=\frac{1}{\ell + 1}\sum_{\nu=1}^{\ell+1}(\mu^{(\nu)},  m^{(\nu)})$ \\
}
\caption{LPFP algorithm (MFGs with pure control and absorption)}
\end{algorithm}
\vspace{5pt}

\noindent Using the topology of the convergence in measure (which is denoted by $\Tilde{\tau}_p$) for the marginals $m^x$ given by $m^x_t(dx)=\int_A m_t(dx, da)$  and appropriate estimates (given in terms of well chosen metrics), the convergence of the algorithm follows by similar arguments as in Theorem \ref{main theo OS}.

\begin{theorem}[\textit{Convergence of the algorithm}]
Let Assumptions \ref{assump existence SC} and \ref{main assump SC} hold true and consider the sequences $(\bar\mu^{(N)}, \bar m^{(N)})_{N\geq 1}$ and $(\mu^{(N)}, m^{(N)})_{N\geq 1}$ generated by the Algorithm \ref{algo 2}. Then both sequences converge in the product topology $\tau_p\otimes \bar{\tau}_p$ to the unique LP MFG Nash equilibrium $(\mu^\star, m^\star)$. Furthermore, the sequences  $(\bar\mu^{(N)}, \bar m^{x,(N)})_{N\geq 1}$ and $(\mu^{(N)}, m^{x, (N)})_{N\geq 1}$ converge in the product topology $\tau_p\otimes \tilde{\tau}_p$ to $(\mu^\star, m^{x,\star})$.
\end{theorem}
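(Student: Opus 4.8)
The plan is to transpose the proof of Theorem~\ref{main theo OS} almost line by line, the only structural changes being that the flow of subprobabilities now lives on $\bar{\mathcal O}\times A$, that one must pass to its $x$-marginal $m^x$ wherever a monotonicity or Lipschitz estimate enters, and that the part $f_2$ of the decomposition $f=f_1+f_2$ in Assumption~\ref{main assump SC}(1) contributes nothing to any difference of rewards. First I would re-establish the analogues of Lemma~\ref{estimates R OS}: from the probabilistic representation of elements of $\mathcal R$ established in \cite{dlt2021}, standard moment estimates give a constant $C_1$ with $\int_\Sigma|x|^q\,\mu(dt,dx)\le C_1$ and $\int_{\bar{\mathcal O}}|x|^q\,m^x_t(dx)\le C_1$ for all $t$, and Jensen together with Burkholder--Davis--Gundy gives $\int_0^{T-h}d_{\mathrm{BL}}(m^x_{t+h},m^x_t)\,dt\le C_2\sqrt h$ (the compactness of $A$ and the fact that $\sigma$ is control-free make the $a$-direction irrelevant for these bounds). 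One then obtains the compactness results: since $\mathcal P(A)$ is compact, the moment bound on $m$ and Corollary~\ref{compact M_p} give relative $\tau_p$-compactness of the full state--control flows; the Rossi--Savar\'e criterion (Theorem~2 and Extension~1 in \cite{rossi2003}) applied to the marginal flows $m^x$ gives relative $\tilde\tau_p$-compactness of the $m^x$; and the stable-convergence arguments of Lemma~\ref{lemma stable OS} upgrade these to $\bar\tau_p$-convergence of $m$ and $\tilde\tau_p$-convergence of $m^x$. Passing to the limit in the linear constraint, using the disintegration of the limit flow as a Markovian relaxed control, shows that $(\mathcal R,\tau_p\otimes\bar\tau_p)$ is compact, that $\bar\tau_p$ and the natural metric topologies coincide on $\mathcal R$ as in Corollary~\ref{rho distance}, and that the marginal map $m\mapsto m^x$ is continuous from $(\mathcal R,\bar\tau_p)$ to $(V_p,\tilde\tau_p)$. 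In particular $\tau_p\otimes\bar\tau_p$ is metrized on $\mathcal R$ by a metric $\rho$ of the form $\rho((\mu,m),(\mu',m'))=W_1(\mu,\mu')+d_M(m,m')$, with $d_M$ a time-integrated Wasserstein-type distance on the state--control flows whose $x$-marginal part is exactly the quantity $\int_0^T W_1'(m^x_t,m'^x_t)\,dt$ appearing in \eqref{distance measure}.

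Second, I would reproduce Lemmas~\ref{R estimates OS}, \ref{lemma lipschitz OS}, \ref{Theta cont OS} and Corollary~\ref{unif continuous OS}. The identity $\bar m^{(N+1)}-\bar m^{(N)}=\frac{1}{N+1}(m^{(N+1)}-\bar m^{(N)})$ and the moment bounds above yield $W_1(\bar\mu^{(N)},\bar\mu^{(N+1)})\le C/N$, $d_M(\bar m^{(N)},\bar m^{(N+1)})\le C/N$ and the $(1+|x|^p)$-weighted total-variation bounds $\int_{\bar{\mathcal O}\times A}(1+|x|^p)|\bar m^{(N+1)}_t-\bar m^{(N)}_t|(dx,da)\le C/N$ together with the analogue for $\bar\mu$, verbatim as in Lemma~\ref{R estimates OS}. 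For the reward-map estimates the key simplification is that, by Assumption~\ref{main assump SC}(1), $f(t,x,\bar m^{x,(N+1)}_t,a)-f(t,x,\bar m^{x,(N)}_t,a)=f_1(t,x,\bar m^{x,(N+1)}_t)-f_1(t,x,\bar m^{x,(N)}_t)$ does not depend on $a$, so integration against $m^{(N+2)}_t-m^{(N+1)}_t$ collapses to an integral against the marginal difference $m^{x,(N+2)}_t-m^{x,(N+1)}_t$; Assumption~\ref{main assump SC}(4) then shows, exactly as in Lemma~\ref{lemma lipschitz OS}, that $x\mapsto f_1(t,x,\bar m^{x,(N+1)}_t)-f_1(t,x,\bar m^{x,(N)}_t)$ is $C/N$-Lipschitz with $C/N$-bounded value at $x_0$, whence $\langle f(\bar m^{(N+1)})-f(\bar m^{(N)}),m^{(N+2)}-m^{(N+1)}\rangle\le\frac{C_f}{N}\,d_M(m^{x,(N+1)},m^{x,(N+2)})$, and the $g$-term is handled verbatim. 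Continuity of $\Theta$ on $(\mathcal R,\rho)$ follows from compactness, uniqueness of the maximizer in Assumption~\ref{main assump SC}(2), and passage to the limit in $\Gamma$ via Lemmas~\ref{slutsky_p}--\ref{slutsky_stable_p}; uniform continuity of $\Theta$ on the compact metric space $(\mathcal R,\rho)$ then gives $W_1(\mu^{(N)},\mu^{(N+1)})+d_M(m^{(N)},m^{(N+1)})\to0$.

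Third comes the exploitability argument. Define $\varepsilon_N=\langle f(\bar m^{(N)}),m^{(N+1)}-\bar m^{(N)}\rangle+\langle g(\bar\mu^{(N)}),\mu^{(N+1)}-\bar\mu^{(N)}\rangle\ge0$, decompose $\varepsilon_{N+1}-\varepsilon_N=\varepsilon_N^{(1)}+\varepsilon_N^{(2)}$ exactly as in the proof of Theorem~\ref{main theo OS}, and bound the two pieces with the estimates above. In every difference of rewards the $a$-dependent part $f_2$ cancels, so all $f$-differences reduce to $f_1$-differences of the $x$-marginals; collecting terms and using $\bar m^{x,(N+1)}-\bar m^{x,(N)}=\frac{1}{N+1}(m^{x,(N+1)}-\bar m^{x,(N)})$, the Lasry--Lions condition of Assumption~\ref{main assump SC}(3) (stated precisely for $f_1$ on the marginals and for $g$) yields the recursion $\varepsilon_{N+1}-\varepsilon_N\le-\frac{\varepsilon_N}{N+1}+\frac{\delta_N}{N}$ with $\delta_N=C\big[d_M(m^{(N+1)},m^{(N+2)})+W_1(\mu^{(N+1)},\mu^{(N+2)})+\frac{1}{N}\big]\to0$. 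Lemma~3.1 in \cite{hadikhanloo2017} gives $\varepsilon_N\to0$. Finally, take a cluster point $((\mu,m),(\bar\mu,\bar m))$ of $((\mu^{(N+1)},m^{(N+1)}),(\bar\mu^{(N)},\bar m^{(N)}))_{N\ge1}$ in $\tau_p\otimes\bar\tau_p$, which exists by compactness: continuity of $\Theta$ gives $(\mu,m)=\Theta(\bar\mu,\bar m)$, and passing to the limit $N\to\infty$ in $\Gamma[\bar\mu^{(N)},\bar m^{(N)}](\bar\mu^{(N)},\bar m^{(N)})\ge\Gamma[\bar\mu^{(N)},\bar m^{(N)}](\tilde\mu,\tilde m)-\varepsilon_N$ over arbitrary $(\tilde\mu,\tilde m)\in\mathcal R$ shows that $(\bar\mu,\bar m)$ is an LP MFG Nash equilibrium, hence $(\bar\mu,\bar m)=(\mu,m)=(\mu^\star,m^\star)$ by uniqueness of the best response. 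Since the cluster point is therefore unique, the whole sequences converge in $\tau_p\otimes\bar\tau_p$, and applying the continuous marginal map $m\mapsto m^x$ gives convergence of $(\bar\mu^{(N)},\bar m^{x,(N)})$ and $(\mu^{(N)},m^{x,(N)})$ in $\tau_p\otimes\tilde\tau_p$.

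The main obstacle is the compactness step and, within it, the correct treatment of the relaxed-control component: unlike the $x$-marginal flow, the full state--control flow $t\mapsto m_t$ carries no $d_{\mathrm{BL}}$-modulus of time-continuity, so one cannot expect it to be compact in the convergence-in-measure topology; the argument must instead combine the in-measure compactness of the marginals $m^x$ (via Rossi--Savar\'e) with the compactness of $\mathcal P(A)$ and a stable-convergence (Jacod--M\'emin) argument to produce a limit point whose disintegration is again a legitimate Markovian relaxed control satisfying the linear constraint. This is precisely why the theorem states in-measure ($\tilde\tau_p$) convergence only for the marginals and merely stable ($\bar\tau_p$) convergence for the full flows; once this is in place, the remaining steps are a routine bookkeeping adaptation of Section~\ref{sec OS}, the one genuinely algebraic point being the systematic cancellation of $f_2$ in all reward differences.
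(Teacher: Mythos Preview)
Your proposal is correct and follows precisely the approach the paper indicates: the paper does not give a detailed proof but merely states that ``the convergence of the algorithm follows by similar arguments as in Theorem~\ref{main theo OS}'' using the $\tilde\tau_p$-topology on the marginals $m^x$, and you have faithfully and carefully fleshed this out, including the two genuinely new ingredients (the systematic cancellation of $f_2$ in all reward differences, and the distinction between $\bar\tau_p$-compactness of the full state--control flows versus $\tilde\tau_p$-compactness of the $x$-marginals).

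One minor inconsistency worth cleaning up: you first assert that $\tau_p\otimes\bar\tau_p$ on $\mathcal R$ is metrized by $\rho=W_1+d_M$ with $d_M$ a time-integrated Wasserstein distance \emph{on the full state--control flows}, but you then (correctly) observe in your final paragraph that the full flows $t\mapsto m_t$ carry no $d_{\mathrm{BL}}$-modulus and hence cannot be expected to be compact in the convergence-in-measure topology. These two statements are in tension: if $d_M$ on the full flows induced the $\bar\tau_p$-topology on the compact set $\mathcal R$, then $\mathcal R$ would be $\tilde\tau_p$-compact for the full flows, which you rightly doubt. The fix is simply to take as $\rho$ any metric for $\tau_p\otimes\bar\tau_p$ (which is metrizable), and to obtain $d_M(m^{x,(N+1)},m^{x,(N+2)})\to 0$ via the continuity of the marginal map $m\mapsto m^x$ from $(\mathcal R,\bar\tau_p)$ to $(V_p,\tilde\tau_p)$ that you already established; the Lipschitz estimate in your $\delta_N$ then only involves the marginal distance $d_M(m^{x,\cdot},m^{x,\cdot})$, exactly as you wrote. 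This is a notational rather than a mathematical issue.
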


\paragraph{Numerical example.}

We now illustrate the LPFP algorithm for MFGs with pure control and absorption through a numerical example. In this example, let $T=1$ and assume that the state of the representative player belongs to the domain $\bar{\mathcal O}$ with $\mathcal{O}=]-2,2[$, and is given by
$$
X_t^\alpha=X_0^\alpha+\int_0^t\alpha_s ds + W_t,
$$
i.e. $b(t, x, a)=a$ and $\sigma(t, x, a)=1$. The control $\alpha$ is assumed to take values in $A=[-1, 1]$.  The initial states of the players are distributed according to the law $\mathcal{N}(0, 0.1)$ truncated to $\mathcal O$. 

Before exiting the game at time $\tau_{\mathcal O}^{X^\alpha}\wedge T$, the representative player receives an instantaneous reward
$$-10\int_{[-2, 2]}e^{-|X_t^\alpha-y|}\eta(dy)-2||X_t^\alpha|-1|-\alpha_t^2,
$$
and the terminal reward at exit time is given by $-|X_{\tau_{\mathcal O}^{X^\alpha} \wedge T}^\alpha|$, that is:
$$f(t, x, \eta, a)=-10\int_{[-2, 2]}e^{-|x-y|}\eta(dy)-2||x|-1|-a^2, \quad g(t, x, \mu)=-|x|.$$

During the game, players have an incentive to be near the points $-1$ or $1$, and converge to the point $0$ at the final time, but at the same time the mean-field dependence creates an incentive to be far from other players. 

We discretize the linear program in a similar way to the optimal stopping case. More precisely, we consider a time grid $t_i=i \Delta$ with $\Delta=\frac{T}{n_t}$, for $i\in \{0, 1, \ldots n_t\}$, a state grid $x_{j+1}=x_j+\delta$, for $j\in \{0, 1, \ldots n_s-1\}$ with $x_0\in\mathbb R$ and $\delta>0$ and an action grid $a_0<\ldots<a_{n_a}$. We define
$$Lu(t, x, a)=\frac{\partial u}{\partial t}(t, x) + b(t, x, a)\frac{\partial u}{\partial x}(t, x) + \frac{\sigma^2}{2}(t, x)\frac{\partial^2 u}{\partial x^2}(t, x), \quad \forall u\in \mathcal{D}(L):=C^{1, 2}_b([0, T]\times \bar{\mathcal{O}}).$$
We set $\mathcal{D}(\hat L)$ as the functions in $\mathcal{D}(L)=C^{1, 2}_b([0, T]\times \bar{\mathcal{O}})$ restricted to the time-state discretization grid. For $u\in \mathcal{D}(\hat L)$, we discretize the derivatives as follows
$$\hat L_t u (t_i, x_j)= \frac{1}{\Delta}[u(t_{i+1}, x_j)-u(t_i, x_j)],$$
$$\hat L_x^u u (t_i, x_j, a_k)=\frac{1}{\delta}\max(b(t_i, x_j, a_k), 0)[u(t_{i+1}, x_{j+1})-u(t_{i+1}, x_{j})],$$
$$\hat L_x^d u (t_{i}, x_j, a_k)=\frac{1}{\delta}\min(b(t_i, x_j, a_k), 0)[u(t_{i+1}, x_{j})-u(t_{i+1}, x_{j-1})],$$
$$\hat L_{xx} u (t_i, x_j)=\frac{1}{\delta^2}\frac{\sigma^2}{2}(t_i, x_j)[u(t_{i+1}, x_{j+1}) + u(t_{i+1}, x_{j-1})-2u(t_{i+1}, x_j)].$$
The discretized generator has the form:
$$\hat L u (t_i, x_j, a_k) = \hat L_t u (t_i, x_j) + \hat L_x^u u (t_i, x_j, a_k) + \hat L_x^d u (t_i, x_j, a_k) + \hat L_{xx} u (t_i, x_j).$$
The constraint reads as
\begin{multline*}
\sum_{i=0}^{n_t-1}\sum_{j\in\{0, n_s\}} u(t_i, x_j)\mu(t_i, x_j)+ \sum_{j=0}^{n_s} u(t_{n_t}, x_j)\mu(t_{n_t}, x_j) - \Delta \sum_{i=0}^{n_t-1}\sum_{j=1}^{n_s-1}\sum_{k=0}^{n_a} \hat L u(t_i, x_j, a_k) m(t_i, x_j, a_k)\\
= \sum_{j=1}^{n_s-1}u(t_0, x_j) m_0^*(x_j), 
\end{multline*}
for $u\in \mathcal{D}(\hat L)$. As in the optimal stopping case, it suffices to evaluate the constraint on the set of indicator functions. The discretized reward associated to a discrete mean-field term $(\bar\mu, \bar m)$ is given by
\begin{multline*}
\sum_{i=0}^{n_t-1}\sum_{j\in\{0, n_s\}} g(t_i, x_j, \bar\mu)\mu(t_i, x_j)+ \sum_{j=0}^{n_s} g(t_{n_t}, x_j, \bar\mu)\mu(t_{n_t}, x_j) \\
+ \Delta\times \sum_{i=0}^{n_t-1}\sum_{j=1}^{n_s-1}\sum_{k=0}^{n_a} f(t_i, x_j, \bar m^x(t_i, \cdot), a_k) m(t_i, x_j, a_k).
\end{multline*}
The generator obtained using these approximations is associated to the following controlled Markov chain (see p. 328 in \cite{kushner2001}):
$$\mathbb P(Y_{t_{i+1}}=x_{j}|Y_{t_{i}}=x_{j}, \alpha_{t_{i}}=a_k)=1-\sigma^2(t_i, x_j)\frac{\Delta}{\delta^2} - |b(t_i, x_j, a_k)|\frac{\Delta}{\delta},$$
$$\mathbb P(Y_{t_{i+1}}=x_{j+1}|Y_{t_{i}}=x_{j}, \alpha_{t_{i}}=a_k)=\frac{\sigma^2}{2}(t_i, x_j)\frac{\Delta}{\delta^2} + b^+(t_i, x_j, a_k)\frac{\Delta}{\delta},$$
$$\mathbb P(Y_{t_{i+1}}=x_{j-1}|Y_{t_{i}}=x_{j}, \alpha_{t_{i}}=a_k)=\frac{\sigma^2}{2}(t_i, x_j)\frac{\Delta}{\delta^2} + b^-(t_i, x_j, a_k)\frac{\Delta}{\delta}.$$
For this to be well defined, we should have for all $i$, $j$ and $k$
$$\Delta\leq \frac{\delta^2}{\sigma^2(t_i, x_j) + \delta |b|(t_i, x_j, a_k)}.$$
The discretized constraint coincides with the constraint associated to the controlled Markov chain $Y$ (with absorption on $\{x_0, x_{n_s}\}$).\vspace{5pt}

\noindent At each iteration the exploitability writes: 
\begin{multline*}
\varepsilon_N=\Delta \sum_{i=0}^{n_t-1}\sum_{j=1}^{n_s-1}\sum_{k=0}^{n_a}f(t_i, x_j, \bar m^{x, (N-1)}(t_i, \cdot), a_k)(m^{(N)}(t_i, x_j, a_k)-\bar m^{(N-1)}(t_i, x_j, a_k)) \\ 
+ \sum_{i=0}^{n_t}\sum_{j\in\{0, n_s\}}g(t_i, x_j, \bar \mu^{(N-1)})(\mu^{(N)}(t_i, x_j)- \bar \mu^{(N-1)}(t_i, x_j)) \\
+ \sum_{j=0}^{n_s} g(t_{n_t}, x_j, \bar \mu^{(N-1)})(\mu^{(N)}(t_{n_t}, x_j)- \bar \mu^{(N-1)}(t_{n_t}, x_j)).
\end{multline*}

In Figure \ref{Distribution SC}, we observe the distribution of the players still in the game over time together with the exit distributions at the boundary and the distribution of the players at the terminal time. Figure \ref{control} shows the Markovian control given by $\bar{\alpha}^{(N)}(t, x)=\int_Aa\bar{\nu}_{t, x}^{(N)}(da)$ (which is the optimal control since $A$ is convex, $b$ is affine in $a$ and $f$ is strictly concave in $a$, see the proof of Theorem \ref{unique max sc} for more details). We see that players starting in a positive state use a positive control at the beginning to be near the point $1$ and switch to a negative control towards the end of the game to be close to $0$. On the other hand, the players starting in a negative state use the opposite strategy. Finally, Figure \ref{error_sc} illustrates the convergence of the algorithm through the measurement of the exploitability.

\begin{figure}[h!]
    \centering
    \includegraphics[width=1.\textwidth]{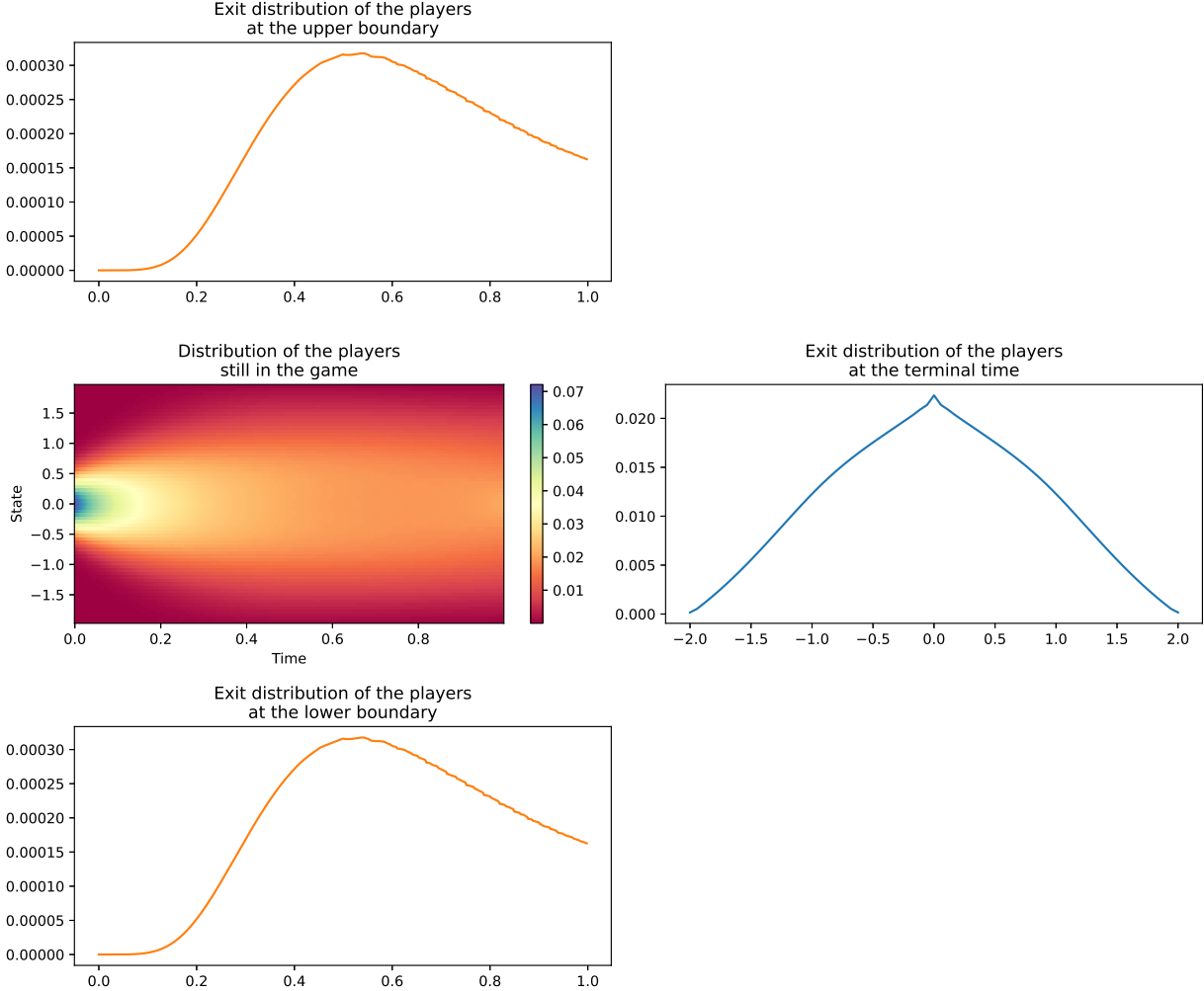}
    \caption{Equilibrium distributions at the final iteration. Top: $\bar \mu^{(N)}(\cdot\times \{2\})$. Middle left: $\bar m^{x, (N)}$, distribution of the players still in the game over time. Middle right: $\bar \mu^{(N)}(\{T\}\times \cdot)$. Bottom: $\bar \mu^{(N)}(\cdot\times \{-2\})$.}
    \label{Distribution SC}
\end{figure}

\begin{figure}
    \centering
    \includegraphics[width=0.6\textwidth]{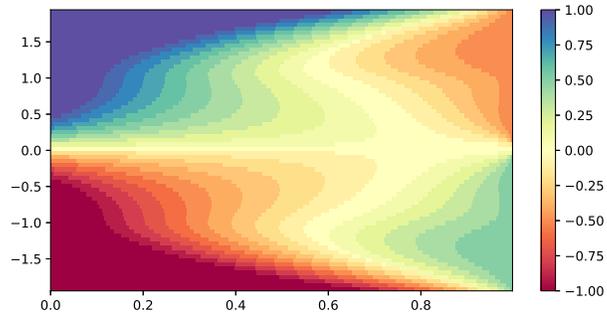}
    \caption{Markovian control at the equilibrium.}
    \label{control}
\end{figure}

\begin{figure}
    \centering
    \includegraphics[width=0.6\textwidth]{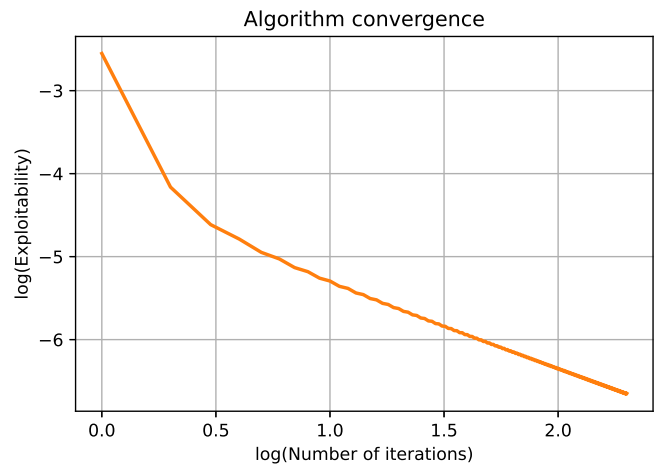}
    \caption{Convergence of the algorithm: log-log plot of the exploitability.}
    \label{error_sc}
\end{figure}

\newpage
\appendix

\section{Polynomial growth topologies for measures}\label{app tau p}

Let $(E, d)$ be a complete and separable metric space. We endow $\mathcal{M}^s(E)$ with the topology of weak convergence $\tau_0:=\sigma(\mathcal{M}^s(E), C_b(E))$. The relative topology in $\mathcal{M}(E)$ is completely metrizable through the bounded Lipschitz distance (see \cite{bogachev2007}, volume II, p.192 and Theorem 8.3.2 p.193)
$$d_{\text{BL}}(\mu^1, \mu^2):=\sup\left\{\int_E \varphi(x)(\mu^1-\mu^2)(dx): \; \varphi\in \text{BL}(E),\; \|\varphi\|_{\text{BL}} \leq 1  \right\},\quad \mu^1, \mu^2\in \mathcal{M}(E),$$
where $\text{BL}(E)$ is the space of all bounded Lipschitzian functions on $E$ with the norm
$$\|\varphi\|_{\text{BL}}:=\|\varphi\|_\infty + \sup_{x\neq y} \frac{|\varphi(x)-\varphi(y)|}{|x-y|}.$$
Let $p\geq 1$ and $x_0$ be an arbitrary point in $E$ and define the function $\psi:E\rightarrow\mathbb R$ by $\psi(x)=1+d(x, x_0)^p$. Consider the class of functions 
$$C_p(E)=\left\{\phi\in C(E): \sup_{x\in E}\frac{|\phi(x)|}{\psi(x)}<\infty\right\}$$
and define the topology $\tau_p:=\sigma(\mathcal{M}_p^s(E), C_p(E))$ on $\mathcal{M}_p^s(E)$.\vspace{10pt}

We give below some technical results, for which we do not provide the proofs since they use standard arguments (see e.g. Appendix A in \cite{lacker2015} and Theorem 1 in \cite{gomes2010}).

\begin{lemma}\label{homeo 1}
The function $F:(\mathcal{M}_p^s(E), \tau_p)\rightarrow (\mathcal{M}^s(E), \tau_0)$ given by $F(\mu)=\psi(x)\mu(dx)$ is an homeomorphism. The same function is an homeomorphism between the subspaces $(\mathcal{M}_p(E), \tau_p)$ and $(\mathcal{M}(E), \tau_0)$.
\end{lemma}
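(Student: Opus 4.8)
The plan is to exhibit an explicit inverse for $F$ and then to reduce the continuity of both $F$ and $F^{-1}$ to the defining universal property of $\tau_p$ and $\tau_0$ as initial (weak) topologies, rather than arguing through sequences.

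First I would check that $F$ is well defined and bijective. For $\mu\in\mathcal{M}_p^s(E)$ one has $\int_E\psi\,d|\mu|=|\mu|(E)+\int_E d(x,x_0)^p|\mu|(dx)<\infty$, so $\psi$ is $|\mu|$-integrable and $F(\mu)(dx)=\psi(x)\mu(dx)$ is a well-defined finite signed measure with $|F(\mu)|(dx)=\psi(x)|\mu|(dx)$ (using $\psi\geq 1>0$); hence $F(\mu)\in\mathcal{M}^s(E)$. Conversely, for $\nu\in\mathcal{M}^s(E)$ set $G(\nu)(dx)=\psi(x)^{-1}\nu(dx)$; since $0<\psi^{-1}\leq 1$ this is a finite signed measure, and since $d(x,x_0)^p/\psi(x)\leq 1$ one gets $\int_E d(x,x_0)^p|G(\nu)|(dx)\leq|\nu|(E)<\infty$, so $G(\nu)\in\mathcal{M}_p^s(E)$. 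A direct computation gives $F\circ G=\mathrm{id}$ and $G\circ F=\mathrm{id}$, so $F$ is a bijection with $F^{-1}=G$.

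Next, the key observation is that multiplication by $\psi$ is a bijection from $C_b(E)$ onto $C_p(E)$: if $\varphi\in C_b(E)$ then $\varphi\psi\in C(E)$ and $\sup_{x\in E}|\varphi(x)\psi(x)|/\psi(x)=\|\varphi\|_\infty<\infty$, so $\varphi\psi\in C_p(E)$; conversely if $\phi\in C_p(E)$ then $\phi\psi^{-1}\in C(E)$ (as $\psi>0$ is continuous) and $\sup_{x\in E}|\phi(x)\psi^{-1}(x)|=\sup_{x\in E}|\phi(x)|/\psi(x)<\infty$, so $\phi\psi^{-1}\in C_b(E)$. Now $\tau_p$ is by definition the coarsest topology on $\mathcal{M}_p^s(E)$ making every map $\mu\mapsto\int_E\phi\,d\mu$ with $\phi\in C_p(E)$ continuous, and likewise for $\tau_0$ with $C_b(E)$. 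Hence $F$ is $\tau_p$-to-$\tau_0$ continuous iff for each $\varphi\in C_b(E)$ the map $\mu\mapsto\int_E\varphi\,dF(\mu)=\int_E\varphi\psi\,d\mu$ is $\tau_p$-continuous, which holds because $\varphi\psi\in C_p(E)$; symmetrically, $F^{-1}$ is $\tau_0$-to-$\tau_p$ continuous iff for each $\phi\in C_p(E)$ the map $\nu\mapsto\int_E\phi\,dF^{-1}(\nu)=\int_E\phi\psi^{-1}\,d\nu$ is $\tau_0$-continuous, which holds since $\phi\psi^{-1}\in C_b(E)$. Therefore $F$ is a homeomorphism.

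Finally, for the second assertion I would note that $F$ maps $\mathcal{M}_p(E)$ bijectively onto $\mathcal{M}(E)$, since positivity of a measure is preserved under multiplication by $\psi>0$ and by $\psi^{-1}>0$; as the topologies on $\mathcal{M}_p(E)$ and $\mathcal{M}(E)$ are by definition the subspace topologies induced by $\tau_p$ and $\tau_0$, the restriction of the homeomorphism $F$ is again a homeomorphism. I do not anticipate a real obstacle here: the only points requiring care are the integrability bookkeeping that places $F$ and $F^{-1}$ in the correct spaces (controlled by $d(x,x_0)^p\le\psi(x)$ and $\psi\ge 1$), and the fact that one should invoke the universal property of the weak topologies rather than sequential criteria, since $\tau_p$ and $\tau_0$ need not be metrizable on the full signed-measure spaces.
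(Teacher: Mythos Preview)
Your proof is correct. The paper does not give its own proof of this lemma, deferring instead to ``standard arguments'' with references to Appendix A in \cite{lacker2015} and Theorem 1 in \cite{gomes2010}; your argument is precisely the standard one they have in mind, namely exhibiting the inverse $\nu\mapsto\psi^{-1}\nu$ and using that $\phi\mapsto\phi\psi^{-1}$ is a bijection $C_p(E)\to C_b(E)$ together with the initial-topology characterization of $\tau_p$ and $\tau_0$.
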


\begin{remark}
Since $(\mathcal{M}(E), \tau_0)$ is completely metrizable through the bounded Lipschitz distance, we get that $(\mathcal{M}_p(E), \tau_p)$ is also completely metrizable by the metric
$$d_{\text{BL}, p}(\mu^1, \mu^2):=\sup\left\{\int_E \varphi(x)\psi(x)(\mu^1-\mu^2)(dx): \; \varphi\in \text{BL}(E),\; \|\varphi\|_{\text{BL}} \leq 1  \right\},\quad \mu^1, \mu^2\in \mathcal{M}_p(E).$$
In particular, for all $ \mu\in \mathcal{M}_p(E)$, $d_{\text{BL}, p}(\mu, 0) = \int_E\psi(x)\mu(dx)$.
\end{remark}

The following proposition characterizes the convergence in $\tau_p$ for nonnegative measures. The proof is analogous to the one from Theorem 7.12 in \cite{villani2003}.

\begin{proposition}\label{carac p convergence}
A sequence $(\mu_n)_{n\geq 1}\subset\mathcal{M}_p(E)$ converges to $\mu\in \mathcal{M}_p(E)$ in $\tau_p$ if and only if $(\mu_n)_{n\geq 1}$ converges to $\mu$ weakly and 
\begin{equation}\label{unif int p}
\lim_{r\rightarrow\infty}\limsup_{n\rightarrow \infty}\int_{\{x\in E:\; d(x, x_0)^p\geq r\}}d(x, x_0)^p\mu_n(dx)=0.
\end{equation}
\end{proposition}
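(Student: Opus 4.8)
The plan is to prove the two implications separately, following the truncation argument in the proof of Theorem 7.12 in \cite{villani2003}. Fix once and for all a family of continuous cutoffs $\theta_r:[0,\infty)\to[0,1]$, $r\geq 2$, with $\theta_r\equiv 1$ on $[0,r-1]$ and $\theta_r\equiv 0$ on $[r,\infty)$, and set $\Theta_r(x):=\theta_r(d(x,x_0)^p)$. Three elementary facts are all that is needed: (a) $\Theta_r\in C_b(E)$ with values in $[0,1]$; (b) $x\mapsto d(x,x_0)^p\bigl(1-\Theta_r(x)\bigr)$ is continuous and dominated by $\psi$, hence lies in $C_p(E)$, and it coincides with $d(x,x_0)^p$ on $\{d(x,x_0)^p\geq r\}$; (c) for $\phi\in C_p(E)$ with $|\phi|\leq M\psi$, the function $\phi\Theta_r$ is bounded and continuous (it is supported in $\{d(x,x_0)^p\leq r\}$), while $|\phi(1-\Theta_r)|\leq 2M\,d(x,x_0)^p\,\mathbf{1}_{\{d(x,x_0)^p\geq r-1\}}$, using $1+t\leq 2t$ for $t\geq 1$. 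As an alternative one could invoke the homeomorphism $F$ of Lemma \ref{homeo 1} to reduce everything to ordinary weak convergence of $\psi\mu_n$, but the direct route is shorter.

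For the direct implication, assume $\mu_n\to\mu$ in $\tau_p$. Since $C_b(E)\subset C_p(E)$, weak convergence is immediate. For the tail condition, fact (b) gives $\int_{\{d(x,x_0)^p\geq r\}}d(x,x_0)^p\,\mu_n(dx)\leq\int_E d(x,x_0)^p(1-\Theta_r(x))\,\mu_n(dx)$, and the right-hand side converges as $n\to\infty$ to $\int_E d(x,x_0)^p(1-\Theta_r(x))\,\mu(dx)$ because the integrand is in $C_p(E)$. Hence $\limsup_n\int_{\{d(x,x_0)^p\geq r\}}d(x,x_0)^p\,\mu_n(dx)$ is bounded by this $\mu$-integral, which tends to $0$ as $r\to\infty$ by dominated convergence (the integrand goes to $0$ pointwise and is dominated by $d(\cdot,x_0)^p\in L^1(\mu)$, since $\mu\in\mathcal{M}_p(E)$). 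This is exactly \eqref{unif int p}.

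For the converse, assume weak convergence and \eqref{unif int p}, and fix $\phi\in C_p(E)$ with $|\phi|\leq M\psi$. Split $\phi=\phi\Theta_r+\phi(1-\Theta_r)$. By fact (c) and weak convergence, $\int\phi\Theta_r\,d\mu_n\to\int\phi\Theta_r\,d\mu$ for each fixed $r$, whereas $\bigl|\int\phi(1-\Theta_r)\,d\nu\bigr|\leq 2M\int_{\{d(x,x_0)^p\geq r-1\}}d(x,x_0)^p\,\nu(dx)$ for $\nu\in\{\mu_n,\mu\}$; the $\mu$-term vanishes as $r\to\infty$ since $\mu\in\mathcal{M}_p(E)$, and $\limsup_n$ of the $\mu_n$-term vanishes as $r\to\infty$ by \eqref{unif int p} (reindexed by $r-1$). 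A standard three-$\varepsilon$ argument — choose $r$ to make the two tail terms small, then let $n\to\infty$ in the bounded continuous term — yields $\limsup_n\bigl|\int\phi\,d\mu_n-\int\phi\,d\mu\bigr|=0$, i.e. $\mu_n\to\mu$ in $\tau_p$.

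There is no genuine obstacle here: this is the classical equivalence between Wasserstein-type convergence and weak convergence together with uniform integrability of the $p$-th moments. The only points requiring care are purely bookkeeping: orienting the cutoffs so that the truncated functions land in $C_b(E)$ and $C_p(E)$ respectively, and using the standing hypothesis $\mu\in\mathcal{M}_p(E)$ — rather than trying to recover it from $(\mu_n)$ — to control the tail integral against the limit measure.
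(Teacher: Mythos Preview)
Your proof is correct and follows exactly the approach the paper indicates: the paper does not spell out its own argument but simply states that ``the proof is analogous to the one from Theorem 7.12 in \cite{villani2003}'', and your truncation argument with the cutoffs $\Theta_r$ is precisely that proof adapted to the present setting of finite positive measures.
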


\begin{corollary}\label{compact M_p}
Assume that $E$ is a closed subset of an Euclidean space with norm $|\cdot|$. A set $\mathcal{K}\subset \mathcal{M}_p(E)$ is relatively compact in $\tau_p$ if there exists $q>p$ such that
$$\sup_{\mu\in \mathcal{K}}\int_E (1+|x|^q)\mu(dx)<\infty.$$
\end{corollary}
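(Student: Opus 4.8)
The idea is to transport the statement to the classical Prokhorov criterion for finite (not necessarily probability) measures by means of the homeomorphism $F$ of Lemma \ref{homeo 1}. Throughout, $d(x,y)=|x-y|$ is the restriction to $E$ of the Euclidean metric, $x_0\in E$ is the fixed reference point used in the definition of $\tau_p$, and $\psi(x)=1+|x-x_0|^p$. Set $M:=\sup_{\mu\in\mathcal{K}}\int_E(1+|x|^q)\mu(dx)<\infty$. Since $|x-x_0|^s\le 2^{s-1}(|x|^s+|x_0|^s)$ for $s\in\{p,q\}$, there is a constant $C=C(p,q,|x_0|)$ such that for every $\mu\in\mathcal{K}$ one has $\int_E\psi\,d\mu\le CM$ and $\int_E|x-x_0|^q\mu(dx)\le CM$.

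\textbf{Step 1: the image family is relatively $\tau_0$-compact.} First I would show that $F(\mathcal{K})=\{\psi(x)\mu(dx):\mu\in\mathcal{K}\}\subset\mathcal{M}(E)$ is relatively compact in $(\mathcal{M}(E),\tau_0)$. Its members have uniformly bounded total mass, because $(\psi\mu)(E)=\int_E\psi\,d\mu\le CM$. For tightness, observe that if $R\ge 1$ and $|x-x_0|\ge R$ then $1+|x-x_0|^p\le 2|x-x_0|^p\le 2R^{-(q-p)}|x-x_0|^q$, hence
$$
(\psi\mu)\bigl(\{x\in E:|x-x_0|\ge R\}\bigr)=\int_{\{|x-x_0|\ge R\}}\psi\,d\mu\le 2R^{-(q-p)}\int_E|x-x_0|^q\mu(dx)\le 2CM\,R^{-(q-p)},
$$
which tends to $0$ as $R\to\infty$ uniformly over $\mu\in\mathcal{K}$; since $E$ is closed in a Euclidean space, the sets $E\cap\{|x-x_0|\le R\}$ are compact, so $F(\mathcal{K})$ is uniformly tight. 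By Prokhorov's theorem for finite Borel measures on the Polish space $E$, $F(\mathcal{K})$ is relatively compact in $(\mathcal{M}(E),\tau_0)$.

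\textbf{Step 2: pull back through $F$.} By Lemma \ref{homeo 1}, $F$ is a homeomorphism of $(\mathcal{M}_p(E),\tau_p)$ onto $(\mathcal{M}(E),\tau_0)$, and $\mathcal{K}=F^{-1}(F(\mathcal{K}))$. A homeomorphism maps closures to closures and compact sets to compact sets, so the $\tau_p$-closure of $\mathcal{K}$ equals $F^{-1}$ applied to the $\tau_0$-closure of $F(\mathcal{K})$, which is compact by Step 1. Hence $\mathcal{K}$ is relatively compact in $\tau_p$.

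\textbf{Main obstacle.} There is no deep difficulty here; the only points requiring a little care are the elementary inequality converting the $|x|$-moment bound into the $\psi$-weighted mass bound and the uniform tail estimate, and invoking the version of Prokhorov's theorem valid for uniformly bounded families of finite (non-probability) measures on a Polish space. One could avoid the homeomorphism and argue directly via Proposition \ref{carac p convergence}: extract a weakly convergent subsequence by Prokhorov, verify that \eqref{unif int p} holds uniformly along $\mathcal{K}$ using exactly the estimate above (here the strict inequality $q>p$ is essential, as it also shows the limit lies in $\mathcal{M}_p(E)$), and then use the metrizability of $\tau_p$ on $\mathcal{M}_p(E)$ to conclude that sequential relative compactness gives relative compactness.
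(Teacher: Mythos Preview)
Your proof is correct and takes a genuinely different route from the paper. The paper first shows that $\mathcal{K}$ itself is tight and bounded in total variation, applies Prokhorov to get relative $\tau_0$-compactness of $\mathcal{K}$, and then upgrades sequential $\tau_0$-convergence to $\tau_p$-convergence via Proposition~\ref{carac p convergence}, verifying the uniform integrability condition \eqref{unif int p} with H\"older's and Markov's inequalities. You instead push $\mathcal{K}$ through the homeomorphism $F$ of Lemma~\ref{homeo 1}, establish tightness and mass bounds for $F(\mathcal{K})$ directly from the elementary tail estimate $\psi(x)\le 2R^{-(q-p)}|x-x_0|^q$ on $\{|x-x_0|\ge R\}$, and pull the resulting $\tau_0$-relative-compactness back. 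Your argument is slightly more streamlined since it avoids the two-step upgrade and dispenses with H\"older--Markov; the paper's route has the minor advantage of isolating the uniform-integrability condition explicitly, which is reusable elsewhere. Your final paragraph in fact sketches exactly the paper's argument as an alternative, so you have identified both paths.
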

\begin{proof}
The condition
$$\sup_{\mu\in \mathcal{K}}\int_E (1+|x|^q)\mu(dx)<\infty,$$
implies that $\mathcal{K}$ is tight (since the map $x\mapsto 1+|x|^q$ has compact level sets) and uniformly bounded in total variation norm. By Prokhorov’s Theorem (Theorem 8.6.2 in \cite{bogachev2007} Volume II), the set $\mathcal{K}$ is relatively compact in $\tau_0$. Henceforth, by Proposition \ref{carac p convergence}, it suffices to show the uniform integrability condition \eqref{unif int p} for a given sequence $(\mu_n)_{n\geq 1}\subset\mathcal{K}$. By Hölder's inequality, for all $r\geq 0$
\begin{align*}
\int_{\{x\in E:\; |x|^p\geq r\}}|x|^p\mu_n(dx)&\leq \left[\int_E|x|^q\mu_n(dx)\right]^{p/q}\mu_n(\{x\in E:\; |x|^p\geq r\})^{(q-p)/q}\\
&\leq C\mu_n(\{x\in E:\; |x|^p\geq r\})^{(q-p)/q}.
\end{align*}
By Markov's inequality,
\begin{align*}
\mu_n(\{x\in E:\; |x|^p\geq r\})\leq \frac{1}{r}\int_E|x|^p\mu_n(dx)\leq \frac{C}{r}.
\end{align*}
This suffices to conclude.
\end{proof}

Now we are interested in an analogue version of the stable convergence topology for positive measures where the test functions are allowed to have polynomial growth. Consider two complete separable metric spaces $(E, d_E)$ and $(F, d_F)$. The distance on the product space is given by
$$d((x, y), (x', y'))=(d_E(x, x')^p+d_F(y, y')^p)^{1/p}, \quad (x, y)\in E\times F.$$
Let $(x_0, y_0)\in E\times F$ and define the function
$$\bar\psi: E\times F\ni (x, y)\mapsto 1+ d((x, y), (x_0, y_0))^p\in \mathbb R.$$
Consider the following sets of functions, which are measurable in the first component and continuous in the second one:
$$M_{mc}(E\times F)=\left\{\phi\in M_b(E\times F): \forall x\in E, \; \phi(x, \cdot)\in C(F)\right\},$$
$$M_{mc, p}(E\times F)=\left\{\phi\in M(E \times F): \forall x\in E, \; \phi(x, \cdot)\in C(F), \; \sup_{(x, y)\in E\times F}\frac{|\phi(x, y)|}{\bar \psi(x, y)}<\infty\right\}.$$

The topology $\bar\tau:=\sigma(\mathcal{M}(E\times F), M_{mc}(E\times F))$ is known as the topology of stable convergence (see \cite{jacod1981}). We are interested in studying the space $\mathcal{M}_p(E\times F)$ endowed with the topology $\bar\tau_p:=\sigma(\mathcal{M}_p(E\times F), M_{mc, p}(E\times F))$.

\begin{lemma}\label{homeo 2}
The function $\bar F:(\mathcal{M}_p(E\times F), \bar \tau_p)\rightarrow (\mathcal{M}(E\times F), \bar \tau)$ given by $\bar F(\mu)=\bar\psi(x, y)\mu(dx, dy)$ is an homeomorphism.
\end{lemma}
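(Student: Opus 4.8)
Lemma \ref{homeo 2}: the map $\bar F(\mu)=\bar\psi(x,y)\mu(dx,dy)$ is a homeomorphism from $(\mathcal{M}_p(E\times F),\bar\tau_p)$ onto $(\mathcal{M}(E\times F),\bar\tau)$.

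My plan is to follow exactly the template already used (and cited as standard) for Lemma \ref{homeo 1}, simply replacing the growth function $\psi$ by $\bar\psi$ and the pair of topologies $(\tau_p,\tau_0)$ by $(\bar\tau_p,\bar\tau)$. First I would check that $\bar F$ is a well-defined bijection: if $\mu\in\mathcal{M}_p(E\times F)$ then $\bar F(\mu)$ is a finite positive measure because $\int_{E\times F}\bar\psi\,d\mu<\infty$ by definition of $\mathcal{M}_p$; conversely, given $\nu\in\mathcal{M}(E\times F)$, the measure $\bar\psi^{-1}\nu$ is in $\mathcal{M}_p(E\times F)$ since $\bar\psi\ge 1$ (so $\bar\psi^{-1}$ is bounded, making $\bar\psi^{-1}\nu$ finite) and $\int\bar\psi\,d(\bar\psi^{-1}\nu)=\nu(E\times F)<\infty$; and these two constructions are mutually inverse. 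So $\bar F^{-1}(\nu)=\bar\psi^{-1}(x,y)\nu(dx,dy)$.

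Next I would establish continuity of $\bar F$ and of $\bar F^{-1}$ by pulling test functions through. The key observation is the bijection $\phi\leftrightarrow\bar\psi\phi$ between $M_{mc}(E\times F)$ and $M_{mc,p}(E\times F)$: if $\phi\in M_{mc,p}$ then $\phi/\bar\psi$ is bounded, measurable in the first variable, and continuous in the second (since $\bar\psi$ is continuous and bounded away from $0$), hence lies in $M_{mc}$; conversely if $\phi\in M_{mc}$ then $\bar\psi\phi\in M_{mc,p}$ because multiplying a bounded function by $\bar\psi$ produces exactly $\bar\psi$-growth. For any $\phi\in M_{mc,p}(E\times F)$ and any $\mu$,
\begin{equation*}
\int_{E\times F}\phi\,d(\bar F\mu)=\int_{E\times F}\phi\,\bar\psi\,d\mu=\int_{E\times F}(\phi\bar\psi)\,d\mu,
\end{equation*}
and $\phi\bar\psi\in M_{mc,p}(E\times F)$ again (it has $\bar\psi^2$-growth, but one can rather write the identity the other way: for $\chi\in M_{mc}(E\times F)$, $\int\chi\,d(\bar F\mu)=\int (\chi/\bar\psi)\bar\psi\,d\mu\cdot$— no, cleaner: for $\chi\in M_{mc}$, $\int\chi\,d\nu=\int (\chi/\bar\psi)\,d(\bar F^{-1}\nu\cdot\bar\psi)$). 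The cleanest route: a net $\mu_\alpha\to\mu$ in $\bar\tau_p$ means $\int\phi\,d\mu_\alpha\to\int\phi\,d\mu$ for all $\phi\in M_{mc,p}$; since every $\chi\in M_{mc}$ equals $\phi\bar\psi$ with $\phi=\chi/\bar\psi\in M_{mc,p}$, we get $\int\chi\,d(\bar F\mu_\alpha)=\int\phi\,d\mu_\alpha\to\int\phi\,d\mu=\int\chi\,d(\bar F\mu)$, i.e.\ $\bar F\mu_\alpha\to\bar F\mu$ in $\bar\tau$. The reverse implication is symmetric using $\bar F^{-1}$ and the fact that every $\phi\in M_{mc,p}$ equals $\chi/\bar\psi\cdot\bar\psi$ — precisely, $\int\phi\,d\mu=\int(\phi/\bar\psi)\,d(\bar F\mu)$ with $\phi/\bar\psi\in M_{mc}$ — so convergence in $\bar\tau$ of $\bar F\mu_\alpha$ gives convergence in $\bar\tau_p$ of $\mu_\alpha$. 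Hence $\bar F$ is a homeomorphism.

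I do not anticipate a genuine obstacle here: the proof is a routine transfer argument, and the only points needing a line of justification are (i) $\bar\psi$ being continuous, strictly positive, and bounded below by $1$ (immediate from its definition via the metric $d$), which is what makes $\phi\mapsto\phi/\bar\psi$ and $\phi\mapsto\bar\psi\phi$ preserve the ``measurable-in-$x$, continuous-in-$y$'' structure, and (ii) the integrability bookkeeping showing $\bar F$ and $\bar F^{-1}$ land in the right spaces. The mildest subtlety is making sure the growth classes match exactly — one should phrase the bijection of test-function spaces as the map $M_{mc}(E\times F)\ni\chi\mapsto \bar\psi\chi\in M_{mc,p}(E\times F)$ with inverse $\phi\mapsto\phi/\bar\psi$, and then read off both continuity statements from the single change-of-variables identity $\int\phi\,d(\bar F\mu)=\int(\phi\bar\psi)\,d\mu$ applied to $\phi$ ranging over $M_{mc}$ in one direction and the identity $\int\phi\,d\mu=\int(\phi/\bar\psi)\,d(\bar F\mu)$ with $\phi$ ranging over $M_{mc,p}$ in the other. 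Since the excerpt explicitly says the proofs of these technical results ``use standard arguments (see e.g.\ Appendix A in \cite{lacker2015} and Theorem 1 in \cite{gomes2010})'', I would keep the write-up to a short paragraph mirroring the proof of Lemma \ref{homeo 1}.
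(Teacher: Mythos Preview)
Your proposal is correct and follows exactly the standard transfer argument the paper alludes to; indeed, the paper does not give a proof of this lemma at all, stating only that the result follows by ``standard arguments (see e.g.\ Appendix A in \cite{lacker2015} and Theorem 1 in \cite{gomes2010})'', and your write-up is precisely such a standard argument, parallel to what one would do for Lemma~\ref{homeo 1}. The only cosmetic suggestion is to streamline the exposition by stating the bijection $M_{mc}\ni\chi\mapsto\bar\psi\chi\in M_{mc,p}$ once and then reading off both continuity directions from the identities $\int\chi\,d(\bar F\mu)=\int(\chi\bar\psi)\,d\mu$ and $\int\phi\,d\mu=\int(\phi/\bar\psi)\,d(\bar F\mu)$, rather than the back-and-forth in the middle paragraph.
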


\begin{remark}
By Proposition 2.10 in \cite{jacod1981}, the space $(\mathcal{M}(E\times F), \bar \tau)$ is metrizable, henceforth, $(\mathcal{M}_p(E\times F), \bar \tau_p)$ is also metrizable.
\end{remark}

\begin{proposition}\label{p stable}
Consider a sequence $(\mu_n)_n\subset \mathcal{M}_p(E\times F)$ converging to $\mu\in \mathcal{M}_p(E\times F)$ in $\tau_p$. If the set of measures 
$$\left\{ \int_{\cdot\times F} \bar\psi(x, y)\mu_n(dx, dy)\in \mathcal{M}(E): n\geq 1  \right\},$$
is relatively compact in $(\mathcal{M}(E), \sigma(\mathcal{M}(E), M_b(E)))$, then $(\mu_n)_n$ converges to $\mu$ in $\bar\tau_p$.
\end{proposition}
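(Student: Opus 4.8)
The plan is to reduce the statement about $\bar\tau_p$-convergence to a statement about ordinary stable convergence ($\bar\tau$) via the homeomorphism $\bar F$ of Lemma \ref{homeo 2}, and then to verify stable convergence by combining weak convergence with a tightness/relative compactness argument in the appropriate space of measures. Concretely, set $\nu_n := \bar F(\mu_n) = \bar\psi(x,y)\mu_n(dx,dy)$ and $\nu := \bar F(\mu)$. By Lemma \ref{homeo 2}, $\mu_n \to \mu$ in $\bar\tau_p$ is equivalent to $\nu_n \to \nu$ in $\bar\tau$, so it suffices to prove the latter. The hypothesis that $\mu_n \to \mu$ in $\tau_p$ is, by Lemma \ref{homeo 1} (applied on the product space $E\times F$ with weight $\bar\psi$), equivalent to $\nu_n \to \nu$ in $\tau_0$, i.e. weak convergence of the $\nu_n$. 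Finally the remaining hypothesis says precisely that the family of $E$-marginals $\{\nu_n(\cdot\times F) : n\geq 1\}$ is relatively compact in $(\mathcal{M}(E), \sigma(\mathcal{M}(E), M_b(E)))$.

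So the core task becomes: given positive finite measures $\nu_n$ on $E\times F$ with $\nu_n \to \nu$ weakly and with $E$-marginals relatively compact for the $\sigma(\mathcal{M}(E),M_b(E))$-topology, show $\nu_n\to\nu$ in the stable topology $\bar\tau$, i.e. $\int \phi\, d\nu_n \to \int\phi\,d\nu$ for every $\phi\in M_{mc}(E\times F)$ (bounded, measurable in $x$, continuous in $y$). First I would record that weak convergence already handles all $\phi\in C_b(E\times F)$, and in particular the total masses converge: $\nu_n(E\times F)\to\nu(E\times F)$. The standard route is to invoke the known characterization of stable convergence (e.g. from \cite{jacod1981}): for a sequence that converges weakly and whose $E$-marginals converge in $\sigma(\mathcal{M}(E),M_b(E))$, one upgrades to stable convergence by an approximation argument — approximate a general $\phi\in M_{mc}(E\times F)$ uniformly in $y$ on compacts by linear combinations $\sum_k g_k(x) h_k(y)$ with $g_k\in M_b(E)$, $h_k\in C_b(F)$, control the tails using tightness of the marginals (which follows from their relative compactness in $\sigma(\mathcal{M}(E),M_b(E))$ plus Prokhorov), and pass to the limit term by term: the product term $\int g_k(x)h_k(y)\,\nu_n(dx,dy)$ converges because, for fixed $h_k\in C_b(F)$, the pushforward measures $A\mapsto \int_{A\times F} h_k(y)\,\nu_n(dx,dy)$ converge in $\sigma(\mathcal{M}(E),M_b(E))$ — this last point is itself a consequence of weak convergence together with the marginal relative compactness. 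This is exactly the content that Corollary 2.9 of \cite{jacod1981} (already cited in the paper) is designed to deliver, so I would cite it at this step rather than reprove it.

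The main obstacle, and the step requiring the most care, is the passage from weak convergence plus marginal relative compactness to convergence against the multiplicatively-split test functions $g(x)h(y)$ with $g$ merely bounded measurable: weak convergence alone does not see discontinuous-in-$x$ integrands, and one genuinely needs the extra marginal compactness hypothesis to control the $x$-dependence. The clean way to organize this is to note that relative compactness of $\{\nu_n(\cdot\times F)\}$ in $\sigma(\mathcal{M}(E),M_b(E))$ forces these marginals to be uniformly tight and uniformly bounded, hence (Prokhorov) relatively weakly compact as well; then any subsequential stable limit of $(\nu_n)$ exists, and must coincide with $\nu$ because its $C_b(E\times F)$-integrals are pinned down by the already-known weak limit $\nu$. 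A subsequence argument then yields stable convergence of the whole sequence. One should also check the mild point that $\bar\tau$-convergence is metrizable (noted in the remark after Lemma \ref{homeo 2}), which legitimizes the subsequence-extraction reasoning. Putting these pieces together — homeomorphism reductions, the known stable-convergence characterization from \cite{jacod1981}, and a Prokhorov-type compactness argument to identify the limit — completes the proof.
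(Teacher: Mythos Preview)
Your proposal is correct and follows precisely the route the paper has set up: the paper does not actually prove this proposition (it is listed among the ``technical results, for which we do not provide the proofs since they use standard arguments''), but the homeomorphism Lemmas \ref{homeo 1} and \ref{homeo 2} are provided exactly so that one can reduce $\bar\tau_p$-convergence to ordinary stable convergence of the weighted measures $\nu_n=\bar\psi\mu_n$, and then invoke the Jacod--M\'emin machinery (relative $\bar\tau$-compactness from the marginal hypothesis, identification of the limit via the already-known weak convergence, metrizability to close the subsequence argument). This is the standard argument the paper alludes to, so there is nothing to add.
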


\section{Convergence in measure topology}\label{app conv m}

Let $(E, d)$ be a complete and separable metric space endowed with the Borel $\sigma$-algebra and let $M([0, T];E)$ be the space of Borel measurable functions $\phi:[0, T]\rightarrow E$ identified a.e. on $[0, T]$. The topology of convergence in measure in $M([0, T];E)$ is defined as the topology induced by the metric (see e.g. \cite{rossi2003})
$$d_M(\phi, \psi)=\int_0^T 1\wedge d(\phi(t), \psi(t))dt.$$
A sequence $(\phi_n)_{n\geq 1}$ converges to $\phi$ in $M([0, T];E)$ if and only if for all $\varepsilon>0$
$$\lim_{n\rightarrow\infty}\lambda(\{t\in [0, T]: d(\phi_n(t), \phi(t))\geq \varepsilon\})=0.$$
We recall that convergence in $M([0, T]; E)$ implies convergence of a subsequence in $E$ $t$-a.e. on $[0, T]$. The topology of convergence in measure remains invariant with respect to any metric inducing the same topology as $d$ on $E$.

\begin{lemma}\label{bound conv in meas}
Let $M_0\subset M([0, T];E)$ and assume that there exists a constant $C\geq 0$ such that for all $\phi, \psi\in M_0$ we have $d(\phi(t), \psi(t))\leq C$, $t$-a.e. on $[0, T]$. Then
$$d_{M_0}(\phi, \psi)=\int_0^Td(\phi(t), \psi(t))dt, \quad \phi, \psi\in M_0$$
metrizes the topology of convergence in measure in $M_0$.
\end{lemma}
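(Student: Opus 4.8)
The plan is to prove that $d_{M_0}$ and the metric $d_M(\phi,\psi)=\int_0^T 1\wedge d(\phi(t),\psi(t))\,dt$ — which induces the convergence in measure topology on $M([0,T];E)$, and hence on the subset $M_0$ — are bi-Lipschitz equivalent on $M_0$. Since bi-Lipschitz equivalent metrics generate the same topology, this will immediately give the statement.

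First I would verify that $d_{M_0}$ is a well-defined, finite metric on $M_0$. For $\phi,\psi\in M_0$ the map $t\mapsto d(\phi(t),\psi(t))$ is Borel measurable (it is the composition of the measurable map $t\mapsto(\phi(t),\psi(t))$ with the continuous function $d$), and by the standing hypothesis it is bounded by $C$ for $t$-a.e.\ $t$, so $d_{M_0}(\phi,\psi)\leq CT<\infty$. Symmetry, the triangle inequality, and the identity of indiscernibles (modulo the a.e.\ identification built into $M_0$) are inherited from the corresponding properties of $d$.

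Then I would compare the two metrics. Since $1\wedge a\leq a$ for all $a\geq 0$, one has $d_M(\phi,\psi)\leq d_{M_0}(\phi,\psi)$ on $M_0$. For the reverse bound, put $L:=\max(1,C)$ and note that for $t$-a.e.\ $t$, $d(\phi(t),\psi(t))\leq L\,\bigl(1\wedge d(\phi(t),\psi(t))\bigr)$: if $d(\phi(t),\psi(t))\leq 1$ the truncation is inactive and the factor $1$ suffices, while if $d(\phi(t),\psi(t))>1$ then $1\wedge d(\phi(t),\psi(t))=1$ and $d(\phi(t),\psi(t))\leq C\leq L$. Integrating over $[0,T]$ yields $d_{M_0}(\phi,\psi)\leq L\,d_M(\phi,\psi)$, and together with the previous inequality we obtain $d_M\leq d_{M_0}\leq L\,d_M$ on $M_0$.

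The conclusion is then immediate: the inequalities $d_M\leq d_{M_0}\leq L\,d_M$ show that $d_{M_0}$ metrizes the same topology as $d_M$ on $M_0$, namely the topology of convergence in measure. I do not anticipate a genuine obstacle; the one substantive point is the elementary observation that the uniform a.e.\ bound $d(\phi(t),\psi(t))\leq C$ is precisely what allows one to replace the truncated distance $1\wedge d$ by $d$ at the cost of a multiplicative constant — a trade that would fail for an unbounded family, where the truncation genuinely changes the topology.
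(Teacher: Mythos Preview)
Your proof is correct and actually slightly sharper than the paper's. You establish a bi-Lipschitz equivalence $d_M\leq d_{M_0}\leq \max(1,C)\,d_M$ on $M_0$, which immediately gives the equality of topologies. The paper instead argues sequentially: the implication $d_{M_0}\to 0\Rightarrow d_M\to 0$ is the same trivial bound you use, but for the converse it observes that if $d_M(\phi_n,\phi)\to 0$ then $t\mapsto d(\phi_n(t),\phi(t))$ converges to $0$ in measure and is a.e.\ bounded by $C$, hence converges in $L^1([0,T])$, i.e.\ $d_{M_0}(\phi_n,\phi)\to 0$. Your pointwise inequality $d\leq \max(1,C)(1\wedge d)$ bypasses this appeal to bounded convergence in measure and yields an explicit Lipschitz constant, which is a cleaner and more quantitative conclusion.
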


\begin{proof}
First note that for $(\phi_n)_{n\geq 1}\subset M_0$ and $\phi\in M_0$,
$$\lim_{n\rightarrow\infty} d_{M_0}(\phi_n, \phi)=0 \Rightarrow  \lim_{n\rightarrow\infty}d_{M}(\phi_n, \phi)=0.$$
The converse implication follows since the sequence $(d(\phi_n(\cdot), \phi(\cdot)))_{n\geq 1}$ converges in measure to $0$ and is bounded a.e. by $C$, which implies the convergence in $L^1([0, T])$ to $0$.
\end{proof}

\section{Probabilistic representation}\label{app prob rep OS}

In the case when admissible measures $m_t(dx)dt$ (resp. $\mu$) have the support included in some time-dependent domain $O$ (resp. its complement $O^c$), we obtain the following probabilistic representation.

\begin{theorem}\label{exit rep}
Suppose that Assumption \ref{assump existence OS} holds. Let $(\mu, m)\in \mathcal{R}$ and $O$ be an open subset of $[0, T]\times \bar{\mathcal{O}}$. Assume that $\mu(O)=0$ and $\int_{O^c}m_t(dx)dt=0$. By Theorem C.6 in \cite{dlt2021}, there exists $(\Omega, \mathcal{F}, \mathbb F, \mathbb P, W, \tau, X)$, such that $(\Omega, \mathcal{F}, \mathbb F, \mathbb P)$ is a filtered probability space, $W$ is an $\mathbb F$-Brownian motion, $\tau$ is an $\mathbb F$-stopping time such that $\tau\leq T\wedge\tau_\mathcal{O}^{X}$ $\mathbb P$-a.s.~and $X$ is an $\mathbb F$-adapted process verifying
$$X_{t}= X_0 + \int_0^{t}b(s, X_s)ds + \int_0^{t} \sigma(s, X_s)dW_s,\quad t\in [0, T], \quad \mathbb P \circ X_0^{-1}= m_0^*,$$
such that we have the following probabilistic representation of $(\mu, m)$:
$$\mu =\mathbb P \circ (\tau, X_\tau)^{-1},\quad \text{and}\quad m_t(B) = \mathbb E^{\mathbb P}\left[ \mathds{1}_B(X_t)\mathds{1}_{t< \tau}\right],  \quad B\in \mathcal{B}(\bar{\mathcal{O}}), \quad t-a.e.$$
Assume that $\tau_O=\tau_{\bar{O}}$ $\mathbb P$-a.s., 
where
$$\tau_O=\inf\{t\geq 0: (t, X_t)\notin O \}, \quad \tau_{\bar{O}}=\inf\{t\geq 0: (t, X_t)\notin \bar{O} \}.$$
Then $\tau=\tau_O$ $\mathbb P$-a.s.
\end{theorem}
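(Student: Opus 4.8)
The plan is to establish the two inequalities $\tau_O\le\tau$ and $\tau\le\tau_{\bar O}$ separately and then sandwich $\tau$ between them using the hypothesis $\tau_O=\tau_{\bar O}$. Only the inequality $\tau_O\le\tau$ uses $\mu(O)=0$, only $\tau\le\tau_{\bar O}$ uses $\int_{O^c}m_t(dx)\,dt=0$, and the hypothesis $\tau_O=\tau_{\bar O}$ is what glues the two halves together.

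First I would exploit $\mu(O)=0$. Since $\mu=\mathbb P\circ(\tau,X_\tau)^{-1}$, this gives $(\tau,X_\tau)\notin O$ $\mathbb P$-a.s.; because $\tau\le T\wedge\tau_{\mathcal O}^X$, the pair $(\tau,X_\tau)$ takes values in $[0,T]\times\bar{\mathcal O}$, so $\tau$ belongs $\mathbb P$-a.s.\ to the set $\{t\ge0:(t,X_t)\notin O\}$, and by definition of $\tau_O$ as an infimum, $\tau_O\le\tau$ $\mathbb P$-a.s.

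Next I would convert $\int_{O^c}m_t(dx)\,dt=0$ (with $O^c$ the complement in $[0,T]\times\bar{\mathcal O}$) into a pathwise statement. Substituting the representation $m_t(B)=\mathbb E^{\mathbb P}[\mathds 1_B(X_t)\mathds 1_{t<\tau}]$ (valid for a.e.\ $t$) and integrating in $t$, Tonelli's theorem yields $\mathbb E^{\mathbb P}\big[\,\text{Leb}(\{t\in[0,\tau):(t,X_t)\notin O\})\,\big]=0$, so $\mathbb P$-a.s.\ the random set of times $t<\tau$ with $(t,X_t)\notin O$ is Lebesgue-null, hence its complement in $[0,\tau)$ is dense in $[0,\tau)$. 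Then, using the continuity of $s\mapsto X_s$ and that $O$ is open, for every $t<\tau$ I would pass to the limit along a sequence of such ``good'' times $t_n\to t$ (using that $(t_n,X_{t_n})\in O$ and $(t_n,X_{t_n})\to(t,X_t)$) to deduce $(t,X_t)\in\bar O$ for \emph{every} $t<\tau$ on a full-probability event (the case $\tau=0$ being trivial). Therefore no $t<\tau$ lies in $\{s\ge0:(s,X_s)\notin\bar O\}$, giving $\tau_{\bar O}\ge\tau$ $\mathbb P$-a.s. Combining with the first step and the hypothesis $\tau_O=\tau_{\bar O}$ gives $\tau_O\le\tau\le\tau_{\bar O}=\tau_O$, i.e.\ $\tau=\tau_O$ $\mathbb P$-a.s.

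The main delicate point I anticipate — more a matter of care than a genuine obstacle — is the second step: the identity $m_t(B)=\mathbb E^{\mathbb P}[\mathds 1_B(X_t)\mathds 1_{t<\tau}]$ holds only for Lebesgue-a.e.\ $t$, so one must first integrate in $t$ to obtain a statement about a null \emph{random set} of times, and only afterwards invoke path-continuity of $X$ to upgrade ``$(t,X_t)\in O$ for a.e.\ $t<\tau$'' to ``$(t,X_t)\in\bar O$ for all $t<\tau$''. This is also exactly where the assumption $\tau_O=\tau_{\bar O}$ is indispensable: without it, $X$ could touch $\partial O\subseteq O^c$ at a Lebesgue-null set of times before $\tau$, so that $\tau_O<\tau$ while $\tau_{\bar O}=\tau$ still holds, and the conclusion would fail.
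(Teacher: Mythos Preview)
Your proof is correct and follows essentially the same route as the paper: first $\mu(O)=0\Rightarrow\tau_O\le\tau$, then $\int_{O^c}m_t(dx)\,dt=0$ together with continuity of $t\mapsto(t,X_t)$ gives $\tau\le\tau_{\bar O}$, and finally the sandwich via $\tau_O=\tau_{\bar O}$. The only cosmetic difference is that the paper argues the second inequality by contradiction (if $\tau>\tau_{\bar O}$, continuity and openness of $\bar O^{\,c}$ produce a whole interval of ``bad'' times, contradicting the null-measure condition), whereas you phrase the same continuity/density idea directly.
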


\begin{proof}
Let us show that $\tau=\tau_O$ $\mathbb P$-a.s. Using that $\mu$ is supported in $O^c$ we get
\begin{equation}\label{exit 1}
1=\mu(O^c)=\mathbb P((\tau, X_\tau)\in O^c).    
\end{equation}
Now, since $m_t(dx)dt$ is supported in $O$,
\begin{align*}
0=\int_{O^c}m_t(dx)dt=\mathbb E^{\mathbb P}\left[ \int_0^\tau \mathds{1}_{O^c}(t, X_t) dt \right], 
\end{align*}
which means that
\begin{equation}\label{exit 2}
(\mathbb P \otimes \lambda)(\{(\omega, t)\in \Omega\times [0, T]: (t, X_t(\omega))\in O^c, \; t< \tau(\omega)\})=0.    
\end{equation}
By equality \eqref{exit 1}, we have that $\tau_O\leq \tau$ $\mathbb P$-a.s. Let us show now that with probability $1$ we have $\tau\leq \tau_{\bar O}$, where $\tau_{\bar O}=\inf\{t\geq 0: (t, X_t)\notin \bar O \}$. Assume that $\mathbb P(\tau>\tau_{\bar O})>0$. Using the equality \eqref{exit 2},
$$\mathbb P(\lambda(\{t\in [0, T]: (t, X_t)\notin O, t<\tau\})=0)=1.$$
Define the set
$$B=\{\tau>\tau_{\bar O}\}\cap \{\lambda(\{t\in [0, T]: (t, X_t)\notin O, t<\tau\})=0\}\in\mathcal{F}.$$
Let us show that $B=\emptyset$, which will contradict the fact that $\mathbb P(B)>0$. It is sufficient to prove that for $\omega\in \{\tau>\tau_{\bar O}\}$, we have $\omega\notin \{\lambda(\{t\in [0, T]: (t, X_t)\notin O, t<\tau\})=0\}$. Let $\omega\in \{\tau>\tau_{\bar O}\}$ be fixed. Since $t\mapsto (t, X_t(\omega))$ is continuous, we have $(\tau_{\bar O}(\omega), X_{\tau_{\bar O}(\omega)}(\omega))\in \bar O$. Moreover, by the definition of the infimum there exists $\tau_1(\omega)\in ]\tau_{\bar O}(\omega), \tau(\omega)[$ such that $(\tau_1(\omega), X_{\tau_1(\omega)}(\omega))\notin \bar O$. Using again the continuity of $t\mapsto (t, X_t(\omega))$ and the fact that $({\bar O})^c$ is open, we can find $\tau_1^l(\omega)$ and $\tau_1^r(\omega)$ such that
$$\tau_{\bar O}(\omega)<\tau_1^l(\omega)< \tau_1(\omega)< \tau_1^r(\omega)<\tau(\omega) \quad \text{and}\quad \forall t\in ]\tau_1^l(\omega), \tau_1^r(\omega)[, \quad (t, X_t(\omega))\notin \bar O.$$
In particular,
$$\lambda(\{t\in [0, T]: (t, X_t(\omega))\notin O, t<\tau(\omega)\})\geq \lambda(]\tau_1^l(\omega), \tau_1^r(\omega)[)>0.$$
This shows that $B$ is empty and contradicts the fact that it has a positive probability. We conclude that $\mathbb P$-a.s. $\tau_O\leq \tau\leq \tau_{\bar O}$. Since by assumption we have $\tau_O=\tau_{\bar O}$ $\mathbb P$-a.s., we get $\tau=\tau_O$ $\mathbb P$-a.s.
\end{proof}

\section{Sufficient conditions for Assumption \ref{main assump OS}}\label{app suff cond OS}

In this section, we provide sufficient conditions on $b$, $\sigma$, $f$, $g$, $m_0^*$ and $\mathcal{O}$ such that Assumption \ref{main assump OS} is verified.

\begin{assumption}\label{suff conditions assump OS}
We assume the following:
\begin{enumerate}[(1)]
\item $\mathcal{O}=\mathbb R$.
\item $m_0^*$ has a continuous and positive density on $L^2(\mathbb R)$.
\item $\sigma$ is positive, does not depend on time and satisfies the uniform ellipticity condition. Moreover $\sigma\in C^{1}(\mathbb R)$,
$$\sup_{(t, x)\in [0, T]\times \mathbb R}\left[\frac{|b(t, x)|}{\sigma(x)} + |\partial_x\sigma(x)|\right]<\infty.$$
\item $f$ is of the form $f(t,x,m):=\bar{f}(t,x)$ and there exists $\bar c_f\geq 0$ such that for all $t\in [0, T]$, $x, x'\in \mathbb R$,
$$|\bar{f}(t, x)-\bar{f}(t, x')|\leq \bar c_f|x-x'|, \quad |\bar{f}(t, x)|\leq \bar c_f(1+|x|).$$
To be consistent with the notations used in Section \ref{sec OS}, we will keep the notation $f$ instead of $\bar{f}$.
\item $g$ has the form 
$$g(t, x, \mu)= g_1(t, x) g_2\left(\int_{[0, T]\times \mathbb R}g_1(s, y)\mu(ds, dy)\right) + g_3(t, x),$$
where $g_2$ is non-increasing. There exists $\bar c_g\geq 0$ and $\beta\in ]0, 1]$ such that for all $t, t'\in [0, T]$, $x, x'\in \mathbb R$,
$$|g_1(t, x)-g_1(t', x')|\leq \bar c_g(|t-t'| + |x-x'|), \quad |g_1(t, x)|\leq \bar c_g (1+|x|),$$
$$|g_2(x)-g_2(x')|\leq \bar c_g|x-x'|, \quad |g_2(x)|\leq \bar c_g (1+|x|),$$
$$|g_3(t, x)-g_3(t', x')|\leq \bar c_g(|t-t'|^\beta + |x-x'|), \quad |g_3(t, x)|\leq \bar c_g (1+|x|).$$

\item We assume
$g_1\in C^{1, 2}([0, T]\times \mathbb R)$, $g_3\in C^{1, 2}([0, T]\times \mathbb R)$, $\partial_xg_1\in C_b([0, T]\times \mathbb R)$ and $\partial_xg_3\in C_b([0, T]\times \mathbb R)$. Moreover, for each $t\in [0, T]$ and $\mu\in \mathcal{P}_p([0, T]\times \mathbb R)$,
$$x\mapsto \left(f + \partial_t g + \mathcal{L}g\right)(t, x, \mu)$$
is increasing. Finally, for each $t\leq t'$, $x\leq x'$ and $\mu\in \mathcal{P}_p([0, T]\times \mathbb R)$,
$$\left(f + \partial_t g +\mathcal{L}g\right)\left(t', x, \mu\right)\leq \left(f + \partial_t g +\mathcal{L}g\right)\left(t, x', \mu\right),\quad b(t', x)\leq b(t, x).$$
\end{enumerate}
\end{assumption}

The second and third condition of Assumption \ref{main assump OS} are easily verified under the above conditions. In the next Theorem, we prove that the first condition is also satisfied.

\begin{theorem}
Under the Assumptions \ref{assump existence OS} and \ref{suff conditions assump OS}, for each $(\bar \mu, \bar m) \in \mathcal{R}$, there exists a unique maximizer of 
$\Gamma[\bar \mu, \bar m]$ on $\mathcal{R}$.
\end{theorem}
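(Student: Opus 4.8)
The plan is to split the statement into existence and uniqueness. For existence, I would first simplify $\Gamma[\bar\mu,\bar m]$: under Assumption~\ref{suff conditions assump OS} we have $f(t,x,\bar m_t)=\bar f(t,x)$, and, setting $c_{\bar\mu}:=g_2\!\big(\int_{[0,T]\times\mathbb R}g_1(s,y)\bar\mu(ds,dy)\big)$ — a finite constant because $g_1$ has linear growth and $\bar\mu\in\mathcal P_p([0,T]\times\mathbb R)$ — we get $g(t,x,\bar\mu)=c_{\bar\mu}g_1(t,x)+g_3(t,x)=:G(t,x)$. Hence $\Gamma[\bar\mu,\bar m](\mu,m)=\langle\bar f,m\rangle+\langle G,\mu\rangle$ is \emph{affine} in $(\mu,m)$, and it is $\tau_p\otimes\tilde\tau_p$-continuous since $\bar f,g_1,g_3$ are continuous with linear (hence $p$-)growth and, on $\mathcal R$, $\tau_p\otimes\tilde\tau_p$ agrees with $\tau_p\otimes\bar\tau_p$ by Corollary~\ref{rho distance}. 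As $\mathcal R$ is convex and compact (Theorem~\ref{R compact OS}), the maximum is attained.

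For uniqueness I would reduce to a single-agent optimal stopping problem. Fix $(\bar\mu,\bar m)\in\mathcal R$, take any $(\mu,m)\in\mathcal R$ and its probabilistic representation $(\Omega,\mathcal F,\mathbb F,\mathbb P,W,\tau,X)$ from Theorem~\ref{exit rep}; since $\mathcal O=\mathbb R$, $\tau\le T$ $\mathbb P$-a.s. The function $G=c_{\bar\mu}g_1+g_3$ is $C^{1,2}$ with $\partial_xG$ bounded, so Itô's formula (the local martingale term being a true martingale by the bound on $\partial_xG$, the growth of $\sigma$ from Assumption~\ref{assump existence OS}, and the $q$-th moment bound of Lemma~\ref{estimates R OS}) gives $\mathbb E^{\mathbb P}[G(\tau,X_\tau)]=\int_{\mathbb R}G(0,x)m_0^*(dx)+\mathbb E^{\mathbb P}[\int_0^\tau(\partial_tG+\mathcal LG)(t,X_t)dt]$ — equivalently one may apply the constraint defining $\mathcal R$ to $G$ after a truncation. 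Combined with $\langle\bar f,m\rangle=\mathbb E^{\mathbb P}[\int_0^\tau\bar f(t,X_t)dt]$, and setting $\Phi:=\bar f+\partial_tG+\mathcal LG=(f+\partial_tg+\mathcal Lg)(\cdot,\cdot,\bar\mu)$, this yields
\begin{equation*}
\Gamma[\bar\mu,\bar m](\mu,m)=\int_{\mathbb R}G(0,x)\,m_0^*(dx)+\mathbb E^{\mathbb P}\Big[\int_0^\tau\Phi(t,X_t)\,dt\Big].
\end{equation*}
The first term does not depend on $(\mu,m)$, so maximizing $\Gamma[\bar\mu,\bar m]$ over $\mathcal R$ amounts to the optimal stopping problem $\sup_{\tau\le T}\mathbb E^{\mathbb P}[\int_0^\tau\Phi(t,X_t)dt]$, with value function $v(t,x):=\sup_{\tau\in[t,T]}\mathbb E[\int_t^\tau\Phi(s,X^{t,x}_s)ds]\ge0$, $v(T,\cdot)\equiv0$. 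Assumptions~\ref{assump existence OS} and \ref{suff conditions assump OS}(2)--(3) (nondegeneracy, time-independent regular $\sigma$, continuous positive initial density) provide enough regularity for $v$ to be continuous and to solve the associated obstacle problem, so $\mathcal D:=\{v=0\}$ is closed and $\mathcal C:=\{v>0\}$ open.

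I would then transfer the monotonicity of Assumption~\ref{suff conditions assump OS}(6). Since $x\mapsto X^{t,x}_s$ is nondecreasing (comparison for SDEs with Lipschitz coefficients) and $x\mapsto\Phi(t,x)$ is nondecreasing, testing $v(t,x')$ against the optimal rule for $v(t,x)$ shows $v(t,\cdot)$ is nondecreasing; using in addition that $\sigma$ is time-independent, $b(t',\cdot)\le b(t,\cdot)$ and $\Phi(t',\cdot)\le\Phi(t,\cdot)$ for $t\le t'$, a time-shift comparison shows $v(\cdot,x)$ is nonincreasing. Hence $\mathcal D=\{(t,x):x\le\gamma(t)\}$ for a nondecreasing boundary $\gamma:[0,T]\to[-\infty,+\infty]$, and $\tau^\star:=\inf\{t:X_t\le\gamma(t)\}=\inf\{t:v(t,X_t)=0\}$ is the smallest optimal stopping time. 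From the decomposition $v(t,X_t)+\int_0^t\Phi(s,X_s)ds=(\text{martingale})+\int_0^t\Phi(s,X_s)\mathds1_{\{X_s<\gamma(s)\}}ds$ (valid since $\partial_tv+\mathcal Lv=0$ on $\mathrm{int}\,\mathcal D$, $\partial_tv+\mathcal Lv+\Phi=0$ on $\mathcal C$, and $X$ spends $dt$-null time on $\{x=\gamma(t)\}$), a stopping time $\tau\le T$ is optimal iff $X_\tau\le\gamma(\tau)$ and $\Phi(s,X_s)\mathds1_{\{X_s<\gamma(s)\}}=0$ for $dt$-a.e.\ $s\in[0,\tau]$. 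Since $\Phi\le0$ on $\mathrm{int}\,\mathcal D$ by the obstacle problem, the remaining point is that the monotonicity of $\Phi$ forces $\Phi<0$ on $\mathrm{int}\,\mathcal D$, so the nondegenerate diffusion cannot spend positive time there and $\tau=\tau^\star$ $\mathbb P$-a.s. Finally, the SDE has a unique strong (hence weak) solution with $X_0\sim m_0^*$, so $\mathbb P\circ(\tau^\star,X_{\tau^\star})^{-1}$ and $\big(\mathbb E^{\mathbb P}[\mathds1_\bullet(X_t)\mathds1_{\{t<\tau^\star\}}]\big)_t$ do not depend on the representation; every maximizer of $\Gamma[\bar\mu,\bar m]$ therefore coincides with this pair, which gives uniqueness.

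The main obstacle is this last upgrade — from "$\tau^\star$ is the smallest optimal stopping time" to "$\tau^\star$ is the unique optimal stopping time" — i.e.\ excluding a subset of $\mathcal D$ of positive $dt\otimes dx$-measure on which $v$ and $\Phi$ both vanish and where $X$ could linger, since otherwise two optimal stopping times would induce distinct $(\mu,m)$. This is exactly where the full strength of the monotonicity hypotheses in Assumption~\ref{suff conditions assump OS}(6) and of the regularity/nondegeneracy hypotheses in Assumption~\ref{suff conditions assump OS}(2)--(3) must be used: to obtain sufficient regularity of $v$ and of the free boundary $\gamma$ (so that Itô's formula and the null-time-on-$\partial\mathcal D$ claim apply) and the strict negativity of $\Phi$ on $\mathrm{int}\,\mathcal D$. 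By contrast, the reduction to optimal stopping, the transfer of monotonicity, and the passage from uniqueness of $\tau^\star$ to uniqueness of $(\mu,m)$ are routine given Theorems~\ref{R compact OS} and \ref{exit rep} and Lemma~\ref{estimates R OS}.
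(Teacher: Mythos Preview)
Your overall architecture—reduce $\Gamma[\bar\mu,\bar m]$ via It\^o to the running-reward optimal stopping problem for $\Phi=f+\partial_tg+\mathcal Lg$, transfer the monotonicity hypotheses of Assumption~\ref{suff conditions assump OS}(6) to obtain a monotone free boundary $\gamma$, and conclude uniqueness of $(\mu,m)$ from uniqueness in law of $X$—is exactly the paper's strategy. Existence and the passage from a unique optimal stopping rule to a unique $(\mu,m)$ are done in the same way.

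The substantive difference is at the step you flag as the main obstacle. You propose to apply It\^o's formula to $v$ globally and read off the decomposition $v(t,X_t)+\int_0^t\Phi=M_t+\int_0^t\Phi\mathds 1_{\{X<\gamma\}}$, which presupposes $C^{1,2}$ regularity of $v$ across the free boundary (smooth fit). The paper avoids this: it works with the Snell envelope $U_t=v(t,X_t)+\int_0^t f$, uses its Doob--Meyer decomposition $U=M-A$ (no regularity of $v$ needed), and applies Tanaka's formula to $\zeta=v-g$ to identify $-A_t=\int_0^t\mathds 1_{\{\zeta=0\}}dZ_t+\tfrac12 L^0_t(\zeta)$ with $Z_t=\int_0^t\Phi$. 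The monotonicity of $\gamma$ is used to prove $\lambda_2(\partial\mathcal C)=0$, which via a result of Jacka forces $L^0(\zeta)\equiv 0$; optimality of $\tilde\tau$ then gives $\int_{\mathcal S}\Phi\,m_t(dx)dt=0$. Combined with the obstacle inequality $\Phi\le 0$ on $\mathcal S$ and the \emph{strict} monotonicity of $\Phi(t,\cdot)$, this yields $m_t(dx)dt(\mathcal S)=0$; the complementary statement $\mu(\mathcal C)=0$ follows from the supermartingale property of $U$. The paper then does \emph{not} argue that $\tau=\tau^\star$ directly, but invokes Theorem~\ref{exit rep} (which needs the boundary-regularity statement $\tau_{\mathcal C}=\tau_{\bar{\mathcal C}}$ a.s., checked via \cite{chen2011}) to conclude $\tau=\tau_{\mathcal C}$ for \emph{any} probabilistic representation of a maximizer.

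In short: your outline is correct and your identification of the obstacle is accurate, but the regularity of $v$ you would need for the direct It\^o argument is not available from the standing assumptions. The paper's Snell-envelope/Tanaka route sidesteps this, trading smooth fit for the much weaker fact that $\partial\mathcal C$ has Lebesgue measure zero (a consequence of the monotone boundary), and closes the argument at the level of measures via Theorem~\ref{exit rep} rather than by proving pointwise uniqueness of the optimal stopping time.
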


\begin{proof}
Fix $(\bar \mu, \bar m)\in \mathcal{R}$. Let us show that there exists a unique maximizer of $\Gamma[\bar \mu, \bar m]$ on $\mathcal{R}$. Under our assumptions, this map writes
$$\Gamma[\bar \mu, \bar m](\mu, m)=\int_0^T\int_\mathbb R f(t, x)m_t(dx)dt +\int_{[0, T]\times \mathbb R}g(t, x, \bar \mu)\mu(dt, dx).$$
We characterize the maximizer via a probabilistic approach, which allows to deduce the uniqueness result. Consider a complete probability space $(\Omega, \mathcal{F}, \mathbb P)$ supporting a Brownian motion $W=(W_t)_{t\in [0, T]}$. Given $t\in [0, T]$, we denote by $\mathbb{F}^t$ the filtration given by $\mathcal{F}_{s}^t=\sigma\left(W_r^t, t \leq r \leq s\right) \vee \mathcal{N}$, $s\geq t$. Here $\mathcal{N}$ is the set of $\mathbb P$-null sets and $W^t_s=W_s-W_t$, $s\geq t$, is the translated Brownian motion. Denote by $\mathcal{T}_t$ the set of stopping times with respect to this filtration with values in $[t, T]$. Consider the value function (to simplify the notation, we omit the dependence on $\bar \mu$)
\begin{align}\label{value_fct OS}
v(t, x)=\sup _{\tau \in \mathcal{T}_t} \mathbb{E}\left[\int_{t}^{\tau} f\left(s, X_{s}^{t, x}\right) ds + g(\tau, X_\tau^{t, x}, \bar \mu)\right].
\end{align}
where $X^{t, x}$ is the unique strong solution to the SDE
$$dX_s=b(s, X_s)ds + \sigma(X_s)dW_s;\,\, X_t=x.$$
We denote by $\mathcal{C}=\{(t, x)\in [0, T]\times \mathbb R: v(t, x)>g(t, x, \bar \mu)\}$ the continuation region, by $\bar{\mathcal{C}}$ its closure and by $\mathcal{S}=\{(t, x)\in [0, T]\times \mathbb R: v(t, x)=g(t, x, \bar \mu)\}$ the stopping region. We divide now the proof into several steps.\vspace{5pt}

\noindent\textit{First Step: Properties of the value function and optimal stopping boundary.} Under our assumptions, it can be shown by using standard arguments that the value function $v$ is jointly continuous. In particular, we deduce that the continuation region $\mathcal{C}$ is an open subset of $[0, T]\times \mathbb R$ and that the stopping region $\mathcal{S}$ is a closed subset of $[0, T]\times \mathbb R$.\vspace{5pt}

Consider $x, y\in \mathbb R$ such that $x\leq y$ and let $t\in [0, T]$. By the comparison theorem for SDEs, we get $X_s^{t, x}\leq X_s^{t, y}$,  $s\in [t, T]$. For any $\tau\in \mathcal{T}_t$, using that $z\mapsto \left(f + \partial_t g +\mathcal{L}g\right)(s, z, \bar \mu)$ is increasing for each $s\in [0, T]$, a direct application of Itô's formula to $g$ gives
\begin{multline*}
\mathbb{E}\left[\int_{t}^{\tau} f\left(s, X_{s}^{t, x}\right) ds + g(\tau, X_\tau^{t, x}, \bar\mu)\right] -g(t, x, \bar\mu)\\
\leq \mathbb{E}\left[\int_{t}^{\tau} f\left(s, X_{s}^{t, y}\right) ds + g(\tau, X_\tau^{t, y}, \bar\mu)\right]-g(t, y, \bar\mu).    
\end{multline*}
Taking the supremum over $\tau\in \mathcal{T}_t$, we get that, for each $t\in [0, T]$, the function 
$x\mapsto v(t, x)-g(t, x, \bar \mu)$ 
is nondecreasing. Therefore we can define an extended real-valued function $c:[0, T]\rightarrow [-\infty, \infty]$ by $c(t)=
\inf\{x\in\mathbb R:v(t, x)>g(t, x, \bar \mu)\}$, if the set $\{x\in\mathbb R:v(t, x)>g(t, x, \bar \mu)\}$ is non-empty and lower bounded, $c(t)=-\infty$ if the set $\{x\in\mathbb R:v(t, x)>g(t, x, \bar \mu)\}$ is non-empty and not lower bounded and $c(t)=\infty$ if the set $\{x\in\mathbb R:v(t, x)>g(t, x, \bar \mu)\}$ is empty. Using the continuity of $v$ and $g$, we have that for each $t$, $\{x\in\mathbb R:v(t, x)>g(t, x, \bar \mu)\}=]c(t), \infty[$, allowing to deduce that
$$\mathcal{C}=\{(t, x)\in [0, T]\times \mathbb R: x> c(t)\}, \quad \mathcal{S}=\{(t, x)\in [0, T]\times \mathbb R: x\leq  c(t)\}.$$
Since $\mathcal{S}$ is the hypograph of $c$ and it is a closed set, we deduce that $c$ is upper semicontinuous. \vspace{5pt}

Consider the filtration $\mathbb G^t=(\mathcal{G}^t_s)_{s\in [0, T-t]}$ given by $\mathcal{G}^t_s=\mathcal{F}^t_{t+s}, \quad s\in [0, T-t]$. We denote by $\mathcal{S}_t$ the set of stopping times with respect to this filtration with values in $[0, T-t]$. One can show that
\begin{equation}\label{value t mon}
v(t, x)=\sup_{\tau\in \mathcal{S}_t}\mathbb E\left[\int_0^\tau f(t+s, X_{t+s}^{t, x})ds + g(t + \tau, X_{\tau + t}^{t, x}, \bar \mu)\right].    
\end{equation}
Observe that, for a fixed time $t$, the stopping times in $\mathcal{S}_t$ and the process $X^{t, x}$ are functionals of the translated Brownian motion $(W^t_s)_{s\in [0, T-t]}:=(W_{t+s}-W_t)_{s\in [0, T-t]}$. Since $(W^t_s)_{s\in [0, T-t]}$ and $(W_s)_{s\in [0, T-t]}$ have the same law,  without  loss of generality we can assume that the stopping times in $\mathcal{S}_t$ are with respect to the (completed) filtration of $W$ (we keep the same notation) and replace $X^{t, x}_{t+\cdot}$ in \eqref{value t mon} by the process $Y^{0,x}[t]$ which follows the dynamics
$$Y_{s}^{0,x}[t]=x+ \int_0^s b(t+u, Y_{u}^{0, x}[t])du + \int_0^s\sigma(Y_{s}^{0, x}[t])dW_u.$$
We have $\mathcal{S}_{t'}\subset \mathcal{S}_t$ for $t\leq t'$. Consider $t, t'\in [0, T]$ such that $t\leq t'$ and let $x\in \mathbb R$, we are going to show that $v(t', x) - g(t', x, \bar\mu) \leq v(t, x) - g(t, x, \bar\mu)$. By the comparison theorem for SDEs, we get
$$\mathbb P\left( Y_{s}^{0,x}[t]\geq Y_{s}^{0,x}[t'], \quad \forall s\in [0, T-t'] \right)=1.$$
In fact, let $b_1(s, x)=b(t +s, x)$ and $b_2(s, x)=b(t' +s, x)$. For all $(s, x)\in [0, T-t']\times \mathbb R$ we have $b_1(s, x)\geq b_2(s, x)$.
Let $\tau'$ be an optimal stopping time for 
$$v(t', x) = \sup_{\tau\in \mathcal{S}_{t'}}\mathbb E\left[\int_0^\tau f(t'+s, Y_{s}^{0,x}[t'])ds + g(t' + \tau, Y_{\tau}^{0,x}[t'], \bar \mu)\right].$$
Since $\mathcal{S}_{t'}\subset\mathcal{S}_t$, we get $\tau'\in \mathcal{S}_t$, henceforth 
\begin{align*}
&(v-g)(t', x, \bar\mu) - (v-g)(t, x, \bar\mu)\\
&\leq \mathbb{E}\left[\int_{0}^{\tau'} \left[\left(f + \partial_t g +\mathcal{L}g\right)\left(t'+s, Y_{s}^{0,x}[t'], \bar \mu\right) - \left(f + \partial_t g +\mathcal{L}g\right)\left(t+s, Y_{s}^{0,x}[t], \bar \mu\right)\right] ds\right] \\
&\leq 0.
\end{align*}
We deduce that $c$ is non-decreasing.\vspace{5pt}

Since $c$ is upper semicontinuous and non-decreasing, it is right-continuous. In particular, the set $D$ of discontinuities of $c$ is countable. The optimal stopping boundary writes
$$\partial\mathcal{C}=\{(t, x)\in [0, T]\times \mathbb R:x=c(t)\}\cup \bigcup_{t\in D}(\{t\}\times[c(t-), c(t)]).$$
Since the set of discontinuities of $c$ is countable and denoting by $\lambda_2$ the Lebesgue measure in $\mathbb R^2$, we get
\begin{align*}
\lambda_2(\partial \mathcal{C})&\leq \lambda_2(\{x=c(t)\})+\sum_{t\in D}\lambda_2(\{t\}\times[c(t-), c(t)])= \int_{[0, T]\times \mathbb R}\mathds{1}_{x=c(t)}\lambda_2(dt, dx)\\
&= \int_0^T \lambda(\{c(t)\})\mathds{1}_{c(t)\in \mathbb R}dt =0.
\end{align*}

\vspace{5pt}

\noindent
\textit{Second Step: Properties of the maximizers with respect to the continuation and stopping regions.} Let $(\mu, m)$ be a maximizer of $\Gamma[\bar \mu, \bar m]$ in $\mathcal{R}$. We show that $\int_\mathcal{S}m_t(dx)dt$ and $\mu(\mathcal{C})=0$. By Theorem C.6 in \cite{dlt2021}, there exist a filtered probability space $(\tilde \Omega, \tilde{\mathcal{F}}, \tilde{\mathbb F}, \tilde{\mathbb P})$, an $\tilde{\mathbb F}$-adapted process $\tilde{X}$, an $\tilde{\mathbb F}$-stopping time $\tilde \tau$, and an $\tilde{\mathbb F}$-Brownian motion $\tilde W$, such that
$$\tilde X_{t}= \tilde X_0 + \int_0^{t} b(s, \tilde X_s)ds + \int_0^{t} \sigma(\tilde X_s)d\tilde W_s, \quad \tilde {\mathbb P} \circ \tilde X_0^{-1}= m_0^*,$$
$$\mu =\tilde{\mathbb P} \circ (\tilde\tau, \tilde X_{\tilde\tau})^{-1},\quad m_t(B)= \mathbb E^{\tilde{\mathbb P}}\left[ \mathds{1}_B(\tilde X_t)\mathds{1}_{t< \tilde \tau}\right],  \quad B\in \mathcal{B}(\mathbb R), \quad t-a.e.$$
Consider the optimal stopping problem in this probabilistic set up:
\begin{equation}\label{OS tilde}
\tilde v(t, x)=\sup _{\tau \in \mathcal{T}_t^{\tilde{\mathbb F}}} \mathbb{E}^{\tilde{\mathbb P}}\left[\int_{t}^{\tau} f(s, \tilde X_{s}^{t, x}) ds + g(\tau, \tilde X_\tau^{t, x}, \bar \mu)\right],    
\end{equation}
where $\tilde X_{s}^{t, x}$ satisfies the SDE
\begin{equation}\label{sde tilde}
d\tilde X_{s} = b(s, \tilde X_{s})ds + \sigma(\tilde X_{s})d\tilde W_s, \quad \tilde X_{t}^{t, x}=x,
\end{equation}
and $\mathcal{T}_t^{\tilde{\mathbb F}}$ is the set of stopping times with respect to $\tilde{\mathbb F}$ and valued in $[t, T]$. By Chapter I, Section 2, Corollary 2.9 in \cite{peskir2006}, the optimal stopping time is given by
$$\tilde \tau_{\tilde {\mathcal{C}}}^{t, x}:=\inf\{s\in [t, T]:\tilde v(s, \tilde X_s^{t, x})=g(s, \tilde X_s^{t, x}, \bar \mu)\}.$$
By uniqueness in law of the solution of the SDE \eqref{sde tilde}, we get $v(t, x)=\tilde v(t, x)$, for all $(t, x)\in [0, T]\times \mathbb R$. Consider the optimal stopping problem \eqref{OS tilde} at time $0$, assuming that $\tilde{\mathbb P} \circ \tilde X_0^{-1}= m_0^*$: 
\begin{equation}\label{opt stop m0}
\sup _{\tau \in \mathcal{T}_0^{\tilde{\mathbb F}}} \mathbb{E}^{\tilde{\mathbb P}}\left[\int_{0}^{\tau} f(s, \tilde X_{s}) ds + g(\tau, \tilde X_\tau, \bar \mu) \right].
\end{equation}
By measurability arguments (involving the optimal stopping time which is a measurable function of $\tilde X$), we get that 
$$\int_{\mathbb R}v(0, x)m_0^*(dx)=\mathbb E^{\tilde{\mathbb P}}[v(0, \tilde X_0)]= \sup _{\tau \in \mathcal{T}_0^{\tilde{\mathbb F}}} \mathbb{E}^{\tilde{\mathbb P}}\left[\int_{0}^{\tau} f(s, \tilde X_{s}) ds + g(\tau, \tilde X_\tau, \bar \mu)\right].$$
By Theorem 2.21 in \cite{dlt2021}, we get $\Gamma[\bar \mu, \bar m](\mu, m)=\int_\mathbb R v(0, x)m_0^*(dx)$. In particular, using the probabilistic representation of $(\mu, m)$, we deduce that $\tilde \tau$ is an optimal stopping time for \eqref{opt stop m0}. Define the following processes
$$U_t:=v(t, \tilde X_t) + \int_0^t f(s, \tilde X_s)ds,$$

$$Z_t:=\int_0^t \left(f + \partial_t g +\mathcal{L}g\right)(s, \tilde X_s, \bar \mu)ds,$$ and $$\tilde M_t := \int_0^t\sigma(\tilde X_s)\partial_x g(s, \tilde X_s, \bar\mu)d\tilde W_s.$$
Observe that, since $U$ is the Snell envelope of the process $\left(\int_0^tf(s, \tilde X_s)ds+g(t, \tilde X_t, \bar\mu)\right)_{t\in [0, T]}$, by the Doob-Meyer decomposition we get $U_t=M_t-A_t$, where $M$ is an $\tilde{\mathbb F}$-martingale and $A$ is a non-decreasing $\tilde{\mathbb F}$-predictable process with $A_0=0$. By Theorem D.13 in \cite{karatzas1998b}, since 
$$\left(\int_0^t f(s, \tilde X_s)ds + g(t, \tilde X_t, \bar \mu)\right)_{t\in [0, T]}$$
is a continuous process, we get that $A$ is continuous (and in particular $M$ is continuous) and
$$\int_0^T\mathds{1}_{\zeta_t>0}dA_t=0, \quad \tilde{\mathbb P}-a.s.,$$
where $\zeta_t:=v(t, \tilde X_t)-g(t, \tilde X_t, \tilde \mu)\geq 0$. By Tanaka's formula,
$$\zeta_t=\max(\zeta_t, 0)=\zeta_0 + \int_0^t\mathds{1}_{\zeta_s>0}d\zeta_s + \frac{1}{2}L^0_t(\zeta),$$
where $L^0(\zeta)$ is the local time of $\zeta$ in $0$. We deduce that
\begin{align*}
\zeta_t &= \zeta_0 + \int_0^t\mathds{1}_{\zeta_s>0}d(U_s-Z_s- \tilde M_s) + \frac{1}{2}L^0_t(\zeta)\\
&=  \zeta_0 + \int_0^t\mathds{1}_{\zeta_s>0}dM_s - \int_0^t\mathds{1}_{\zeta_s>0}d\tilde M_s - \int_0^t\mathds{1}_{\zeta_s>0}dA_s - \int_0^t\mathds{1}_{\zeta_s>0}dZ_s + \frac{1}{2}L^0_t(\zeta)\\
&=  \zeta_0 + \int_0^t\mathds{1}_{\zeta_s>0}dM_s - \int_0^t\mathds{1}_{\zeta_s>0}d\tilde M_s  - \int_0^t\mathds{1}_{\zeta_s>0}dZ_s + \frac{1}{2}L^0_t(\zeta).
\end{align*}
We finally get
\begin{align*}
U_t &=U_0 + Z_t + \tilde M_t + \int_0^t\mathds{1}_{\zeta_s>0}dM_s - \int_0^t\mathds{1}_{\zeta_s>0}d\tilde M_s - \int_0^t\mathds{1}_{\zeta_s>0}dZ_s + \frac{1}{2}L^0_t(\zeta)\\
&= U_0 + \int_0^t\mathds{1}_{\zeta_s>0}dM_s + \int_0^t\mathds{1}_{\zeta_s=0}d\tilde M_s + \int_0^t\mathds{1}_{\zeta_s=0}dZ_s + \frac{1}{2}L^0_t(\zeta).
\end{align*}
Using that
$$\left(U_0 + \int_0^t\mathds{1}_{\zeta_s>0}dM_s + \int_0^t\mathds{1}_{\zeta_s=0}d\tilde M_s\right)_{t\in [0, T]}$$
is a local martingale, by continuity of the processes and uniqueness of the semimartingale decomposition, we get
$$-A_t=\int_0^t\mathds{1}_{\zeta_s=0}dZ_s + \frac{1}{2}L^0_t(\zeta).$$
From the above, we deduce that the process 
$$\int_0^\cdot\mathds{1}_{v(t, \tilde X_t)=g(t, \tilde X_t, \bar \mu)}\left(f + \partial_t g +\mathcal{L}g\right)(t, \tilde X_t, \bar \mu)dt$$
is non-increasing. Therefore $t$-a.e. on $[0, T]$,
$$\mathds{1}_{v(t, \tilde X_t)=g(t, \tilde X_t, \bar \mu)}\left(f + \partial_t g +\mathcal{L}g\right)(t, \tilde X_t, \bar \mu)\leq 0.$$
In particular, $(t, x)\mapsto \mathds{1}_{\mathcal{S}}(t, x)\left(f + \partial_t g +\mathcal{L}g\right)(t, x, \bar \mu)$ is non-positive $m_t(dx)dt$-a.e. Since $\tilde\tau$ is optimal, $A_{\tilde\tau}=0$, i.e.
$$\int_0^{\tilde\tau}\mathds{1}_{v(t, \tilde X_t)=g(t, \tilde X_t, \bar \mu)}\left(f + \partial_t g +\mathcal{L}g\right)(t, \tilde X_t, \bar \mu)dt=-\frac{1}{2}L_{\tilde\tau}^0(\zeta).$$
Since the Lebesgue measure of $\partial\mathcal{C}$ is $0$, by Theorem 6 in \cite{jacka1993}, we have that the local time $L^0(\zeta)$ is indistinguishable from $0$. Henceforth, taking the expectation in the last equality, we get
$$\int_\mathcal{S}\left(f + \partial_t g +\mathcal{L}g\right)(t, x, \bar \mu)m_t(dx)dt=0.$$
To simplify notation, denote by $\nu(dt, dx)=m_t(dx)dt$ (which is absolutely continuous with respect to the Lebesgue measure in $[0, T]\times\mathbb R$, since $\sigma$ satisfies the uniform ellipticity condition). Now there exists a $\nu$-negligible set $N$ such that for all $(t, x)\in N^c$, $\mathds{1}_{\mathcal{S}}(t, x)\left(f + \partial_t g +\mathcal{L}g\right)(t, x, \bar \mu)\leq 0$. In particular, using that for all $t\in [0, T]$, $x\mapsto \left(f + \partial_t g +\mathcal{L}g\right)(t, x, \bar \mu)$ is increasing, if $(t, x)\in \mathring{\mathcal{S}} \cap N^c$, $\left(f + \partial_t g +\mathcal{L}g\right)(t, x, \bar \mu)<0$, where $\mathring{\mathcal{S}}$ denotes the interior of $\mathcal{S}$. Since $\partial \mathcal{C}$ has Lebesgue measure $0$, then $\partial \mathcal{S}$ has Lebesgue measure $0$ (see \cite{aliprantis2007} p. 27), and we obtain
\begin{align*}
0 & = \int_\mathcal{S}\left(f + \partial_t g +\mathcal{L}g\right)(t, x, \bar \mu)\nu(dt, dx)\\
& = \int_{[0, T]\times \mathbb R}\mathds{1}_{\mathring{\mathcal{S}} \cap N^c}(t, x)\left(f + \partial_t g +\mathcal{L}g\right)(t, x, \bar \mu)\nu(dt, dx).
\end{align*}
In other words, $\nu(\mathring{\mathcal{S}} \cap N^c)=0$, which implies $\int_\mathcal{S}m_t(dx)dt = \nu(\mathcal{S}) = \nu(\mathring{\mathcal{S}} \cap N^c) = 0$. Let us show now that $\mu(\mathcal{C})=0$. Using the supermartingale property of $U$,
$$\mathbb E^{\tilde{\mathbb P}}\left[v(\tilde \tau, \tilde X_{\tilde \tau}) + \int_0^{\tilde \tau} f(t, \tilde X_t)dt\right]\leq \mathbb E^{\tilde{\mathbb P}}[v(0, \tilde X_0)]$$
which implies
$$\int_{[0, T]\times \mathbb R}v(t, x)\mu(dt, dx) + \int_0^T \int_{\mathbb R}f(t, x)m_t(dx)dt \leq \int_\mathbb R v(0, x)m_0^*(dx).$$
The above inequality, together with $v \geq g$, leads to
$$\int_{[0, T]\times \mathbb R}(v-g)(t, x, \bar \mu)\mu(dt, dx)=0$$
and henceforth
$$0 = \int_{\mathcal{C}}(v-g)(t, x, \bar \mu)\mu(dt, dx) + \int_{\mathcal{S}}(v-g)(t, x, \bar \mu)\mu(dt, dx)=\int_{\mathcal{C}}(v-g)(t, x, \bar \mu)\mu(dt, dx).$$
Now, since $(t, x)\mapsto (v-g)(t, x, \bar \mu)>0$ on $\mathcal{C}$, we must have $\mu(\mathcal{C})=0$.\vspace{5pt}

\noindent
\textit{Third Step: Uniqueness of the maximizer.} Assume that $(\mu^1, m^1)\in \mathcal{R}$ and $(\mu^2, m^2)\in \mathcal{R}$ are two maximizers. By the previous step, for $i=1, 2$, $\int_{\mathcal{S}}m_t^i(dx) dt = 0$ and $\mu^{i}(\mathcal{C})=0$. By Theorem \ref{exit rep} (the assumption being verified using the same ideas as in Proposition 2 of \cite{chen2011} with some modifications adapted to our framework), there exist, for each $i=1, 2$, a filtered probability space $(\Omega^i, \mathcal{F}^i, \mathbb F^i, \mathbb P^i)$, an $\mathbb F^i$-adapted process $X^i$ and an $\mathbb F^i$-Brownian motion $W^i$ such that
$$X^i_{t}=X^i_0 + \int_0^{t}b(s, X^i_s)ds + \int_0^{t}\sigma(X^i_s)dW^i_s, \quad \mathbb P^i \circ (X_0^i)^{-1}= m_0^*,$$
$$\mu^i=\mathbb P^i\circ\left(\tau_\mathcal{C}^i, X^i_{\tau_\mathcal{C}^i}\right)^{-1},\quad m_t^i(B)= \mathbb E^{\mathbb P^i}\left[ \mathds{1}_B(X_t^i) \mathds{1}_{t< \tau_\mathcal{C}^i}\right],  \quad B\in \mathcal{B}(\mathbb R), \quad t-a.e.,$$
where $\tau_\mathcal{C}^i=\inf\{t\geq 0: (t, X_t^i)\notin \mathcal{C} \}$. By the pathwise uniqueness of the following SDE, 
$$dX_t=b(t, X_t)dt + \sigma(X_t)dW_t,$$
we get the uniqueness in law. This implies that $\mathbb P^1\circ (X^1)^{-1}=\mathbb P^2\circ (X^2)^{-1}=:P$ on $C([0, T])$. For $i=1, 2$, using that $\tau^{i}_\mathcal{C}$ is $\sigma(X^i)$-measurable, there exists a measurable map $\varphi^i:C([0, T])\rightarrow [0, T]$ such that $\tau^{i}_\mathcal{C}=\varphi^i(X^i)$. In particular, for any bounded and measurable function $\psi:C([0, T])\rightarrow \mathbb R$, $\mathbb E^{\mathbb P^1}[\psi(X^1)\varphi^1(X^1)]=\mathbb E^{\mathbb P^2}[\psi(X^2)\varphi^2(X^2)]$, that is 
$$\int_{C([0, T])}\psi(x)[\varphi^1(x)-\varphi^2(x)]P(dx)=0.$$
Taking $\psi=\varphi^1-\varphi^2$ we deduce that $\varphi^1=\varphi^2$ $P$-a.e. This is sufficient to conclude that $m^1=m^2$ and $\mu^1=\mu^2$.
\end{proof}
\begin{remark}
Note that the above theorem gives sufficient conditions which guarantee the representation of the unique best response as a pure solution. Furthermore, the stopping time involved in the probabilistic representation is a Markov stopping time.
\end{remark}

\section{Sufficient conditions for Assumption \ref{main assump SC}}\label{app suff cond SC}

In this section, we provide sufficient conditions on $b$, $\sigma$, $f$, $g$, $m_0^*$ and $\mathcal{O}$ such that Assumption \ref{main assump SC} is verified. We only prove the uniqueness of the best response, since it is immediate  to observe that the other conditions are satisfied.

\begin{assumption}\label{suff conditions assump SC}
We assume the following:
\begin{enumerate}[(1)]
\item $\mathcal{O}$ is a bounded open interval, $A$ is convex and $\sigma=1$.
\item $m_0^*$ admits a bounded density with respect to the Lebesgue measure.
\item $b(t, x, a)=b_1(t, x) + b_2(t, x)a$, with $b_1$ and $b_2$ continuous, Lipschitz in $x$ uniformly on $t$ and with linear growth.
\item For all $t\in [0, T]$, $x\in \bar{\mathcal{O}}$, $\eta\in \mathcal{P}^{sub}(\bar{\mathcal{O}})$, $a\in A$, $f(t, x, \eta, a)= f_1(t, x, \eta) + f_2(t, x, a)$. The function $f_1$ has the form
$$f_1(t, x, \eta) = \hat f(t, x) \bar f\left(t, \int_{\bar{\mathcal{O}}}\hat f(t, y)\eta(dy)\right),$$
where $\hat f$ and $\bar f$ are jointly measurable, bounded and continuous in $x$ for each $t$, and $z\mapsto \bar f(t, z)$ is non-increasing. Moreover, there exists $\bar c_f\geq 0$ such that for all $t\in [0, T]$, $x, x'\in \mathbb R$,
$$|\hat f(t, x)-\hat f(t, x')|\leq \bar c_f|x-x'|,\quad |\hat f(t, x)|\leq \bar c_f(1+|x|),$$
$$|\bar f(t, x)-\bar f(t, x')|\leq \bar c_f|x-x'|, \quad |\bar f(t, x)|\leq \bar c_f(1+|x|).$$
The function $f_2$ is jointly measurable, continuous in $(x, a)$ for each $t$, and for each $(t, x)$, $a\mapsto f_2(t, x, a)$ is strictly concave.
\item $g$ has the form 
$$g(t, x, \mu)= g_1(t, x) g_2\left(\int_{[0, T]\times \bar{\mathcal{O}}}g_1(s, y)\mu(ds, dy)\right) + g_3(t, x),$$
where $g_1$, $g_2$ and $g_3$ are continuous and $g_2$ is non-increasing. Moreover, there exists $\bar c_g\geq 0$ such that for all $t, t'\in [0, T]$, $x, x'\in \mathbb R$,
$$|g_1(t, x)-g_1(t', x')|\leq \bar c_g(|t-t'| + |x-x'|),\quad |g_2(x)-g_2(x')|\leq \bar c_g|x-x'|.$$
Finally,  for a fixed $\mu\in \mathcal{P}([0, T]\times \bar{\mathcal{O}})$, $(t, x)\mapsto g(t, x, \mu)\in C^{1, 2}([0, T]\times \bar{\mathcal{O}})$ and $g(t, x, \mu)=0$ for $(t, x)\in (0, T)\times \partial \mathcal{O}$.
\end{enumerate}
\end{assumption}

Let $(\bar \mu, \bar m)\in \mathcal{R}$. Consider a complete probability space $(\Omega, \mathcal{F}, \mathbb P)$ supporting a Brownian motion $W=(W_t)_{t\in [0, T]}$. For $t\in [0, T]$, we denote by $\mathbb{F}^t$ the filtration given by $\mathcal{F}_{s}^t=\sigma\left(W_r^t, t \leq r \leq s\right) \vee \mathcal{N}$, $s\geq t$. Here $\mathcal{N}$ is the set of $\mathbb P$-null sets and $W^t_s:=W_s-W_t$, $s\geq t$, is the translated Brownian motion. Denote by $\mathcal{A}_t$ the set of $\mathbb{F}^t$-progressively measurable process with values in $A$. Omitting the dependence on $\bar m$ and $\bar \mu$, consider the value function
\begin{align}\label{value_fct SC}
v(t, x)=\sup _{\alpha\in \mathcal{A}_t} \mathbb{E}\left[\int_{t}^{T\wedge\tau_\mathcal{O}} f\left(s, X_{s}^{t, x, \alpha}, \bar m_s^x, \alpha_s\right) ds + g\left(T\wedge\tau_\mathcal{O}, X_{T\wedge\tau_\mathcal{O}}^{t, x, \alpha}, \bar \mu\right)\right].
\end{align}
where $X^{t, x, \alpha}$ is the unique strong solution to the SDE
$$dX^{t, x, \alpha}_s=b(s, X^{t, x, \alpha}_s, \alpha_s)ds + dW_s; \,\,\ X_t^{t, x, \alpha}=x.$$

\vspace{10pt}
\noindent The following theorem is a particular case of Theorems 2.1 and 2.2, Chapter 4, in \cite{bensoussan1982}.

\begin{theorem}\label{theoremstrong}
Let Assumptions \ref{assump existence SC} and \ref{suff conditions assump SC} be satisfied. The value function $v$ is the unique solution belonging to $C([0, T]\times \bar{\mathcal{O}})\cap W^{1, 2, 2}((0, T)\times \mathcal{O})$\footnote{The Sobolev space $W^{1, 2, 2}((0, T)\times \mathcal{O})$ represents the set of functions $u$ such that $u$, $\partial_t u$, $\partial_x u$, $\partial_{xx}u \in L^2((0, T)\times \mathcal{O})$, where the derivatives are understood in the sense of distributions.}, satisfying the following Hamilton-Jacobi-Bellman equation (HJB)
\begin{equation}\label{HJB}
\begin{aligned} 
\frac{\partial v}{\partial t}(t, x)+\sup_{a\in A}\left[\mathcal{L} v(t, x, a) + f(t, x, \bar m_t^x, a)\right]=0,&\quad (t, x) \in(0, T) \times \mathcal{O}, \\ 
v(t, x)=0,&\quad (t, x) \in(0, T)\times \partial \mathcal{O}, \\ 
v(T, x)=g(T, x, \bar\mu),&\quad x \in \mathcal{O}. \end{aligned}
\end{equation}
\end{theorem}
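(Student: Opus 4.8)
The plan is to deduce the statement from the general theory of Hamilton–Jacobi–Bellman equations for controlled diffusions in bounded domains developed in \cite{bensoussan1982}, Chapter 4; the only work specific to the present setting is to check that the structural hypotheses required there follow from Assumptions \ref{assump existence SC} and \ref{suff conditions assump SC}. First I would rewrite \eqref{HJB}, using $\sigma\equiv 1$ (Assumption \ref{suff conditions assump SC}(1)), in the standard form
\[
\partial_t v(t,x) + \tfrac12 \partial_{xx} v(t,x) + H\big(t,x,\partial_x v(t,x)\big) = 0, \qquad H(t,x,p):=\sup_{a\in A}\big[b(t,x,a)\,p + f(t,x,\bar m_t^x,a)\big],
\]
with Dirichlet datum $v=0$ on $(0,T)\times\partial\mathcal O$ and terminal datum $v(T,\cdot)=g(T,\cdot,\bar\mu)$.

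Next I would verify the hypotheses one at a time. \emph{Uniform ellipticity and domain regularity}: $\sigma\equiv 1$ is uniformly elliptic and $\mathcal O$ is a bounded open interval, so its boundary consists of two points. \emph{Regularity and boundedness of $H$}: since $b(t,x,a)=b_1(t,x)+b_2(t,x)a$ with $b_1,b_2$ bounded and Lipschitz in $x$ uniformly in $t$ and $A$ is compact, $p\mapsto H(t,x,p)$ is Lipschitz with constant $\sup|b|<\infty$, and $H$ is measurable in $(t,x)$ and continuous in $x$ for each $t$; moreover, because $x$ ranges in the bounded set $\bar{\mathcal O}$ and $\bar m_t^x$ is a subprobability measure supported in $\bar{\mathcal O}$, the growth bound $|f(t,x,\eta,a)|\le c_2[1+|x|^p+\int_{\bar{\mathcal O}}|z|^p\eta(dz)]$ of Assumption \ref{assump existence SC}(4) gives $|f(t,x,\bar m_t^x,a)|\le C$ with $C$ depending only on $c_2$ and the diameter of $\mathcal O$, hence $|H(t,x,p)|\le C(1+|p|)$. \emph{Compatibility of the data}: by Assumption \ref{suff conditions assump SC}(5), $(t,x)\mapsto g(t,x,\bar\mu)\in C^{1,2}([0,T]\times\bar{\mathcal O})$ and $g(\cdot,\cdot,\bar\mu)\equiv 0$ on $(0,T)\times\partial\mathcal O$, so the terminal and lateral data agree at the corner $\{T\}\times\partial\mathcal O$ and, setting $\tilde v=v-g(\cdot,\cdot,\bar\mu)$, one reduces to a problem of the same type with homogeneous terminal and boundary data. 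With these checks in place, Theorem 2.1, Chapter 4 of \cite{bensoussan1982} produces a unique $v\in C([0,T]\times\bar{\mathcal O})\cap W^{1,2,2}((0,T)\times\mathcal O)$ solving \eqref{HJB}, and Theorem 2.2 there identifies it with the value function \eqref{value_fct SC}.

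The identification is a stochastic verification argument: apply the generalized (Itô–Krylov) formula to $s\mapsto v(s,X^{t,x,\alpha}_s)$ on $[t,T\wedge\tau_{\mathcal O}]$, which is licit because $v\in W^{1,2,2}$ and the non-degeneracy of the diffusion provides the occupation-time estimate needed to control the second-order term along the path; the pointwise inequality $\partial_t v+\mathcal L v(\cdot,a)+f(\cdot,\bar m^x_\cdot,a)\le 0$ for all $a\in A$ then gives $v(t,x)\ge$ the payoff for an arbitrary control, with equality along the feedback $\alpha^\star_s=a^\star(s,X^\star_s)$, where $a^\star(t,x):=\argmax_{a\in A}[b(t,x,a)\partial_x v(t,x)+f(t,x,\bar m^x_t,a)]$. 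This maximizer is well defined and Borel measurable because $A$ is convex, $a\mapsto b(t,x,a)$ is affine and $a\mapsto f_2(t,x,a)$ (hence $a\mapsto f(t,x,\eta,a)$) is strictly concave by Assumption \ref{suff conditions assump SC}(4). I expect the main obstacle to be exactly this Sobolev-regularity part of the verification — using the $W^{1,2,2}$ Itô formula along a diffusion stopped at $\partial\mathcal O$ and checking that $v$ attains its boundary values continuously up to the corner $\{T\}\times\partial\mathcal O$ — but this is precisely what is supplied by the cited results once the structural conditions above hold, so the proof ultimately amounts to verifying those conditions from Assumptions \ref{assump existence SC} and \ref{suff conditions assump SC}.
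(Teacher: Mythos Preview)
Your proposal is correct and takes essentially the same approach as the paper, which simply states that the theorem ``is a particular case of Theorems 2.1 and 2.2, Chapter 4, in \cite{bensoussan1982}'' without further elaboration. Your verification of the structural hypotheses (uniform ellipticity from $\sigma\equiv1$, boundedness and Lipschitz property of $H$ from the affine drift and compact $A$, compatibility of the boundary and terminal data from Assumption \ref{suff conditions assump SC}(5)) is more detailed than what the paper provides, but is exactly the check one needs to invoke those cited results.
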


\begin{theorem}\label{unique max sc}
Under Assumptions \ref{assump existence SC} and \ref{suff conditions assump SC}, there is a unique maximizer $(\mu^\star, m^\star)$ of $\Gamma[\bar\mu, \bar m]$ in $\mathcal{R}$.
\end{theorem}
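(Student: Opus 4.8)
The plan is to identify the unique maximizer through the value function $v$ of \eqref{value_fct SC} and its Hamilton--Jacobi--Bellman equation \eqref{HJB}, replacing the free--boundary analysis used in the optimal stopping case by the strict concavity of the Hamiltonian in the control. Fix $(\bar\mu,\bar m)\in\mathcal{R}$. By Theorem \ref{theoremstrong}, $v$ is the unique element of $C([0,T]\times\bar{\mathcal{O}})\cap W^{1,2,2}((0,T)\times\mathcal{O})$ solving \eqref{HJB}; in particular $v$ is bounded. Since $b(t,x,a)=b_1(t,x)+b_2(t,x)a$ is affine in $a$, $A$ is convex and compact, and $a\mapsto f_2(t,x,a)$ is strictly concave and continuous, for a.e.\ $(t,x)$ the map $a\mapsto \mathcal{L}v(t,x,a)+f(t,x,\bar m^x_t,a)$ is strictly concave, so its supremum on $A$ is attained at a single point $a^\star(t,x)$; a measurable selection argument makes $(t,x)\mapsto a^\star(t,x)$, hence also $\tilde b(t,x):=b(t,x,a^\star(t,x))$, Borel, and $\tilde b$ is bounded because $\mathcal{O}$ is bounded, $A$ is compact and $b$ has linear growth.

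\emph{Verification and identification of the maximizers.} Let $(\mu,m)\in\mathcal{R}$ and use its probabilistic representation (Theorem C.6 in \cite{dlt2021}): a filtered probability space carrying a Brownian motion $W$, a process $X$ with $\mathbb{P}\circ X_0^{-1}=m_0^*$, and a disintegration kernel $\nu_{t,x}$ with $m_t(dx,da)=\nu_{t,x}(da)m^x_t(dx)$, $m^x_t(B)=\mathbb{E}[\mathbf 1_B(X_t)\mathbf 1_{t<\tau_\mathcal{O}}]$, $\mu=\mathbb{P}\circ(\tau_\mathcal{O}\wedge T, X_{\tau_\mathcal{O}\wedge T})^{-1}$, and $dX_t=\bigl(\int_A b(t,X_t,a)\nu_{t,X_t}(da)\bigr)dt+dW_t$ on $[0,\tau_\mathcal{O}\wedge T]$. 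Because $\sigma\equiv1$ is nondegenerate, $v\in W^{1,2,2}$, and (by Krylov's estimate, using that $m_0^*$ has a bounded density and the drift of $X$ is bounded) $\mathbb{E}\int_0^{\tau_\mathcal{O}\wedge T}|\partial_x v(s,X_s)|^2\,ds<\infty$, the It\^o--Krylov formula applies and, together with \eqref{HJB}, shows that $U_t:=v(t\wedge\tau_\mathcal{O}\wedge T,X_{t\wedge\tau_\mathcal{O}\wedge T})+\int_0^{t\wedge\tau_\mathcal{O}\wedge T}\int_A f(s,X_s,\bar m^x_s,a)\nu_{s,X_s}(da)\,ds$ is a supermartingale whose bounded variation part is $-\int_0^{\cdot\wedge\tau_\mathcal{O}\wedge T}h_s\,ds$ with $h_s:=\sup_{a\in A}(\mathcal{L}v+f)(s,X_s,a)-\int_A(\mathcal{L}v+f)(s,X_s,a)\nu_{s,X_s}(da)\geq0$. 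Using that $v$ and $g(\cdot,\cdot,\bar\mu)$ both vanish on $(0,T)\times\partial\mathcal{O}$ and $v(T,\cdot)=g(T,\cdot,\bar\mu)$ (Assumptions \ref{suff conditions assump SC}), one has $v(\tau_\mathcal{O}\wedge T,X_{\tau_\mathcal{O}\wedge T})=g(\tau_\mathcal{O}\wedge T,X_{\tau_\mathcal{O}\wedge T},\bar\mu)$, so taking expectations at $t=T$ gives $\Gamma[\bar\mu,\bar m](\mu,m)=\mathbb{E}[U_T]=\int_{\mathcal{O}}v(0,x)m_0^*(dx)-\mathbb{E}\int_0^{\tau_\mathcal{O}\wedge T}h_s\,ds\leq\int_{\mathcal{O}}v(0,x)m_0^*(dx)$, with equality iff $\nu_{t,x}=\delta_{a^\star(t,x)}$ for $m^x_t(dx)\,dt$-a.e.\ $(t,x)$. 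The SDE $dX_t=\tilde b(t,X_t)\,dt+dW_t$ with $X_0\sim m_0^*$, absorbed at $\partial\mathcal{O}$, admits a solution (Girsanov, $\tilde b$ bounded) whose induced pair lies in $\mathcal{R}$ and achieves equality; hence $\max_{\mathcal{R}}\Gamma[\bar\mu,\bar m]=\int_{\mathcal{O}}v(0,\cdot)\,dm_0^*$ and the set of maximizers equals $\{(\mu,m)\in\mathcal{R}:\ \nu_{t,x}=\delta_{a^\star(t,x)}\ \text{a.e.}\}$.

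\emph{Uniqueness.} If $(\mu^i,m^i)$, $i=1,2$, are maximizers with representations $(X^i,\nu^i)$, then $\nu^i_{t,x}=\delta_{a^\star(t,x)}$ a.e., so $X^i$ on $[0,\tau^i_\mathcal{O}\wedge T]$ is a weak solution of $dX_t=\tilde b(t,X_t)\,dt+dW_t$ started from $m_0^*$ and absorbed at $\partial\mathcal{O}$. Since $\sigma\equiv1$ is nondegenerate and $\tilde b$ is bounded Borel, this SDE is well posed in law (Girsanov, or a Zvonkin--Veretennikov argument), so the law of the stopped path $(X^i_{\cdot\wedge\tau^i_\mathcal{O}},\tau^i_\mathcal{O})$ does not depend on $i$. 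Hence $\mu^1=\mathbb{P}^1\circ(\tau^1_\mathcal{O}\wedge T,X^1_{\tau^1_\mathcal{O}\wedge T})^{-1}=\mu^2$ and $m^{1,x}_t=m^{2,x}_t$ for a.e.\ $t$, whence $m^1_t(dx,da)=\delta_{a^\star(t,x)}(da)\,m^{1,x}_t(dx)=m^2_t(dx,da)$, proving uniqueness. Existence of a maximizer follows from the explicit construction above, or from compactness of $\mathcal{R}$ in $\tau_p\otimes\bar\tau_p$ together with upper semicontinuity of $\Gamma[\bar\mu,\bar m]$, exactly as in Theorem 2.14 of \cite{dlt2021}.

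\emph{Main obstacle.} The delicate point is the verification step: applying It\^o's formula to the merely Sobolev function $v$ along the relaxed--controlled, possibly non-Markovian process $X$. This rests on the It\^o--Krylov formula, available only because of the uniform ellipticity $\sigma^2\geq c_\sigma>0$ in Assumption \ref{assump existence SC}, combined with Krylov's estimate to give meaning to $\int\partial_x v(s,X_s)\,dW_s$ and to $\int_A(\partial_t v+\mathcal{L}v)(s,X_s,a)\nu_{s,X_s}(da)\,ds$. A related subtlety is that $a^\star$ is built from $\partial_x v\in L^2$ and is therefore only Borel, so the uniqueness-in-law statement for $dX=\tilde b(t,X)\,dt+dW$ cannot be obtained from Lipschitz theory and must rely on a Girsanov/Veretennikov-type argument.
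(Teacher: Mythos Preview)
Your proof is correct and reaches the same endpoint as the paper, but the route differs in one notable respect. The paper separates the reduction to strict controls from the identification of the optimal feedback: its first step is a purely primal Jensen argument---given any relaxed $(\mu,m)$ with kernel $\nu_{t,x}$, replace $m$ by $\tilde m_t(dx,da)=\delta_{\alpha(t,x)}(da)m^x_t(dx)$ with $\alpha(t,x)=\int_A a\,\nu_{t,x}(da)$; since $b$ is affine in $a$ the constraint is preserved with the \emph{same} $\mu$, and strict concavity of $f_2$ forces a strict reward increase unless $\nu$ was already a Dirac. Only afterwards does the paper invoke the HJB equation and an analogue of Theorem~2.29 in \cite{dlt2021} to pin down the strict control as $\alpha^\star$, and finally appeals to uniqueness in law (Karatzas--Shreve, Chapter~4, Proposition~3.10) for the absorbed SDE. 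You instead fold the first two steps into a single verification argument: apply It\^o--Krylov to $v(\cdot,X_\cdot)$ along the relaxed-controlled process and read off that the drift defect $h_s\geq 0$ vanishes exactly when $\nu_{t,x}=\delta_{a^\star(t,x)}$. Your approach is more unified and makes the role of the Hamiltonian transparent, at the cost of front-loading the Sobolev/It\^o--Krylov machinery that the paper postpones (and partly hides behind the citation to \cite{dlt2021}); the paper's Jensen step, by contrast, is elementary and does not touch $v$ at all. One small point worth tightening: when you invoke the probabilistic representation for a general $(\mu,m)\in\mathcal{R}$ and write $dX_t=\bigl(\int_A b(t,X_t,a)\nu_{t,X_t}(da)\bigr)dt+dW_t$, you are implicitly using that the affine structure of $b$ and the control-independence of $\sigma$ allow the (a priori possibly non-Markovian) relaxed control from Theorem~C.6 of \cite{dlt2021} to be replaced by the Markovian kernel coming from the disintegration of $m$---this is true here but deserves a sentence.
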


\begin{proof}
\textit{First Step: Optimality implies strict Markovian maximizer.} We first prove that the set of maximizers is contained in the set of measures associated to strict controls. Let $(\mu, m)\in \mathcal{R}$ and consider the transition kernel $(\nu_{t, x})_{t, x}\subset \mathcal{P}(A)$ such that
$$m_t(dx, da)dt=\nu_{t, x}(da)m_t^x(dx)dt.$$
Let $D(A)=\{\delta_a: a\in A\}$ be the set of Dirac masses on $A$ which is in $\mathcal{B}(\mathcal{P}(A))$ since it is closed (recall that $A$ is compact). Consider the Borel set $B=\{(t, x)\in [0, T]\times \bar{\mathcal{O}}: \nu_{t, x}\in D(A)\}$. Assume that the measure $m$ is not associated to a strict control, i.e. $\int_{B^c}m_t^x(dx)dt>0$, and let us show that we can construct a measure $\tilde m$ associated to a strict control which leads to a strictly higher reward. Define $\alpha(t, x):=\int_A a\nu_{t, x}(da)$ and $\tilde m_t(dx, da):=\delta_{\alpha(t, x)}(da)m_t^x(dx)$. The function $\alpha$ is measurable and takes values in $A$ since this set is convex and $\nu_{t,x}(\cdot) \in \mathcal{P}(A)$. Let $u\in C_b^{1, 2}([0, T]\times \bar{\mathcal{O}})$, we obtain
\begin{align*}
&\int_\mathcal{O} u(0, x)m_0^*(dx) + \int_0^T \int_{\bar{\mathcal{O}} \times A} \left(\frac{\partial u}{\partial t} +\mathcal L u\right) (t, x, a)\tilde m_t(dx,da)dt\\
&\quad =\int_\mathcal{O} u(0, x)m_0^*(dx) + \int_0^T \int_{\bar{\mathcal{O}} \times A} \left(\frac{\partial u}{\partial t} +\mathcal L u\right) (t, x, a) m_t(dx,da)dt\\
&\quad = \int_{\Sigma} u(t, x)\mu(dt, dx)
\end{align*}
Therefore, $(\mu, \tilde m)\in \mathcal{R}$. Now, since $a\mapsto f(t, x, a)$ is strictly concave, by Jensen's inequality we get
\begin{align*}
\Gamma[\bar \mu, \bar m](\mu, m)
&= \int_0^T\int_{\bar{\mathcal{O}}}\int_A f(t, x, \bar m_t^x, a)\nu_{t, x}(da)m_t^x(dx)dt + \int_{\Sigma} g(t, x, \bar \mu)\mu(dt, dx) \\
&< \int_0^T\int_{\bar{\mathcal{O}}} f(t, x, \bar m_t^x, \alpha(t, x))m_t^x(dx)dt + \int_{\Sigma} g(t, x, \bar \mu)\mu(dt, dx) \\
&= \Gamma[\bar \mu, \bar m](\mu, \tilde m),
\end{align*}
the inequality being strict since $\text{supp}(\nu_{t, x})$ contains more than one element on $B^c$ and $\int_{B^c}m_t^x(dx)dt>0$. We have shown that for every $(\mu, m)\in \mathcal{R}$ there exists a strict admissible control with corresponding strictly higher reward, henceforth, the set of maximizers is contained on the set of LP solutions with strict control.\vspace{5pt}

\noindent
\textit{Second Step: Uniqueness of the Markovian strict control.} Let $(\mu, m)$ be a maximizer. By Step 1, we can write
$$m_t(dx, da)=\delta_{\alpha(t, x)}(da)m^{x}_t(dx),$$
for some measurable functions $\alpha$. Using Theorem \ref{theoremstrong} and applying an analogue proof as in Theorem 2.29 in \cite{dlt2021} we get
$\int_0^T\int_{\bar{\mathcal{O}}\times A} F_v(t, x, \bar m_t^x, a)m_t(dx, da)dt=0$, where 
$$F_v(t, x, \bar m_t^x, a):=\frac{\partial v}{\partial t}(t, x)+(\mathcal{L} v)(t, x, a) + f(t, x, \bar m_t^x, a).$$ 
From the HJB equation, a.e. for all $a\in A$, we get $F_v(t, x, \bar m_t^x, a)\leq 0$. We deduce that $m_t^x(dx)dt$-a.e., 
$$\alpha(t, x)\in \argmax_{a\in A} F_v(t, x, \bar m_t^x, a).$$
Since for each $(t, x)\in [0, T]\times \bar{\mathcal{O}}$, $b(t, x, \cdot)$ is affine and $f(t, x, \bar m_t^x, \cdot)$ is strictly concave, there exists a unique maximizer $\alpha^\star(t, x)\in A$ of
\begin{equation}\label{hamiltonian}
A\ni a\mapsto F_v(t, x, \bar m_t^x, a).
\end{equation}
Therefore we get $\alpha(t, x)=\alpha^\star(t, x)$ $m^x_t(dx)dt$-a.e. Without loss of generality we can assume $\alpha=\alpha^\star$ because $\alpha^\star$ is measurable (it is a particular case of Theorem 18.19 in \cite{aliprantis2007}). \vspace{5pt}

\noindent \textit{Third Step: Uniqueness.} If $(\mu^1, m^1)$ and $(\mu^2, m^2)$ are two maximizers, by the second step,
$$m_t^k(dx, da)dt=\delta_{\alpha^\star(t, x)}m_t^{x, k}(dx)dt, \quad k=1, 2.$$
Now, by Theorem C.6 in \cite{dlt2021}, for $k=1, 2$, there exist a filtered probability space $(\Omega^k, \mathcal{F}^k, \mathbb F^k, \mathbb P^k)$, an $\mathbb F^k$-adapted process $X^k$, an $\mathbb F^k$-Brownian motion $W^k$ such that
$$X_{t\wedge \tau^k_\mathcal{O}}^k= X_0^k + \int_0^{t\wedge \tau^k_\mathcal{O}}b(s, X_s^k,\alpha^\star(s, X_s^k))ds + W_{t\wedge \tau^k_\mathcal{O}}^k, \quad \mathbb P^k \circ (X_0^k)^{-1}= m_0^*,$$
$$\mu^k =\mathbb P^k \circ (T\wedge \tau_\mathcal{O}^k, X_{T\wedge \tau_\mathcal{O}^k}^k)^{-1},$$
$$m_t^k(B\times C)= \mathbb E^{\mathbb P^k}\left[ \mathds{1}_B(X_t^k)\mathds{1}_{\alpha^\star(t, X_t^k)}(C) \mathds{1}_{t< T\wedge \tau_\mathcal{O}^k}\right],  \quad B\in \mathcal{B}(\bar{\mathcal{O}}), \quad C\in \mathcal{B}(A), \quad t-a.e.$$ 
By a similar proof as in Chapter 4, Proposition 3.10 in \cite{karatzas1998a}, we get
$$\mathbb P^1\circ \left(X_{\cdot\wedge \tau^1_\mathcal{O}}^1, T\wedge \tau^1_\mathcal{O}\right)^{-1}=\mathbb P^2\circ \left(X_{\cdot\wedge \tau^2_\mathcal{O}}^2, T\wedge \tau^2_\mathcal{O}\right)^{-1},$$
in $C([0, T])\times [0, T]$. As a consequence $\mu^1=\mu^2$ and $m^1=m^2$.
\end{proof}

\begin{corollary}[\textit{Pure solution representation of the best response}]\label{corollary pure control}
Under the Assumptions of Theorem \ref{unique max sc}, the unique best response $(\mu^\star, m^\star)$ can be represented as \textit{a pure solution}, i.e.$$\mu^\star=\mathbb P\circ \left(X_{T\wedge \tau_\mathcal{O}^X}, T\wedge \tau_\mathcal{O}^X\right)^{-1},$$
$$m_t^\star(B\times C)= \mathbb E^{\mathbb P}\left[ \mathds{1}_B(X_t)\mathds{1}_{\alpha^\star(t, X_t)}(C) \mathds{1}_{t< T\wedge \tau_\mathcal{O}^X}\right],  \quad B\in \mathcal{B}(\bar{\mathcal{O}}), \quad C\in \mathcal{B}(A), \quad t-a.e.$$
where $(\Omega, \mathcal{F}, \mathbb{P}, W)$ represents the initial probabilistic set up (see pg. 34), $\alpha^\star$ is the unique maximizer of the Hamiltonian \eqref{hamiltonian} and $X$ is the strong solution of the SDE associated to $\alpha^\star$. 
\end{corollary}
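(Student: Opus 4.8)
The plan is to build the candidate pure solution directly on the initial probabilistic set-up $(\Omega,\mathcal F,\mathbb P,W)$ introduced at the beginning of this section (rather than on an abstract space produced by Theorem C.6 in \cite{dlt2021}), verify that it is admissible and optimal, and then invoke the uniqueness established in Theorem \ref{unique max sc} to identify it with $(\mu^\star,m^\star)$. Recall from the second step of the proof of Theorem \ref{unique max sc} that the Hamiltonian \eqref{hamiltonian} admits, for each $(t,x)$, a unique maximiser $\alpha^\star(t,x)\in A$, and that $\alpha^\star$ can be chosen Borel measurable. The first task is therefore to solve, on the given space, the closed-loop stochastic differential equation
$$dX_t=b\big(t,X_t,\alpha^\star(t,X_t)\big)\,dt+dW_t,\qquad \mathbb P\circ X_0^{-1}=m_0^*,$$
and to define $\tau_\mathcal O^X$ as the first exit time of $X$ from $\mathcal O$.

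I expect the main obstacle to be the strong well-posedness of this equation, since $\alpha^\star$ is only Borel measurable, so that the drift $x\mapsto b(t,x,\alpha^\star(t,x))$ is merely measurable in $x$ (in the linear-programming approach no regularity of the optimal feedback is available). Here the structure of Assumption \ref{suff conditions assump SC} is decisive: the noise is additive and nondegenerate ($\sigma\equiv 1$), the problem is one dimensional, and the drift is bounded on the relevant time-state region (since $b_1,b_2$ have linear growth, $\mathcal O$ is a bounded interval, and $\alpha^\star$ takes values in the compact set $A$). Under these conditions the equation admits a unique strong solution on $(\Omega,\mathcal F,\mathbb P,W)$ by the Zvonkin--Veretennikov theory of SDEs with measurable drift and nondegenerate additive noise; in one dimension this is obtained through Zvonkin's transformation, which removes the irregular drift at the cost of a bi-Lipschitz change of space variable. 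This yields the adapted strong solution $X$ and, with it, the candidate measures
$$\mu:=\mathbb P\circ\big(X_{T\wedge\tau_\mathcal O^X},\,T\wedge\tau_\mathcal O^X\big)^{-1},\qquad m_t(B\times C):=\mathbb E^{\mathbb P}\big[\mathds 1_B(X_t)\mathds 1_{\alpha^\star(t,X_t)}(C)\mathds 1_{t<T\wedge\tau_\mathcal O^X}\big].$$

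It then remains to show that $(\mu,m)\in\mathcal R$ and that it maximises $\Gamma[\bar\mu,\bar m]$. For admissibility I would apply Dynkin's formula to $u(t,X_t)$ for $u\in C_b^{1,2}([0,T]\times\bar{\mathcal O})$ on the stochastic interval $[0,T\wedge\tau_\mathcal O^X]$; taking expectations kills the martingale part and produces exactly the linear constraint defining $\mathcal R$. For optimality I would run a verification argument based on the HJB equation of Theorem \ref{theoremstrong}: applying the It\^o--Krylov formula (justified by $v\in W^{1,2,2}$ together with the nondegeneracy of $\sigma$) to $v(t,X_t)$, and using that $F_v(t,X_t,\bar m_t^x,\alpha^\star(t,X_t))=0$ $dt$-a.e.\ along the trajectory together with the boundary conditions $v=0$ on $(0,T)\times\partial\mathcal O$ and $v(T,\cdot)=g(T,\cdot,\bar\mu)$, one obtains
$$\Gamma[\bar\mu,\bar m](\mu,m)=\int_{\mathcal O}v(0,x)\,m_0^*(dx)=\sup_{(\mu',m')\in\mathcal R}\Gamma[\bar\mu,\bar m](\mu',m'),$$
the last equality being the linear-programming value identity of \cite{dlt2021} relating the optimal reward to the value function. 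Hence $(\mu,m)$ is a maximiser, and by the uniqueness asserted in Theorem \ref{unique max sc} it coincides with $(\mu^\star,m^\star)$, which is precisely the claimed pure-solution representation.
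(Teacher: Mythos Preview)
The paper states this corollary without proof, treating it as a direct consequence of the proof of Theorem \ref{unique max sc}; in particular, Step 3 of that proof already exhibits the maximiser as the law of a process driven by the closed-loop drift $b(\cdot,\cdot,\alpha^\star(\cdot,\cdot))$ on \emph{some} filtered probability space obtained via Theorem C.6 of \cite{dlt2021}, and appeals to a result of Karatzas--Shreve type for uniqueness in law. The only content left for the corollary is to transport this representation to the \emph{fixed} probabilistic set-up $(\Omega,\mathcal F,\mathbb P,W)$, i.e.\ to upgrade from weak to strong solvability of the closed-loop SDE.

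Your proposal supplies exactly this missing piece, and does so correctly: you identify that the obstruction is the mere Borel measurability of $\alpha^\star$, and you invoke the Zvonkin--Veretennikov theory (bounded measurable drift, additive nondegenerate noise, one dimension) to obtain a strong solution on the given space. You then verify admissibility by Dynkin's formula and optimality via an It\^o--Krylov verification argument against the HJB solution $v\in W^{1,2,2}$ of Theorem \ref{theoremstrong}, before invoking uniqueness. This is a valid route and is in fact more explicit than the paper, which leaves these points to the reader. An equally natural alternative, closer in spirit to the paper's implicit argument, would be to start from the weak representation already established in Step 3 of Theorem \ref{unique max sc} and use Yamada--Watanabe (pathwise uniqueness, which follows from Zvonkin's transformation, plus weak existence) to conclude strong existence on the reference space; both arguments ultimately rest on the same well-posedness input.
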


\section{Technical lemmas}

In this section we give analogous results to Appendix F in \cite{dlt2021} in the case of test functions with polynomial growth.

\begin{lemma}\label{slutsky_p}
Let $\mathcal{X}$ and $\mathcal{Y}$ be complete separable metric spaces and let $\varphi:\mathcal{X}\times \mathcal{Y}\rightarrow\mathbb R$ be continuous and satisfying the following growth condition: there exist $c\geq 0$ and $(x_0, y_0)\in \mathcal{X}\times \mathcal{Y}$ such that for all $(x, y)\in \mathcal{X}\times \mathcal{Y}$
$$|\varphi(x, y)|\leq c (1 + d_\mathcal{X}(x, x_0)^p + d_\mathcal{Y}(y, y_0)^p).$$
Consider a sequence $(\nu^n)_{n\geq 1}\in \mathcal{M}_p(\mathcal{X})$ converging to $\nu\in \mathcal{M}_p(\mathcal{X})$ in $\tau_p$ such that there exists $C>0$ so that
$$\sup_{n\geq 1}\int_\mathcal{X}(1+d_\mathcal{X}(x, x_0)^p)\nu^n(dx)\leq C.$$
Consider also a sequence $(y^n)_{n\geq 1}\in \mathcal{Y}$ converging to $y\in \mathcal{Y}$ such that there exists a compact set $\mathcal{K}\subset\mathcal{Y}$ so that for all $n\geq 1$, $y^n\in \mathcal{K}$. Then, 
  $$\int_{\mathcal{X}}\varphi(x, y^n)\nu^n(dx)\underset{n\rightarrow\infty}{\longrightarrow} \int_{\mathcal{X}}\varphi(x, y)\nu(dx).$$
\end{lemma}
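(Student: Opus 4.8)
The plan is to reduce the statement to a Slutsky‑type argument. Writing
\[
\int_{\mathcal X}\varphi(x,y^n)\,\nu^n(dx)-\int_{\mathcal X}\varphi(x,y)\,\nu(dx)=I_n+II_n,
\]
with $I_n:=\int_{\mathcal X}\big(\varphi(x,y^n)-\varphi(x,y)\big)\,\nu^n(dx)$ and $II_n:=\int_{\mathcal X}\varphi(x,y)\,\nu^n(dx)-\int_{\mathcal X}\varphi(x,y)\,\nu(dx)$, it suffices to show that both terms vanish. For $II_n$ I would observe that $x\mapsto\varphi(x,y)$ is continuous and, by the growth assumption, $|\varphi(x,y)|/\psi(x)\le c\,(1+d_{\mathcal Y}(y,y_0)^p)$ uniformly in $x$, with $\psi(x)=1+d_{\mathcal X}(x,x_0)^p$; hence $\varphi(\cdot,y)\in C_p(\mathcal X)$, and since $\nu^n\to\nu$ in $\tau_p=\sigma(\mathcal M^s_p(\mathcal X),C_p(\mathcal X))$ we get $II_n\to0$ directly from the definition of the topology.

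The substance is in $I_n$. First I would record the structural consequences of $\nu^n\to\nu$ in $\tau_p$: by Proposition \ref{carac p convergence} the sequence converges weakly and satisfies $\lim_{r\to\infty}\limsup_{n}\int_{\{d_{\mathcal X}(x,x_0)^p\ge r\}}d_{\mathcal X}(x,x_0)^p\,\nu^n(dx)=0$; moreover, being a convergent sequence in the metrizable space $(\mathcal M(\mathcal X),\tau_0)$ with uniformly bounded masses, $\{\nu^n\}$ is tight by Prokhorov's theorem. Combining tightness, this uniform integrability, and Markov's inequality (to bound $\nu^n(\{d_{\mathcal X}(x,x_0)^p\ge r\})\le C/r$), I claim that for every $\varepsilon>0$ there is a compact set $K\subset\mathcal X$ with $\limsup_n\nu^n(\mathcal X\setminus K)\le\varepsilon$ and $\limsup_n\int_{\mathcal X\setminus K}d_{\mathcal X}(x,x_0)^p\,\nu^n(dx)\le\varepsilon$. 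Then I would split $I_n$ over $K$ and its complement. On $\mathcal X\setminus K$, using the growth bound together with $M:=\sup_{z\in\mathcal K}d_{\mathcal Y}(z,y_0)^p<\infty$ (valid since $\mathcal K$ is compact and $y^n,y\in\mathcal K$), the contribution is bounded by $(2c+2cM)\,\nu^n(\mathcal X\setminus K)+2c\int_{\mathcal X\setminus K}d_{\mathcal X}(x,x_0)^p\,\nu^n(dx)$, whose $\limsup$ is at most $C'\varepsilon$. On $K$, since $K\times\mathcal K$ is compact, $\varphi$ is uniformly continuous there, so $\sup_{x\in K}|\varphi(x,y^n)-\varphi(x,y)|\to0$ as $y^n\to y$, and the corresponding part of $I_n$ is bounded by $\nu^n(\mathcal X)\sup_{x\in K}|\varphi(x,y^n)-\varphi(x,y)|\le C\sup_{x\in K}|\varphi(x,y^n)-\varphi(x,y)|\to0$. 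Letting $\varepsilon\to0$ gives $I_n\to0$, and combining with $II_n\to0$ yields the claim.

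The main obstacle is the construction of the compact set $K$ that simultaneously captures almost all the mass and almost all the $p$-th moment, uniformly in $n$: this is precisely where both the tightness coming from weak convergence and the uniform‑integrability characterization of $\tau_p$-convergence (Proposition \ref{carac p convergence}) are genuinely needed — the mere uniform bound $\sup_n\int_{\mathcal X}(1+d_{\mathcal X}(x,x_0)^p)\,\nu^n(dx)\le C$ would not be enough, since mass carrying a fixed amount of $p$-th moment could escape to infinity. Everything else is a routine combination of uniform continuity on compacts and dominated convergence.
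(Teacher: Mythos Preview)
The paper does not actually supply a proof of this lemma: it merely states the result and refers the reader to analogous arguments in Appendix~F of \cite{dlt2021}, so there is no detailed proof to compare against. Your argument is correct and provides exactly the kind of details the paper omits. The decomposition into $I_n$ and $II_n$ is the natural one; $II_n\to 0$ is immediate from the definition of $\tau_p$, and your treatment of $I_n$ via uniform continuity of $\varphi$ on a product of compacts, together with a tail estimate controlled by the uniform $p$-integrability coming from Proposition~\ref{carac p convergence}, is sound.

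One small point worth making explicit is the construction of the compact set $K$ satisfying both of your requirements simultaneously: in a general Polish space closed balls need not be compact, so one cannot simply intersect with a large ball. The clean way is to first fix $r$ from the uniform integrability so that $\limsup_n\int_{\{d_{\mathcal X}(x,x_0)^p\ge r\}}d_{\mathcal X}(x,x_0)^p\,\nu^n(dx)\le\varepsilon$, and then use tightness to choose $K$ compact with $\limsup_n\nu^n(\mathcal X\setminus K)\le\varepsilon/r$; splitting $\mathcal X\setminus K$ according to whether $d_{\mathcal X}(x,x_0)^p\ge r$ or not then gives $\limsup_n\int_{\mathcal X\setminus K}d_{\mathcal X}(x,x_0)^p\,\nu^n(dx)\le 2\varepsilon$. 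You clearly have this in mind, but spelling it out removes any doubt. Also note that $y\in\mathcal K$ (needed for the uniform-continuity step) follows from $y^n\to y$ and $\mathcal K$ closed, which you use implicitly.
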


The next Lemma is related to Lemma A.3 of \cite{lacker2015}.

\begin{lemma}[Stable convergence: the $p$-growth case]\label{slutsky_stable_p}
Let $\Theta$, $\mathcal{X}$, $\mathcal{Y}$ be complete, separable metric spaces. Let $\eta\in \mathcal{M}_p(\Theta)$. Let $\varphi:\Theta \times \mathcal{X}\times \mathcal{Y}\rightarrow\mathbb R$, be a measurable map and assume that for every $t\in \Theta$, $\varphi(t, \cdot)$ is continuous. We assume the following growth condition on $\varphi$: there exists $c\geq 0$ and $(t_0, x_0, y_0)\in \Theta\times \mathcal{X}\times \mathcal{Y}$
$$|\varphi(t, x, y)|\leq c (1+d_\Theta(t, t_0)^p + d_\mathcal{X}(x, x_0)^p + d_\mathcal{Y}(y, y_0)^p).$$
Suppose that a sequence of measurable functions $\psi^n:\Theta\rightarrow \mathcal{Y}$ converges $\eta$-a.e. in $\Theta$ to a measurable function $\psi:\Theta\rightarrow \mathcal{Y}$ and that $(\nu^n_t(dx)\eta(dt))_{n\geq 1}\subset \mathcal{M}_p(\Theta\times \mathcal{X})$ converges to $\nu_t(dx)\eta(dt)\in \mathcal{M}_p(\Theta\times \mathcal{X})$ in $\bar \tau_p$, where $(\nu^n)_{n\geq 1}$ and $\nu$ are transition kernels from $\Theta$ to $\mathcal{X}$. Suppose also that there exists a constant $C>0$ such that $\eta$-a.e. 
$$\sup_{n\geq 1}\int_\mathcal{X}(1+d_\mathcal{X}(x, x_0)^p)\nu_t^n(dx)\leq C.$$ 
Moreover, suppose that there exists a compact set $\mathcal{K}\subset \mathcal{Y}$ such that for all $n\geq 1$, $\psi^n(t)\in \mathcal{K}$ $\eta$-a.e. Then,
$$\int_\Theta \int_{\mathcal{X}}\varphi(t, x, \psi^n(t))\nu^n_t(dx)\eta(dt)\underset{n\rightarrow\infty}{\longrightarrow} \int_\Theta \int_{\mathcal{X}}\varphi(t, x, \psi(t))\nu_t(dx)\eta(dt).$$
\end{lemma}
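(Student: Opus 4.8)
The plan is to split the quantity to be controlled into a "moving measure" part and a "moving label" part, treat the first by the very definition of $\bar\tau_p$‑convergence, and reduce the second to a uniform‑continuity argument on a compact set extracted from the hypotheses. First set $R:=\sup_{y\in\mathcal K}d_{\mathcal Y}(y,y_0)$, which is finite since $\mathcal K$ is compact; since $\psi^n(t)\to\psi(t)$ $\eta$‑a.e., $\psi^n(t)\in\mathcal K$ $\eta$‑a.e.\ and $\mathcal K$ is closed, also $\psi(t)\in\mathcal K$ $\eta$‑a.e. I would then write the difference of the two integrals as $I_n+II_n$, where
$$I_n:=\int_\Theta\int_{\mathcal X}\big(\varphi(t,x,\psi^n(t))-\varphi(t,x,\psi(t))\big)\nu^n_t(dx)\eta(dt),$$
$$II_n:=\int_\Theta\int_{\mathcal X}\varphi(t,x,\psi(t))\big(\nu^n_t(dx)\eta(dt)-\nu_t(dx)\eta(dt)\big),$$
and show that $I_n\to0$ and $II_n\to0$.

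The term $II_n$ is immediate: the map $\phi(t,x):=\varphi(t,x,\psi(t))$ is measurable on $\Theta\times\mathcal X$, continuous in $x$ for each $t$, and, after modifying $\psi$ on an $\eta$‑null set so that $\psi(t)\in\mathcal K$ for every $t$ (which does not change $II_n$), satisfies $|\phi(t,x)|\le c(1+R^p)\bar\psi(t,x)$; hence $\phi\in M_{mc,p}(\Theta\times\mathcal X)$, and $II_n\to0$ follows directly from the $\bar\tau_p$‑convergence of $\nu^n_t(dx)\eta(dt)$ to $\nu_t(dx)\eta(dt)$.

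For $I_n$ I would first upgrade the hypotheses. Since every $\tau_p$‑test function on $\Theta\times\mathcal X$ lies in $M_{mc,p}(\Theta\times\mathcal X)$, the $\bar\tau_p$‑convergence forces $\tau_p$‑convergence of $\nu^n_t(dx)\eta(dt)$ to $\nu_t(dx)\eta(dt)$; by Proposition \ref{carac p convergence} this yields weak convergence (hence, by Prokhorov, tightness of $\{\nu^n_t(dx)\eta(dt)\}_n$ on the Polish space $\Theta\times\mathcal X$) together with the uniform integrability \eqref{unif int p} of $(t,x)\mapsto d_\Theta(t,t_0)^p+d_{\mathcal X}(x,x_0)^p$. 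Combining these with the $\eta$‑a.e.\ bound $\int_{\mathcal X}(1+d_{\mathcal X}(x,x_0)^p)\nu^n_t(dx)\le C$ and with $\eta\in\mathcal M_p(\Theta)$, for every $\epsilon>0$ one can choose a \emph{compact} set $L\subset\Theta\times\mathcal X$ with
$$\sup_{n\ge1}\int_{L^c}\big(1+R^p+d_\Theta(t,t_0)^p+d_{\mathcal X}(x,x_0)^p\big)\nu^n_t(dx)\eta(dt)<\epsilon.$$
Let $\mathcal X_L:=\mathrm{proj}_{\mathcal X}(L)$, a compact subset of $\mathcal X$. Since $\{(t,x):x\notin\mathcal X_L\}\subset L^c$ and, using $\psi^n(t),\psi(t)\in\mathcal K$, $|\varphi(t,x,\psi^n(t))-\varphi(t,x,\psi(t))|\le 2c\big(1+R^p+d_\Theta(t,t_0)^p+d_{\mathcal X}(x,x_0)^p\big)$, the contribution of the region $\{x\notin\mathcal X_L\}$ to $|I_n|$ is at most $2c\epsilon$, uniformly in $n$. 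On the complementary region I would use that for $\eta$‑a.e.\ $t$ the continuous map $\varphi(t,\cdot,\cdot)$ is uniformly continuous on the compact set $\mathcal X_L\times\mathcal K$, so that $\kappa^n_L(t):=\sup_{x\in\mathcal X_L}|\varphi(t,x,\psi^n(t))-\varphi(t,x,\psi(t))|\to0$ as $n\to\infty$; since $\kappa^n_L$ is bounded by an $\eta$‑integrable function and $\nu^n_t(\mathcal X)\le C$ $\eta$‑a.e., dominated convergence gives $\int_\Theta\kappa^n_L(t)\,\nu^n_t(\mathcal X_L)\eta(dt)\le C\int_\Theta\kappa^n_L(t)\eta(dt)\to0$. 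Hence $\limsup_n|I_n|\le 2c\epsilon$, and letting $\epsilon\downarrow0$ gives $I_n\to0$.

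The delicate step, and the reason the lemma assumes $\bar\tau_p$‑convergence of the measures rather than merely the uniform bound $\sup_n\int_{\mathcal X}(1+d_{\mathcal X}^p)\nu^n_t\le C$, is the construction of the compact set $\mathcal X_L$ carrying uniformly in $n$ essentially all of the mass and of the $p$‑th moment: this requires genuine uniform integrability of the $p$‑th moment (Proposition \ref{carac p convergence}) together with tightness (Prokhorov), neither of which follows from a uniform moment bound alone. The remaining ingredients — the splitting, the measurability and growth bookkeeping, and the uniform‑continuity argument on $\mathcal X_L\times\mathcal K$ — are routine and follow the pattern of Lemma A.3 in \cite{lacker2015} and Appendix F of \cite{dlt2021}.
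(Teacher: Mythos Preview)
Your proof is correct and follows exactly the route the paper intends: the paper does not write out a proof of this lemma but refers to Appendix~F of \cite{dlt2021} and (implicitly) Lemma~A.3 of \cite{lacker2015}, which is precisely the pattern you reproduce, with the $p$-growth adaptations (Proposition~\ref{carac p convergence} and the $M_{mc,p}$ test class) supplied where needed. The decomposition $I_n+II_n$, the identification of $\phi(t,x)=\varphi(t,x,\psi(t))$ as an element of $M_{mc,p}(\Theta\times\mathcal X)$, and the compact-set / uniform-continuity / dominated-convergence treatment of $I_n$ are exactly the standard argument.
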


\printbibliography

\end{document}